\newtheorem{theorem}{Theorem}[section]
\newtheorem{corollary}[theorem]{Corollary}
\newtheorem{proposition}[theorem]{Proposition}
\newtheorem{lemma}[theorem]{Lemma}
\newtheorem{example}[theorem]{Example}
\newtheorem{conjecture}[theorem]{Conjecture}
\newtheorem{question}[theorem]{Question}
\theoremstyle{definition}
\newtheorem{definition}[theorem]{Definition}
\theoremstyle{remark}
\newtheorem{remark}[theorem]{Remark}
\newcommand{\St}{{S}}
\newcommand{\invtPoly}{\mathcal{P}}
\DeclareMathOperator{\im}{im}
\newcommand{\CRN}{chemical reaction network }
\newcommand{\R}{\mathbb{R}}
\newcommand{\Z}{\mathbb{Z}}
\newcommand{\Rnn}{\mathbb{R}_{\geq 0}}
\newcommand{\bd}{box diagram}
\def\lra{\leftrightarrows}
\def\rla{\rightleftarrows}
\def\been{\begin{enumerate}}
\def\enen{\end{enumerate}}
\def\SS{\mathcal S}
\def\CC{\mathcal C}
\def\RR{\mathcal R}
\def\wt{\widetilde}
\def\ep{\epsilon}
\def\caseOne{if $-1 <\alpha <0$}
\def\caseTwo{if $\alpha <-1$}
\def\ds{\displaystyle}
\def\la{\leftarrow}
\newcommand{\specialcell}[2][c]{\begin{tabular}[#1]{@{}l@{}}#2\end{tabular}}
\DeclareMathOperator{\capPSS}{cap_{pos}} 
\DeclareMathOperator{\capNPSS}{cap_{nondeg}}
\DeclareMathOperator{\capStable}{cap_{exp-stab}}
\newcommand{\lradot}{%
  \mathrel{\ooalign{\hfil$\vcenter{
   \hbox{$\mkern3mu\scriptscriptstyle\bullet$}}$\hfil\cr$\longleftrightarrow$\cr}
  }%
}
\begin{document}

\title{Which small reaction networks are multistationary?}

\date{\today}

\author{Badal Joshi and Anne Shiu}

\maketitle




\abstract{Reaction networks taken with mass-action kinetics arise in many settings, from epidemiology 
to population biology 
to systems of chemical reactions.    Bistable reaction networks are posited to underlie biochemical switches, which motivates the following question: which reaction networks have the capacity for multiple steady states?  
Mathematically, this asks: among certain parametrized families of polynomial systems, which admit multiple positive roots?  No complete answer is known. 
This work analyzes the smallest networks, those with only a few chemical species or reactions.
For these ``smallest'' networks, we completely answer the question of multistationarity and, in some cases, multistability too, thereby extending related work of Boros.  
Our results highlight the role played by the Newton polytope of a network (the convex hull of the reactant vectors).  
Also, our work is motivated by recent results that explain how 
a given network's capacity for multistationarity  
arises from that of certain related networks which are typically smaller.
Hence, we are interested in classifying small multistationary networks, and our work forms a first step in this direction.
}

\vskip .1in

\noindent {\bf Keywords:} chemical reaction network, mass-action kinetics, multiple steady states, bistability, deficiency, injectivity, Descartes' rule of signs, Newton polytope

\vskip .1in
\noindent {\bf Mathematics Subject Classification:} 
37C10, 
37C25, 
12D10, 
34A34, 
65H04, 
80A30 

\maketitle

\section{Introduction}
Many dynamical systems 
that arise in applications 
admit multiple steady states, and yet the following question is open:  
\begin{question} \label{q:main}
When taken with mass-action kinetics, 
which reaction networks have the capacity for two or more steady states?  
\end{question}
\noindent
Mathematically, Question~\ref{q:main} asks: among certain parametrized families of polynomial systems, which of them admit multiple positive solutions?  
 This question remains open, even as related questions have been resolved.  
For instance, Banaji and Pantea proved that, when taken with certain {\em power-law} kinetics, a network that contains all inflows/outflows has the capacity for two or more steady states if and only if its non-flow subnetwork is ``accordant''~\cite[Corollary 4.26]{BP}.  

Question~\ref{q:main} is the first step toward classifying bistable networks, which are thought to underlie biochemical switches. 
Much prior work has analyzed bistability in specific networks arising in systems biology (e.g.,~\cite{lipo,Enzyme-sharing,case-study,Hell-2015,HoHarrington,TG2,WangSontag}), but we would also like to develop theory that applies to general families of networks.  Here we succeed in obtaining such results.

Specifically, the aim of the present work is to completely answer Question~\ref{q:main} for small networks,  those with only 1 chemical species or up to 2 chemical reactions (each of which is possibly reversible).  Moreover, deciding multistationarity for these networks will be immediate from inspecting their ``reaction diagrams'' (which depict, for instance, the reaction $A \to B$ as an arrow from $(1,0)$ to $(0,1)$).  
Here we informally summarize some of our main results:
{\em 
Let $G$ be a reaction network with exactly $r$ reactions and $s$ species. Then:
\begin{enumerate}
	\item If $s=1$, then $G$ is nondegenerately multistationary if and only if $G$ has a subnetwork with ``arrow diagram'' $(\to, \la, \to)$ 
	(e.g., $0 \to A  \leftarrow 2A,~3A \to 4A$) or $(\la, \to, \la)$. 
	\item If $r=1$, then $G$ is not multistationary. If $r=2$, then $G$ is nondegenerately multistationary if and only if for some choice of species $i$ and $j$, the projection of the reaction diagram to the $(i,j)$-plane has one of the following zigzag forms (depicted also is the rectangle whose diagonal joins the two reactants):
\begin{center}
	\begin{tikzpicture}[scale=.5]
	\path [fill=gray] (0,0) rectangle (1.5,1);
	\draw [->] (0,1) -- (.5,1.3);
	\draw [->] (1.5,0) -- (1,-.3);
	\draw (0,1) --(1.5,0); 
	\path [fill=gray] (3,0) rectangle (4.5,1);
	\draw [->] (3,1) -- (2.5,.7);
	\draw [->] (4.5,0) -- (5,.3);
	\draw (3,1) --(4.5,0); 
	\path [fill=gray] (6,0) rectangle (7.5,1);
	\draw [->] (6,0) -- (5.5,.3);
	\draw [->] (7.5,1) -- (8, .7);
	\draw (6,0) --(7.5,1); 
	\path [fill=gray] (9,0) rectangle (10.5,1);
	\draw [->] (9,0) -- (9.5, -.3);
	\draw [->] (10.5,1) -- (10,1.3);
	\draw (9,0) --(10.5,1);
	\end{tikzpicture}
\end{center}
and, if only one such pair $(i,j)$ exists, then the slope of the marked diagonal is {\em not} $-1$.
\end{enumerate}
}
\noindent
For a precise statement, 
see Theorems~\ref{thm:s=1-r=1} and~\ref{thm:speculation}.  Also, see Definition \ref{def:arrow-diagram} for ``arrow diagram".

In addition to taking a step toward answering Question~\ref{q:main}, our goal of cataloging small multistationary networks is motivated by recent results on how a given network's capacity for multistationarity arises from that of certain smaller networks~\cite{BP-inher,joshi2012atoms}.
Here is one such ``lifting'' result, stated informally:
if $N$ is a subnetwork of $G$ 
and both networks contain all possible inflow and outflow reactions, then if $N$ is multistationary then $G$ is too.  
Therefore, we would like a catalogue of small multistationary networks against which the networks $N$ can be checked.  

We began such a catalogue for those networks containing all inflows and outflows in~\cite{joshi2012atoms}, and 
then for general networks in~\cite{mss-review}.
The present work adds to this second list; our contribution is summarized in Theorem~\ref{thm:list-new-atoms}.  
Specifically, we find new {\em embedding-minimal} multistationary networks. ({\em Embedded networks} are obtained by removing species, as we saw earlier, and/or reactions; 
see 
Section~\ref{sec:atoms}). 

Although we are the first to propose a catalogue of small multistationary networks, there are some related analyses of small networks. 
Horn initiated the investigation of small chemical reaction networks by enumerating networks 
that consist of a few ``short complexes'' (at-most-bimolecular) \cite{HornCGph,Horn73}. 
Networks that consist of at most three short complexes do not admit multiple steady states.
Deckard~{\em et al.}\ enumerated networks (with certain restrictions) with few reactions and species~\cite{Deckard}.  Sampling from this enumeration, Pantea and Craciun~\cite{Pantea_comput} estimated the percentage of small networks (containing all inflows/outflows) for which the injectivity criterion of Craciun and Feinberg~\cite{ME1} precludes multistationarity.
Wilhelm and Heinrich found and analyzed the smallest networks admitting bistability or Hopf bifurcations~\cite{ThomasSmallest,smallestHopf,smallestHopf2}; see also~\cite{Smith}.
In earlier work, the present authors analyzed the smallest multistationary networks that are mass-preserving~\cite{Smallest}, and also those that contain all inflows/outflows plus one or two non-flow reactions involving at-most-bimolecular chemical complexes~\cite{joshi2013complete,joshi2012atoms}.  

Another related work is a recent paper of Boros~\cite{revisiting}, which our work uses and extends.  
Boros used Feinberg's deficiency one algorithm to classify small multistationary networks, more specifically, certain `regular' networks with only one species or up to two reactions (possibly reversible and allowing flow reactions as well).  
Boros considered only regular networks
because the deficiency one algorithm applies to those ones only.
Our work analyzes many of the same networks, {\em without} requiring them to be regular.  Moreover, like Boros, we elucidate the precise conditions for multistationarity, but also go beyond, to ascertain nondegenerate multistationarity and bistability as well.
For instance, we will see that networks with only two reactions and two species may admit up to two nondegenerate steady states, but only one is stable (Theorem~\ref{thm:s=2-r=2-alternate}).  Additionally, compared to Boros's, our criteria for multistationarity are simpler, and we also interpret them geometrically via reaction diagrams (as in the above theorems).

In addition to applying Boros's results, 
we also rely on existing criteria from chemical reaction network theory~\cite{FeinOsc,HornJackson72} (specifically, deficiency theory) that can preclude or guarantee
multistationarity for certain classes of networks.  Indeed, these criteria can be viewed as the first partial answers to Question~\ref{q:main}.  For a survey,
see~\cite{mss-review}. 

Besides using standard criteria from chemical reaction network theory, we also employ concepts (e.g., Newton polytopes) and basic results (e.g., Descartes' rule of signs) from real algebraic geometry.  This is unsurprising: Question~\ref{q:main} and related problems are algebraic in nature~\cite{invitation,GW}, and criteria for real root counting are in terms of signs~\cite{BPR}.  
Accordingly, criteria for deciding multistationarity based on sign conditions have a long history 
\cite{BP,FeinDefZeroOne,signs}.

In particular, we highlight the role of Newton polytopes and their geometric relationships to the reaction vectors of a network.   The Newton polytope (or ``reactant polytope'') we are interested in is the convex hull of the reactant (source) vectors.  For instance, for the network $\{A+2B \to 3B, ~ 3A+B \to 4A \}$, the polytope is the marked diagonal of the rectangle below:
\begin{center}
	\begin{tikzpicture}[scale=.61]
	\draw (-1,0) -- (5,0);
	\draw (0,-.5) -- (0,3.5);
	\draw [->] (1,2) -- (0.07,3);
	\draw [->] (3,1) -- (4,0.07);
	\path [fill=gray] (1,1) rectangle (3,2);
	\draw (1,2) --(3,1); 
    \node [left] at (1,2) {$A+2B$};
    \node [left] at (0,3) {$3B$};
    \node [right] at (3,1) {$3A+B$};
    \node [below] at (4,0) {$4A$};
	\end{tikzpicture}
\end{center}
As an example, the above reaction diagram does not match any of the four forms depicted earlier, 
so we immediately conclude that the network is {\em not} multistationary.

For readers already familiar with real algebraic geometry, it is no surprise that these geometric objects appear in our analyses.  Additionally, for someone familiar with the ideas in Craciun's recent proof of (and others' prior partial results toward) the so-called global attractor conjecture~\cite{Anderson,GAC,CNP,GMS2,Pantea}, it is well known that analyzing the Netwon polytope of a network elucidates some aspects of the long-term dynamics and can be used to 
determine whether the network always admits {\em at least one} positive steady state.  What is new here is our use of the geometric objects to determine whether a network admits {\em more than one} positive steady state.

Our work is organized as follows.
Section~\ref{sec:background} introduces chemical reaction networks and the dynamical systems they define. 
Section~\ref{sec:preliminary-results} completes the simplest cases: when a network has only one species or only one reaction.
There we also show the following:
letting $r$ denote the number of reactions and $s$ the number of species,
 nondegenerately multistationary networks always satisfy the inequality $r+s \geq 4$.
Section~\ref{sec:main-theory} treats the next case: networks with two reactions and two species, and then Section~\ref{sec:two-rxn-net} extends these results to arbitrary networks with two reactions, each of which is possibly reversible.
Finally, a discussion appears in Section~\ref{sec:openQ}.

\section{Background} \label{sec:background}
In this section we recall how a chemical reaction network gives rise to a dynamical system.  

We begin with an example of a {\em chemical reaction}: $A+B \to 3A + C$.  
%
In this reaction, one unit of chemical {\em species} $A$ and one of $B$ react 
to form three units of $A$ and one of $C$.  
The  {\em reactant} $A+B$ and the {\em product} $3A+C$ are called {\em complexes}. 
The concentrations of the three species, denoted by $x_{A},$ $x_{B}$, and
$x_{C}$, change in time as the reaction occurs.  Under the assumption of {\em
mass-action kinetics}, species $A$ and $B$ react at a rate proportional to the
product of their concentrations, where the proportionality constant is the {\em reaction rate
constant} $\kappa$.  Noting that the reaction yields a net change of two units in
the amount of $A$, we obtain the differential equation $\frac{d}{dt}x_{A}=2\kappa x_{A}x_{B}$, and the other two arise similarly:
$\frac{d}{dt}x_{B} =-\kappa x_{A}x_{B}$ and 
$\frac{d}{dt} x_{C}=\kappa x_{A}x_{B}$.
 A {\em chemical reaction network}
consists of finitely many reactions (Definition~\ref{def:crn}).  
The mass-action differential equations that a network defines consist of a 
sum of the monomial contributions from the reactants of each 
chemical reaction in the network (see~(\ref{eq:ODE-mass-action})).


\subsection{Chemical reaction systems}
We now provide precise definitions.  

\begin{definition} \label{def:crn}
A {\em chemical reaction network} $G=(\SS,\CC,\RR)$
consists of three finite sets:
\begin{enumerate}
\item a set of chemical {\em species} $\SS = \{A_1,A_2,\dots, A_s\}$, 
\item a set  $\CC = \{y_1, y_2, \dots, y_p\}$ of {\em complexes} (finite nonnegative-integer combinations of the species), and 
\item a set of {\em reactions}, which are ordered pairs of the complexes, excluding the diagonal pairs: $\RR \subseteq (\CC \times \CC) \setminus \{ (y,y) \mid y \in \CC\}$.
\end{enumerate}
A {\em subnetwork} of a network $G = (\SS, \CC, \RR)$ is a network $G' = (\SS', \CC', \RR')$ with $\RR' \subseteq \RR$.
\end{definition}

\noindent
In this work, $p$, $s$, and $r$ denote the numbers of
complexes, species, and reactions, respectively.  

We may view a network as a directed graph: the nodes are complexes, and the edges correspond to reactions. A {\em linkage class} is a connected component of the directed graph: complexes $y$ and $y'$ are in the same linkage class if there is a sequence of complexes $(y_0:=y,~y_1, \ldots,~y_n:=y')$ such that $y_i \to y_{i+1}$ or $y_{i+1} \to y_{i}$ is a reaction for all $0 \le i \le n-1$. 
A network is \emph{weakly reversible} if every connected component 
is strongly connected, i.e., every reaction is part of a directed cycle of reactions.  A reaction $y_i \to y_j$ is {\em reversible} if its reverse reaction $y_j \to y_i$ is also in $\RR$; such a pair is depicted by $y_i \rightleftharpoons y_j$.

Writing the $i$-th complex as $y_{i1} A_1 + y_{i2} A_2 + \cdots + y_{is}A_s$ (where $y_{ij} \in \mathbb{Z}_{\geq 0}$ for $j=1,2,\dots,s$), 
we introduce the following monomial:
$$ x^{y_i} \,\,\, := \,\,\, x_1^{y_{i1}} x_2^{y_{i2}} \cdots  x_s^{y_{is}}~. $$
For example, the two complexes in the reaction $A+B \to 3A + C$ considered earlier give rise to 
the monomials $x_{A}x_{B}$ and $x^3_A x_C$, which determine the vectors 
$y_1=(1,1,0)$ and $y_2=(3,0,1)$.  
These vectors define the rows of a $p \times s$-matrix of nonnegative integers,
which we denote by $Y=(y_{ij})$.
Next, the unknowns $x_1,x_2,\ldots,x_s$ represent the
concentrations of the $s$ species in the network,
and we regard them as functions $x_i(t)$ of time $t$.

For a reaction $y_i \to y_j$ from the $i$-th complex to the $j$-th
complex, the {\em reaction vector}
 $y_j-y_i$ encodes the
net change in each species that results when the reaction takes
place.  The {\em stoichiometric matrix} 
$\Gamma$ is the $s \times r$ matrix whose $k$-th column 
is the reaction vector of the $k$-th reaction 
i.e., it is the vector $y_j - y_i$ if $k$ indexes the 
reaction $y_i \to y_j$.

We associate to each reaction 
a positive parameter $\kappa_{ij}$, the rate constant of the
reaction.  In this article, we will treat the rate constants $\kappa_{ij}$ as positive
unknowns in order to analyze the entire family of dynamical systems
that arise from a given network as the $\kappa_{ij}$'s vary.  


The choice of kinetics is encoded by a locally Lipschitz function $R:\Rnn^s \to \R^r$ that encodes the reaction rates of the $r$ reactions as functions of the $s$ species concentrations.
The {\em reaction kinetics system} 
defined by a reaction network $G$ and reaction rate function $R$ is given by the following system of ODEs:
\begin{align} \label{eq:ODE}
\frac{dx}{dt} ~ = ~ \Gamma \cdot R(x)~.
\end{align}
For {\em mass-action kinetics}, which is the setting of this paper, the coordinates of $R$ are
$ R_k(x)=  \kappa_{ij} x^{y_i}$, 
 if $k$ indexes the reaction $y_i \to y_j$.  
A {\em chemical reaction system} refers to the 
dynamical system (\ref{eq:ODE}) arising from a specific chemical reaction
network $(\SS, \CC, \RR)$ and a choice of rate parameters $(\kappa^*_{ij}) \in
\mathbb{R}^{r}_{>0}$ (recall that $r$ denotes the number of
reactions) where the reaction rate function $R$ is that of mass-action
kinetics.  Specifically, the mass-action ODEs are the following:
\begin{align} \label{eq:ODE-mass-action}
\frac{dx}{dt} \quad = \quad \sum_{ y_i \to y_j~ {\rm is~in~} \RR} \kappa_{ij} x^{y_i}(y_j - y_i) \quad =: \quad f_{\kappa}(x)~.
\end{align}

The {\em stoichiometric subspace} is the vector subspace of
$\mathbb{R}^s$ spanned by the reaction vectors
$y_j-y_i$, and we will denote this
space by $\St$: 
\begin{equation} \label{eq:stoic_subs}
  \St~:=~ {\rm span} \left( \{ y_j-y_i \mid  y_i \to y_j~ {\rm is~in~} \RR \} \right)~.
\end{equation}
Note that $\St = \im(\Gamma)$, where $\Gamma$ is the stoichiometric matrix.
For the network consisting of the single reaction $A+B \to 3A + C$, we have $y_2-y_1 =(2,-1,1)$,  which spans $\St$ for this network. 

Note that the  vector $\frac{d x}{dt}$ in  (\ref{eq:ODE}) lies in
$\St$ for all time $t$.   
In fact, a trajectory $x(t)$ beginning at a positive vector $x(0)=x^0 \in
\R^s_{>0}$ remains in the following {\em stoichiometric compatibility class}:
\begin{align}\label{eqn:invtPoly}
\invtPoly~:=~(x^0+\St) \cap \mathbb{R}^s_{\geq 0}~
\end{align}
for all positive time.  In other words, $\invtPoly$ is forward-invariant with
respect to the dynamics~(\ref{eq:ODE}).    

\subsection{Steady states} \label{subsec:steady-states}

A {\em steady state} of a reaction kinetics system is a nonnegative concentration vector $x^* \in \Rnn^s$ at which the ODEs~\eqref{eq:ODE}  vanish: $f_{\kappa} (x^*) = 0$.  
A steady state $x^*$ is {\em nondegenerate} if ${\rm Im}\left( df_{\kappa} (x^*)|_{S} \right) = \St$. (Here, $df_{\kappa}(x^*)$ is the Jacobian matrix of $f_{\kappa}$ at $x^*$.)  
A nondegenerate steady state is 
{\em exponentially stable} if each of the $\sigma:= \dim(\St)$ nonzero eigenvalues of $df_{\kappa}(x^*)$ has negative real part. 
This work focuses on counting {\em positive steady states} $x ^* \in \mathbb{R}^s_{> 0}$.

\begin{definition} \label{def:mss}
\begin{enumerate}
\item  A reaction kinetics system~\eqref{eq:ODE} is {\em multistationary} (or {\em admits multiple positive steady states}) if there exists a stoichiometric compatibility class~\eqref{eqn:invtPoly} 
with two or more positive steady states. 
Similarly, a reaction kinetics system is {\em nondegenerately multistationary} (respectively, {\em multistable}) if it admits two or more nondegenerate (respectively, exponentially stable) positive steady states in some stoichiometric compatibility class~\eqref{eqn:invtPoly}.  

\item Under mass-action kinetics~\eqref{eq:ODE-mass-action}, a network may admit multiple positive steady states for all, some, or no choices of
positive rate constants $\kappa_{ij}$; if such rate constants exist, then the network itself 
{\em has the capacity for multistationarity} or, for short, 
is {\em multistationary}.  Analogously, a network can be {\em nondegenerately multistationary} or {\em multistable}.

\item 
A network {\em admits $k$ positive steady states} if there exists a choice of positive rate constants so that the resulting mass-action system 
has exactly $k$ positive steady states in some stoichiometric compatibility class.
Similarly, a network may {\em admit $k$ nondegenerate positive steady states} or {\em admit $k$ exponentially stable positive steady states}.\footnote{We allow $k=\infty$, by which we mean infinitely many steady states of the corresponding type.}

\item 
The 
{\em maximum number of positive steady states} of a network $G$ is 
$$\capPSS(G) ~:=~ \max \{k \in\mathbb{Z}_{\geq 0} \cup \infty \mid G {\rm~admits~} k {\rm~positive~steady~states} \} ~.$$
The {\em maximum number of nondegenerate positive steady states} of a network $G$ is 
$$ \capNPSS (G) ~:=~ \max \{k \in\mathbb{Z}_{\geq 0} \cup \infty \mid G {\rm~admits~} k {\rm~nondegenerate~positive~steady~states} \} ~,$$
which we also call the the {\em capacity} of $G$. 
Analogously, $G$ has 
a {\em maximum number of exponentially stable positive steady states}, denoted by 
$\capStable(G)$. 
\end{enumerate}

\end{definition}

\noindent
Note that a network $G$ is multistationary (respectively, nondegenerately multistationary or multistable) if and only if $\capPSS(G)>1$ (repectively, $\capNPSS(G)>1$ or $\capStable(G)>1$ ).  
Stable steady states are nondegenerate, so the following inequalities hold for all networks $G$:
	\begin{align*} 
\capPSS(G)  \ge \capNPSS(G)  \geq \capStable(G)~.
	\end{align*}
It is natural to ask under what hypotheses do these inequalities become equality: 

\begin{conjecture}[Nondegeneracy conjecture] \label{conj:equality}
If $\capPSS(G) < \infty$, then  $\capPSS(G) =  \capNPSS(G)$.
\end{conjecture}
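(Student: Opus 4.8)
## Proof Proposal for the Nondegeneracy Conjecture

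\noindent\textbf{A proof strategy.} The plan is to translate the conjecture into a statement about a parametrized family of real polynomial systems and then argue by perturbation. Fix a network $G$ with stoichiometric subspace $\St$ of dimension $\sigma$, and choose a full-rank matrix $W$ whose rows span $\St^{\perp}$, so that the positive steady states lying in the stoichiometric compatibility class through $x^0$ are exactly the solutions $x\in\mathbb{R}^s_{>0}$ of the combined system $F_{\kappa,c}(x) := \bigl( f_\kappa(x),\, Wx - c \bigr) = 0$, where $c := Wx^0 \in \mathbb{R}^{s-\sigma}$ and $f_\kappa$ is viewed as valued in $\St \cong \mathbb{R}^{\sigma}$. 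In this language: $\capPSS(G)<\infty$ says precisely that no $(\kappa,c)$ yields infinitely many positive solutions; a steady state $x^*$ is nondegenerate exactly when the Jacobian $D_xF_{\kappa,c}(x^*)\colon\mathbb{R}^s\to\mathbb{R}^s$ is invertible; and $\capPSS(G)=\capNPSS(G)$ becomes the assertion that if some $(\kappa^*,c^*)$ has $k$ positive steady states, then some (possibly different) parameter pair has $k$ \emph{nondegenerate} ones.

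First I would establish generic nondegeneracy. The incidence set $\mathcal V := \{(\kappa,c,x)\in\mathbb{R}^r_{>0}\times\mathbb{R}^{s-\sigma}\times\mathbb{R}^s_{>0} : F_{\kappa,c}(x)=0\}$ is a smooth manifold of dimension $r+s-\sigma$, because $F$ is a submersion in the joint variables: the partials $\partial f_\kappa/\partial\kappa_{ij} = x^{y_i}(y_j-y_i)$ already span $\St$ whenever $x>0$, and $\partial(Wx-c)/\partial c=-I$ covers the remaining directions. Since $r+s-\sigma$ is also the dimension of the parameter space $N$, the projection $\pi\colon\mathcal V\to N$ is a map between equidimensional manifolds, and it is finite-to-one by the hypothesis $\capPSS(G)<\infty$. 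A direct computation identifies the regular values of $\pi$ with exactly those parameter pairs all of whose positive steady states are nondegenerate, so Sard's theorem yields a dense open set of such parameters; in particular $\capNPSS(G)$ is at least the number of positive steady states at any regular value. What remains is to show that the maximum $k=\capPSS(G)$, which a priori could be attained only at non-regular parameters, is nonetheless attained by nondegenerate steady states somewhere.

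For this I would perturb. Fix $(\kappa^*,c^*)$ with positive steady states $x^{(1)},\dots,x^{(k)}$ and enclose them in pairwise-disjoint balls $B_i\subset\mathbb{R}^s_{>0}$ on whose boundaries $F_{\kappa^*,c^*}$ does not vanish. Every nondegenerate $x^{(i)}$ persists, and stays nondegenerate, under a small change of $(\kappa,c)$, by the implicit function theorem. For a degenerate $x^{(i)}$: if its multiplicity as an isolated zero of $F_{\kappa^*,c^*}$ is odd, then a perturbed system still has a real zero in $B_i$ (non-real zeros split off in conjugate pairs), and, $B_i$ being a small ball around a positive point, that zero is positive; one further generic nudge makes every zero in $B_i$ nondegenerate. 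If the multiplicity is even, one instead uses that, by finiteness, the part of $\mathcal V$ lying over $B_i$ is a manifold of dimension $\dim N$ mapping finite-to-one to $N$, so the set $A_i := \{(\kappa,c)\text{ near }(\kappa^*,c^*) : F_{\kappa,c}\text{ has a zero in }B_i\}$ still has nonempty interior accumulating at $(\kappa^*,c^*)$; choosing the perturbation inside $\mathrm{int}(A_i)$ and at a regular value recovers at least one nondegenerate positive steady state near $x^{(i)}$. Carried out for a single degenerate steady state, this already proves the conjecture.

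The hard part will be the \emph{simultaneous} recovery when several steady states are degenerate with even multiplicity: one needs a single perturbation lying in $\bigcap_i\mathrm{int}(A_i)$ arbitrarily close to $(\kappa^*,c^*)$, whereas each $A_i$ is only known to be a closed semialgebraic set with nonempty interior through $(\kappa^*,c^*)$, and such sets can meet only in a lower-dimensional set (think of $\{x\ge 0\}$ and $\{x\le 0\}$ in $\mathbb{R}$). Ruling this out must use $\capPSS(G)<\infty$ more deeply, controlling how the discriminant locus and the ``boundary-escape'' locus (where a steady-state coordinate tends to $0$ or $\infty$) sit jointly near $(\kappa^*,c^*)$. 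I expect the resolution to need either a Morse-theoretic bookkeeping of the steady-state count along a carefully chosen one-parameter family of parameters, or a structural feature special to mass-action systems (for instance, monotonicity of the dependence on the rate constants), rather than soft topology alone — which is presumably why the statement is posed as a conjecture.
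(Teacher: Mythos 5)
First, a point of orientation: the statement you were asked to prove is posed in the paper as an open problem (the ``Nondegeneracy conjecture''), and the authors state explicitly that their results are merely \emph{consistent} with it and that they cannot prove it in general; there is no proof in the paper to compare against. So the only question is whether your argument closes the gap on its own, and it does not --- as you yourself acknowledge in your final paragraph. The genuine obstruction is exactly where you place it: after reducing to the parametrized system $F_{\kappa,c}(x)=(f_\kappa(x),Wx-c)$ and isolating the steady states $x^{(1)},\dots,x^{(k)}$ of a maximizing parameter $(\kappa^*,c^*)$ in disjoint balls $B_i$, you need a single perturbation lying in $\bigcap_i \mathrm{int}(A_i)$ and arbitrarily close to $(\kappa^*,c^*)$, where $A_i$ is the set of parameters whose system has a zero in $B_i$. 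Each $A_i$ is a full-dimensional semialgebraic set containing $(\kappa^*,c^*)$, but nothing in the argument prevents the interiors of the several $A_i$ from approaching $(\kappa^*,c^*)$ from ``disjoint sides'' (your own $\{x\ge 0\}\cap\{x\le 0\}$ example), in which case no common nearby regular value exists and a degenerate steady state of even local degree may be lost whenever another is saved. Ruling this out requires structural input about mass-action systems beyond $\capPSS(G)<\infty$, and none is supplied.

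There are also two smaller soft spots in the parts you treat as routine. (i) The odd-multiplicity case implicitly assumes each $x^{(i)}$ is an isolated zero of the \emph{complexified} system, so that a local intersection multiplicity is defined and non-real zeros split off in conjugate pairs under real perturbation; but $\capPSS(G)<\infty$ only guarantees finitely many \emph{positive real} solutions, and a zero that is isolated in $\mathbb{R}^s_{>0}$ can still lie on a positive-dimensional complex component (or on a real component exiting the positive orthant), in which case neither the multiplicity nor the conjugate-pair count is available as stated. (ii) Sard's theorem gives that the regular values of $\pi:\mathcal V\to N$ have full measure, hence are dense, but not that they form an open set; openness would need properness of $\pi$, which fails here because steady states can escape to the boundary of the orthant or to infinity. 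Neither of these sinks the overall strategy, but both would need repair; the simultaneous-recovery step is the one that, as the paper's own phrasing suggests, constitutes the real content of the conjecture.
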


The results in this work are consistent with Conjecture~\ref{conj:equality}, but we can not prove it in general.  The following example demonstrates why the $\capPSS(G) < \infty$ assumption is necessary.

\begin{example}[Infinitely many degenerate steady states]  \label{ex:cap} 
Let $G$ denote the simple network $0 \leftarrow A \to 2A$. 
If the two rate constants are equal, then every positive value $x_A>0$ is a degenerate steady state  (so, $\capPSS(G)=\infty$), whereas when the two rate constants are not equal, then the resulting system admits no positive steady states (so, $\capNPSS(G) = 0$).  

For a more interesting example, consider the following network: 
\begin{align*}
	G'~=~\{	A+B \to 0~, \quad \quad 2A \to 3A+B\}~.
\end{align*}
The mass-action ODEs are:
\begin{align*}
\frac{dx_A}{dt} ~=~ \frac{dx_B}{dt}~=~ -k x_Ax_B + lx_A^2~,
\end{align*}
so the compatibility classes are $\invtPoly=\{(a,a+T) \in \mathbb{R}^2_{\geq 0} \}$, where $T \in \mathbb{R}$.  Thus, the positive steady states come from the positive roots of $0=-k(x_A+T)+lx_A=(-k+l)x_A-kT$.  So, if $k \neq l$, there is at most 1 positive steady state (arising from $x_A^*=kT/(l-k)$, if $l >k$).  On the other hand, if $k=l$, then no positive steady states exist if $T \neq 0$, whereas the entire  compatibility class consists of steady states if $T =0$.  Thus,  $G'$ admits infinitely many degenerate positive steady states (so, $\capPSS(G')=\infty$), but at most 1 nondegenerate one (so, $\capNPSS(G') = 1$).  We will recover this result later (part 3b of Theorem~\ref{thm:s=2-r=2-alternate}); see Example~\ref{ex:-1-slope}.
\end{example}

Now we can state a shorter version of Question~\ref{q:main}: {\em which reaction networks are multistationary? }
As mentioned in the introduction, this question is difficult in general: assessing whether a given network is multistationary means determining if the parametrized family of polynomial systems arising from the mass-action ODEs~\eqref{eq:ODE} ever admits two or more positive solutions.  

We also pose the following related questions, because in applications we are most interested in the steady states that we can observe:
\begin{question} \label{q:main-nondeg}
Which reaction networks are nondegenerately multistationary?
\end{question}
\begin{question} \label{q:main-multistable}
Which reaction networks are multistable?
\end{question}
\noindent
Other authors have posed such questions (e.g.,~\cite{BP,FeinOsc}).  Nevertheless, here we give the first complete answer for the smallest networks: those with one reaction or one species (Section~\ref{sec:preliminary-results}), two reactions and two species (Section~\ref{sec:main-theory}), and two (possibly reversible) reactions (Section~\ref{sec:two-rxn-net}).

Now we recall a well-known fact\footnote{For instance, see~\cite[Remark 2.1]{Fein95DefOne} or \cite[Lemma~D.2]{BP}.}: the networks that admit positive steady states 
are the consistent ones. A network is {\em consistent} if its reaction vectors are positively linearly dependent. 

\begin{lemma}  \label{lem:consistent}
Let $G$ be a network.  Then 
$\capPSS(G) \ge 1$ if and only if $G$ is consistent. 
\end{lemma}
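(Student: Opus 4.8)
The plan is to prove both directions via the structure of the mass-action steady state equations, using the fact that a positive vector $x^*$ is a steady state of~\eqref{eq:ODE-mass-action} if and only if the vector of monomial values $\bigl(\kappa_{ij}(x^*)^{y_i}\bigr)$, ranging over reactions $y_i \to y_j$, lies in the kernel of the stoichiometric matrix $\Gamma$. Since $x^*$ is positive and all $\kappa_{ij}$ are positive, each such monomial value is a positive real number; so the existence of a positive steady state is equivalent to the existence of a strictly positive vector $v \in \R^r_{>0}$ with $\Gamma v = 0$, i.e., a strictly positive linear dependence among the columns of $\Gamma$ (the reaction vectors). That last condition is exactly consistency.

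For the ``only if'' direction, I would start from a positive steady state $x^*$ of some mass-action system arising from $G$ and set $v_k := \kappa_{ij}(x^*)^{y_i} > 0$ for the $k$-th reaction $y_i \to y_j$; then~\eqref{eq:ODE-mass-action} evaluated at $x^*$ reads $\Gamma v = 0$, exhibiting a positive dependence among the reaction vectors, so $G$ is consistent.

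For the ``if'' direction, suppose $G$ is consistent, so there is $v \in \R^r_{>0}$ with $\Gamma v = 0$. I would then choose rate constants to realize $v$ at a convenient point, e.g., at $x^* = (1,1,\dots,1)$: for the $k$-th reaction $y_i \to y_j$, set $\kappa_{ij} := v_k$, so that $\kappa_{ij}(x^*)^{y_i} = v_k$ and $f_\kappa(x^*) = \Gamma v = 0$. Since $x^* \in \R^s_{>0}$, the network admits a positive steady state, hence $\capPSS(G) \ge 1$. The only mild subtlety — and the one place to be careful rather than an obstacle — is bookkeeping the correspondence between columns of $\Gamma$, reactions, and rate constants, and noting that the construction is legitimate precisely because positivity of $v$ matches the required positivity of the $\kappa_{ij}$; there is no real obstacle here, as both implications are essentially a restatement of the definitions once the monomial substitution is made.
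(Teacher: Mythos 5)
Your proof is correct: the observation that a positive steady state is equivalent to a strictly positive vector in $\ker(\Gamma)$, realized in one direction by evaluating the monomials and in the other by setting $x^*=(1,\dots,1)$ and $\kappa_{ij}:=v_k$, is exactly the standard argument. The paper does not prove this lemma itself (it cites it as well known, with references), and your write-up matches the argument in those sources.
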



Lemma~\ref{lem:consistent} implies that if a 2-reaction network admits positive steady states, then its reaction vectors are negative scalar multiples of each other (we will state this in Lemma~\ref{lem:2-rxn-dep}).  
For instance, $\{A+D \to B+D, ~ 2A+D \to C+D \}$ 
from~\cite[Example 10]{BP} 
is not multistationary.

One important subclass of consistent networks is formed by the weakly reversible networks.  In fact, Deng~\textit{et al.}\ showed that weakly reversible networks admit positive steady states (in every stoichiometric compatibility class, for all choices of rate constants)~\cite{deng}.

\subsection{The deficiency of a \CRN} \label{subsec:deficiency}
The  {\em deficiency} $\delta$ of a reaction network is an important invariant.   
To define it, recall that $p$ denotes the number
of complexes of a network, and let $l$ denote 
the number of linkage classes (connected components). For most networks in this work, each linkage class contains a unique {\em terminal strong linkage class}, i.e.~a maximal strongly connected subgraph in which there are no reactions from a complex in the subgraph to a complex outside the subgraph. 
In this case, Feinberg showed that the deficiency of the network can be computed in the following way: 
\begin{align} \label{def-formula}
 \delta~:=~p-l-\dim(\St)~,
\end{align}
where $\St$ denotes the stoichiometric subspace~\eqref{eq:stoic_subs}.  
The deficiency of a reaction network is nonnegative
because it can be interpreted as the dimension of a
certain linear subspace \cite{Feinberg72}.  

The foremost results in deficiency theory are the following, which are due to Feinberg~\cite{FeinDefZeroOne}: 
\begin{lemma}[Deficiency~zero theorem] \label{lem:thm:def-0}
Deficiency-zero networks are not multistationary:
\begin{enumerate}
\item Every zero-deficiency network that is weakly reversible admits a unique positive steady state in every stoichiometric compatibility class (for any choice of rate constants), and this steady state is locally asymptotically stable.
\item Every zero-deficiency network that is {\em not} weakly reversible admits {\em no} positive steady states (for any choice of rate constants).
\end{enumerate} 
\end{lemma}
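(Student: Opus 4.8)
The plan is to recall the classical proof of the deficiency zero theorem via complex balancing (Horn--Jackson and Feinberg), since I do not see a shorter route. First I would write the mass-action vector field in product form: let $Y$ be the $s\times p$ matrix whose $i$-th column is the complex $y_i$, let $\Psi\colon\R^s_{>0}\to\R^p_{>0}$ be the monomial map $\Psi(x)_i=x^{y_i}$, and let $A_\kappa$ be the $p\times p$ matrix with off-diagonal entry $(A_\kappa)_{ji}=\kappa_{ij}$ for each reaction $y_i\to y_j$ and with all column sums zero (the negative Laplacian of the weighted reaction graph); one checks directly that $f_\kappa(x)=Y\,A_\kappa\,\Psi(x)$. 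Call $x^*>0$ \emph{complex-balanced} when $A_\kappa\Psi(x^*)=0$. The key preliminary fact is linear-algebraic: $\im(A_\kappa)$ lies in the reaction-difference space $\widetilde{\St}:=\mathrm{span}\{e_j-e_i : y_i\to y_j\in\RR\}\subseteq\R^p$, and since $\dim\widetilde{\St}=p-l$ and $\St=Y(\widetilde{\St})$, formula~\eqref{def-formula} gives $\delta=\dim\bigl(\ker Y\cap\widetilde{\St}\bigr)$. Hence when $\delta=0$ we have $\ker Y\cap\im(A_\kappa)=\{0\}$, so for \emph{every} positive steady state $x^*$ the vector $A_\kappa\Psi(x^*)$ lies in $\ker Y\cap\im(A_\kappa)$ and is therefore zero; that is, every positive steady state of a deficiency-zero network is automatically complex-balanced.

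Part~(2) then follows quickly: a complex-balanced $x^*$ has $\Psi(x^*)\in\ker A_\kappa$, but $\ker A_\kappa$ is nonnegatively spanned by vectors each supported on a single terminal strong linkage class, so every element of $\ker A_\kappa$ vanishes at any complex not in a terminal strong linkage class; since $\Psi(x^*)$ is strictly positive, no such complex can exist, i.e.\ the network is weakly reversible — contradicting the hypothesis of part~(2), so no positive steady state exists. For part~(1) I would first produce one complex-balanced steady state. Taking orthogonal complements, and using $\widetilde{\St}^{\perp}=\{v\in\R^p : v\text{ is constant on each linkage class}\}$, the identity $\delta=\dim(\ker Y\cap\widetilde{\St})$ is equivalent, when $\delta=0$, to $\im(Y^{\top})+\widetilde{\St}^{\perp}=\R^p$. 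By weak reversibility and the Matrix--Tree theorem, $\ker A_\kappa$ has a basis $\rho^{1},\dots,\rho^{l}$ with $\rho^{\theta}$ strictly positive on linkage class $\theta$ and zero off it. Let $\beta\in\R^p$ be given by $\beta_i=\log\rho^{\theta}_i$ for $i$ in class $\theta$; since $\beta\in\R^p=\im(Y^{\top})+\widetilde{\St}^{\perp}$ we may write $\beta=Y^{\top}u-\gamma$ with $u\in\R^s$ and $\gamma\in\widetilde{\St}^{\perp}$, and then $x^*:=e^{u}$ (entrywise) satisfies $(x^*)^{y_i}=e^{\beta_i+\gamma_i}=c_\theta\,\rho^{\theta}_i$ for $i$ in class $\theta$, where $c_\theta:=e^{\gamma_i}>0$ is well defined because $\gamma$ is constant on $\theta$. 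Thus $\Psi(x^*)\in\ker A_\kappa$, so $x^*$ is a complex-balanced steady state.

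It remains, for part~(1), to obtain uniqueness in each compatibility class and stability, and here I would invoke the standard Horn--Jackson theory of complex-balanced mass-action systems. Once one complex-balanced $x^*$ exists: (i) the function $V(x)=\sum_{i=1}^{s}\bigl(x_i\log(x_i/x^*_i)-x_i+x^*_i\bigr)$ is a strict Lyapunov function for $f_\kappa$ on the interior of each stoichiometric compatibility class, which gives local asymptotic stability; and (ii) the set of complex-balanced states equals $\{x>0 : \log x-\log x^*\in\St^{\perp}\}$, which by Birch's theorem meets each coset $x^0+\St$ (with $x^0\in\R^s_{>0}$) in exactly one positive point — yielding existence of a positive steady state in every compatibility class, and, together with the earlier observation that every positive steady state is complex-balanced, uniqueness there as well. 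This completes part~(1).

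I expect the main obstacle to be not conceptual but the careful assembly of the linear-algebraic and graph-theoretic ingredients: verifying $\dim\widetilde{\St}=p-l$ and $\dim Y(\widetilde{\St})=\dim\St$ (hence $\delta=\dim(\ker Y\cap\widetilde{\St})$ from~\eqref{def-formula}), and the two structural facts about $\ker A_\kappa$ — that it is nonnegatively spanned by terminal-strong-linkage-class vectors, and that for a weakly reversible graph these are strictly positive on their linkage classes (Matrix--Tree). One must also handle with a little care the networks that formula~\eqref{def-formula} excludes, namely those with a linkage class containing more than one terminal strong linkage class; there one uses the intrinsic definition of $\delta$, but such networks are not weakly reversible, so only part~(2) is at issue for them and the complex-balancing contradiction above still applies verbatim. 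The Lyapunov-function and Birch's-theorem inputs are classical and can simply be cited.
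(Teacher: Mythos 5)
The paper does not prove this lemma; it is stated as a known result and attributed to Feinberg~\cite{FeinDefZeroOne}. Your reconstruction is the standard Horn--Jackson--Feinberg argument from that literature (the factorization $f_\kappa=YA_\kappa\Psi$, the identity $\delta=\dim(\ker Y\cap\widetilde{\St})$ forcing every positive steady state to be complex-balanced, the structure of $\ker A_\kappa$ via terminal strong linkage classes, and Birch's theorem plus the entropy-like Lyapunov function for existence, uniqueness, and stability), and it is correct, including the careful handling of the case where a linkage class has more than one terminal strong linkage class. Nothing here diverges from the approach of the cited source, so there is no substantive gap to report.
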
 

\begin{lemma}[Deficiency~one theorem] \label{lem:thm:def-1}
Consider a network $G$ with linkage classes $G_1$, $G_2$, \dots, $G_l$.  Let $\delta$ denote the deficiency of $G$, and let $\delta_i$ denote the deficiency of $G_i$.  Assume that:
\begin{enumerate}
\item each $G_i$ has only one terminal strong linkage class (this holds if $G_i$ is weakly reversible),
\item $\delta_i \leq 1$ for all $i=1,2,\dots,l$, and
\item $\delta_1+\delta_2+ \dots + \delta_l = \delta$. 
\end{enumerate}
Then $G$ is not multistationary: every resulting mass-action system admits either 0 positive steady states or exactly 1 in every stoichiometric compatibility class.
\end{lemma}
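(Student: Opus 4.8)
\noindent\emph{Proof plan.} This is a deep theorem of Feinberg, and the plan is to recast the steady-state equations in the language of the complex graph, as is standard in deficiency theory. First I would factor the mass-action vector field through the space $\R^{\CC}$ of formal linear combinations of complexes: let $\Psi\colon\Rnn^s\to\R^{\CC}$ have $y$-coordinate $\Psi(x)_y=x^y$, let $Y$ now denote the linear map $\R^{\CC}\to\R^s$ determined by the complexes (so $Y$ sends the basis vector indexed by $y$ to the vector $y$, i.e.\ $Y^\top$ as a matrix), and let $A_\kappa\colon\R^{\CC}\to\R^{\CC}$ be the weighted graph Laplacian of the reaction graph built from the rate constants. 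Then $f_\kappa(x)=Y A_\kappa \Psi(x)$, so a positive vector $x^*$ is a steady state exactly when $A_\kappa\Psi(x^*)\in\ker Y$; passing to logarithmic coordinates $\mu=\log x^*$ turns $\mu\mapsto\Psi(e^\mu)$ into an ``exponential-linear'' map, which is ultimately the source of the uniqueness.

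The second step is to pin down the subspace $\ker Y\cap\im A_\kappa$. Under hypothesis (1), $A_\kappa$ is block-diagonal with respect to linkage classes and $\dim\ker A_\kappa=l$, so $\im A_\kappa$ is exactly the space of vectors summing to zero on each linkage class; its intersection with $\ker Y$ then has dimension exactly $\delta$, via $\delta=p-l-\dim\St$ and the fact that $Y$ carries that space onto $\St$. Hypothesis (3), $\delta=\delta_1+\cdots+\delta_l$, is precisely what forces this intersection to split as the direct sum of the per-linkage-class intersections, so the steady-state condition decouples linkage class by linkage class. A linkage class with $\delta_i=0$ contributes the classical Horn--Jackson complex-balancing equations, while a linkage class with $\delta_i=1$ contributes a single additional scalar equation in $\mu$, coming from an explicit spanning vector of the corresponding line.

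The third step is the injectivity argument: restricting the full steady-state map to a stoichiometric compatibility class $\invtPoly$, show it has at most one zero. For the deficiency-zero blocks this is the standard convexity argument (in log coordinates the relevant map is essentially the differential of a strictly convex function, hence injective on each coset of $\St$). For a deficiency-one block, one shows that the extra scalar equation cannot create a second solution: subtracting two hypothetical positive steady states, using that their difference lies in $\St$, and invoking strict monotonicity of the exponential yields a sign contradiction. Combining the blocks again uses hypothesis (3).

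I expect the main obstacle to be precisely the deficiency-one block: proving $\dim(\ker Y\cap\im A_\kappa)=\delta$ under hypothesis (1) alone (the terminal strong linkage classes must be treated carefully, since weak reversibility is not assumed), and then making the sign/monotonicity argument for the extra equation rigorous, including the limiting behavior as $x^*$ approaches the boundary of $\invtPoly$. This is the technical core of Feinberg's proof, and it explains why the deficiency one theorem, unlike the deficiency zero theorem, requires hypotheses (1)--(3).
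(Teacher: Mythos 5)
The paper does not prove this lemma at all: it is quoted as a known theorem of Feinberg and supported only by the citation to \cite{FeinDefZeroOne}, so there is no internal argument to compare yours against; your proposal has to be judged against Feinberg's actual proof. Your first two steps are a correct and standard account of that setup: writing $f_\kappa = Y A_\kappa \Psi$, using hypothesis (1) to get that $\ker A_\kappa$ has dimension $l$ (one basis vector per terminal strong linkage class) and hence $\dim(\ker Y \cap \im A_\kappa) = \delta$, and using hypothesis (3) to split this intersection into the direct sum of the per-linkage-class pieces, which is exactly how the three hypotheses enter. This correctly isolates the steady-state conditions as ``complex balancing on each deficiency-zero linkage class, plus one extra scalar equation for each deficiency-one linkage class.''

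The genuine gap is your third step. The sentence ``subtracting two hypothetical positive steady states, using that their difference lies in $\St$, and invoking strict monotonicity of the exponential yields a sign contradiction'' is not an argument --- it is a restatement of the theorem. The convexity argument you cite does prove uniqueness within a compatibility class when \emph{every} steady state is complex balanced (deficiency zero), because then the set of positive steady states is $\{x : \log x - \log x^* \in \St^\perp\}$, a log-coset of a linear space. Once a deficiency-one linkage class is present, the solution set in $\mu = \log x$ is cut out by an additional transcendental equation and is no longer a coset, and the compatibility class couples all species across all linkage classes simultaneously, so no blockwise monotonicity argument applies. Feinberg's actual proof replaces your one sentence with the combinatorial core of the deficiency one theory: for two putative steady states $x^*, x^{**}$ one sets $\mu = \log x^{**} - \log x^* \in$ (a set determined by $\St$), partitions the complexes into upper/middle/lower according to the sign of $\langle \mu, y\rangle$ relative to linkage-class-dependent cutoffs, and derives a contradiction from a sign analysis of the resulting ``confluence'' relations --- and it is precisely this analysis that fails (and must fail, since multistationarity then occurs) when hypotheses (1)--(3) are violated. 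Since your sketch never identifies where the hypotheses would be used in step three, it could equally well be written down for a network that \emph{is} multistationary, which shows the step as described cannot go through. You acknowledge this is ``the technical core,'' but that core is the theorem, so the proposal is an outline of the framework rather than a proof.
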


\subsection{Minimal multistationary networks} \label{sec:atoms}

We now recall the definition of ``embedding'' and how this notion is useful for assessing multistationarity.
Embedded networks were introduced by the authors in~\cite{simplifying}. 
The embedding relation generalizes the subnetwork relation --  a subnetwork $N$ is obtained from a reaction network $G$ by removing a subset of reactions (that is, setting some of the reaction rates to 0), while an embedded network is obtained by removing a subset of reactions and/or species.
For instance, removing the species $B$ from the reaction $A+B \to A+C$ results in the reaction $A \to A+C$.

\begin{definition} Given a set of reactions $\RR$ and a set of species $\SS$, the {\em restriction of $\RR$ to $\SS$}, denoted $\RR |_{\SS}$, is the subset of $\RR$ remaining after performing the following steps: (1) set to 0 the stoichiometric coefficients of all species 
not in $\SS$, and then (2) discard any {\em trivial reactions} (reactions of the form $\sum m_i A_i \to \sum p_j A_j$, i.e., when the source complex equals the product).
\end{definition}

\begin{definition} \label{def:emb}
The {\em embedded network} $N$ of a network $G = (\SS,\CC,\RR)$ 
 obtained by removing the set of reactions $\{ y \to y' \} \subseteq \RR$ and the set of species  $\{X_i\} \subseteq \SS$ is
\[
N = \left(\SS|_{\CC|_{\RR_N}}, \CC|_{\RR_N}, \ \RR_N := \left(\RR \setminus  \{ y \to y' \}\right)|_{\SS \setminus \{X_i\}}\right)~.
\]
\end{definition}

If $N$ is embedded in $G$, then under some hypotheses, $\capNPSS(N) \leq \capNPSS(G)$ 
(see the survey~\cite[\S 4]{mss-review}). 
One such result, due to Joshi and Shiu~\cite[Theorem~3.1]{joshi2012atoms},``lifts''  steady states from a subnetwork to a larger network if they share the same stoichiometric subspace:

\begin{lemma} \label{lem:lift}
Let $N$ be a subnetwork of a reaction network $G$ which has the same stoichiometric subspace as $G$.  Then $\capNPSS(N) \leq \capNPSS(G) $ and  $\capStable(N) \leq \capStable(G) $.
\end{lemma}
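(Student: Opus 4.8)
The plan is to prove the two inequalities $\capNPSS(N) \leq \capNPSS(G)$ and $\capStable(N) \leq \capStable(G)$ by directly lifting a witnessing mass-action system from $N$ to $G$. First I would fix a choice of positive rate constants for $N$ producing $k$ nondegenerate (respectively, exponentially stable) positive steady states in some stoichiometric compatibility class. Since $N$ is a subnetwork of $G$, it has reactions $\RR_N \subseteq \RR_G$ on the same species set, so I can build a mass-action system for $G$ by keeping the $N$-rate-constants on the reactions in $\RR_N$ and assigning to each reaction in $\RR_G \setminus \RR_N$ a rate constant $\varepsilon > 0$; call the resulting vector field $f_{\kappa,\varepsilon}$, which equals $f_\kappa^N$ (the $N$-system) when $\varepsilon = 0$. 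The hypothesis that $N$ and $G$ share the same stoichiometric subspace $\St$ is what makes the steady-state equations and the notion of nondegeneracy comparable: both systems live in translates of the same $\St$, so a nondegenerate steady state of $N$, where $\mathrm{Im}(df_\kappa^N(x^*)|_\St) = \St$, is exactly the right kind of object to perturb.

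The key step is a perturbation/implicit-function-theorem argument. At a nondegenerate steady state $x^*$ of the $N$-system, the restricted Jacobian $df_\kappa^N(x^*)|_\St : \St \to \St$ is invertible. Restricting the equation $f_{\kappa,\varepsilon}(x) = 0$ to the affine subspace $x^* + \St$ (which is legitimate since the image of $f_{\kappa,\varepsilon}$ lies in $\St$ for every $\varepsilon$, because all reaction vectors of $G$ lie in $\St$), the implicit function theorem gives, for each small $\varepsilon \geq 0$, a unique nearby solution $x^*(\varepsilon)$ depending smoothly on $\varepsilon$, which is positive for $\varepsilon$ small and is again a nondegenerate steady state of the $G$-system (invertibility of the restricted Jacobian is an open condition). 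Doing this simultaneously at all $k$ of the original steady states — they are finitely many and isolated within their compatibility class, by nondegeneracy — and choosing $\varepsilon$ small enough to work for all of them at once, yields $k$ distinct nondegenerate positive steady states of the $G$-system, all lying in a single compatibility class (a translate of $x^0 + \St$, perturbed slightly). Hence $\capNPSS(G) \geq k$, and taking $k = \capNPSS(N)$ gives the first inequality; if $\capNPSS(N) = \infty$ the statement is either trivial from this argument applied to arbitrarily large finite $k$, or handled by the same token since infinitely many nondegenerate steady states would have to accumulate, contradicting isolation — so in the relevant case the finite argument suffices.

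For the stability claim, I would additionally track the eigenvalues: at $\varepsilon = 0$ the $\sigma = \dim(\St)$ nonzero eigenvalues of $df_\kappa^N(x^*)$ have negative real part (the zero eigenvalues correspond to directions transverse to $\St$ and persist because $\St$ is forward-invariant for the $G$-dynamics too). Eigenvalues depend continuously on $\varepsilon$, so for $\varepsilon$ small the $\sigma$ relevant eigenvalues of $df_{\kappa,\varepsilon}(x^*(\varepsilon))$ still have negative real part, i.e.\ each lifted steady state is exponentially stable for the $G$-system. This gives $\capStable(G) \geq \capStable(N)$ by the same simultaneous-perturbation-and-shrink-$\varepsilon$ argument.

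I expect the main obstacle to be the bookkeeping around doing the perturbation \emph{uniformly} over all $k$ steady states while keeping them in one compatibility class, and making the "same stoichiometric subspace" hypothesis do its work cleanly — specifically, arguing that restricting $f_{\kappa,\varepsilon}$ to $x^* + \St$ is the correct reduction and that nondegeneracy of the restricted map is exactly the transversality needed for the implicit function theorem. A secondary subtlety is the $\capNPSS(N) = \infty$ edge case; I would either note that this forces a positive-dimensional component of degenerate-along-$\St$ steady states (contradicting the very definition of nondegeneracy), so it cannot occur, or simply phrase the result as "$G$ admits $k$ nondegenerate positive steady states whenever $N$ does," from which the capacity inequality follows for every finite $k$ and hence in the limit. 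The actual IFT computation and eigenvalue-continuity facts are standard and I would not belabor them.
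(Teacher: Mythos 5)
Your proof is correct and is essentially the standard argument for this result: the paper itself does not reprove the lemma but cites it from Joshi--Shiu \cite[Theorem~3.1]{joshi2012atoms}, where the proof is exactly this perturbation scheme (assign rate constants $\varepsilon$ to the new reactions, apply the implicit function theorem within $x^*+\St$ at each nondegenerate steady state, and use continuity of eigenvalues for the stability claim). The only quibbles are cosmetic: the lifted steady states lie in the \emph{same} compatibility class $x^0+\St$ (not a perturbed translate), and the $\capNPSS(N)=\infty$ aside is not quite right as stated (isolated steady states in an unbounded class need not accumulate), but that edge case is immaterial here.
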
 

This motivates the search for the minimal multistationary networks: 

\begin{definition} \label{def:atom}
\been
\item A network is an {\em embedding-minimal multistationary network} if, with respect to the embedded network relation, it is minimal among all networks that admit multiple nondegenerate positive steady states.
\item An {\em embedding-minimal multistable network} is minimal with respect to the embedded network relation among all networks that admit multiple stable positive steady states.
\enen
\end{definition}

The embedding-minimal multistationary networks previously discovered are (1) certain networks with 1 or 2 species and all inflow/outflow reactions~\cite[Theorem 6.1]{mss-review}, and (2) some ``sequestration'' networks~\cite[Theorem 6.11]{mss-review}\footnote{The authors conjectured in~\cite[\S 6.3]{mss-review} that these sequestration networks are embedding-minimal multistationary networks, and subsequently F\'elix, Shiu, and Woodstock resolved this for the smallest cases~\cite{FSW}.}.
One of our contributions here is to add to this list of known embedding-minimal multistationary networks:
\begin{theorem} \label{thm:list-new-atoms}
Among all networks $G$ that have exactly 1 species or 
that are subnetworks of a 2-reversible-reaction network (i.e., $\{y \to y' \} \subseteq G \subseteq \{y \lra y',~ \widetilde y \lra \widetilde y'\}$), 
the embedding-minimal multistationary 
ones
 are precisely the following:
	\begin{enumerate}
	\item the 1-species networks with arrow diagram\footnote{``Arrow diagrams'' are defined in Definition~\ref{def:arrow-diagram}.} $(\to, \leftarrow, \to)$ or $(\leftarrow, \to, \leftarrow)$, 
	\item the 2-species, 2-reaction networks whose reaction diagram has one of the following forms:
\begin{center}
	\begin{tikzpicture}[scale=.5]
	\path [fill=gray] (0,0) rectangle (1.5,1);
	\draw [->] (0,1) -- (.5,1.3);
	\draw [->] (1.5,0) -- (1,-.3);
	\draw (0,1) --(1.5,0); 
	\path [fill=gray] (3,0) rectangle (4.5,1);
	\draw [->] (3,1) -- (2.5,.7);
	\draw [->] (4.5,0) -- (5,.3);
	\draw (3,1) --(4.5,0); 
	\path [fill=gray] (6,0) rectangle (7.5,1);
	\draw [->] (6,0) -- (5.5,.3);
	\draw [->] (7.5,1) -- (8, .7);
	\draw (6,0) --(7.5,1); 
	\path [fill=gray] (9,0) rectangle (10.5,1);
	\draw [->] (9,0) -- (9.5, -.3);
	\draw [->] (10.5,1) -- (10,1.3);
	\draw (9,0) --(10.5,1);
	\end{tikzpicture}
\end{center}
and the slope of the marked diagonal is {\em not} $-1$, and
	\item the 3-species networks for which there are two pairs of species, denoted $\{i_1,j_1\}$ and $\{i_2,j_2\}$, such that (for $k=1,2$) the projection of the reaction diagram to the $(i_k,j_k)$-plane has one of the following forms:
\begin{center}
	\begin{tikzpicture}[scale=.5]
	\path [fill=gray] (0,0) rectangle (1,1);
	\draw [->] (0,1) -- (.5,1.3);
	\draw [->] (1,0) -- (.5,-.3);
	\draw (0,1) --(1,0); 
	\path [fill=gray] (3,0) rectangle (4,1);
	\draw [->] (3,1) -- (2.5,.7);
	\draw [->] (4,0) -- (4.5,.3);
	\draw (3,1) --(4,0); 
	\end{tikzpicture}
\end{center}	
and the slope of the marked diagonal equals $-1$.	
	\end{enumerate}
\end{theorem}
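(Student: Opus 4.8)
The plan is to assemble Theorem~\ref{thm:list-new-atoms} from the detailed classification results that the paper develops in Sections~\ref{sec:preliminary-results}--\ref{sec:two-rxn-net}, checking minimality with respect to the embedding relation by hand within the small universe of networks under consideration. The first step is to enumerate, using those classification theorems, exactly which networks in the stated family (1-species networks, and networks $N$ with $\{y\to y'\}\subseteq N\subseteq\{y\lradot y',~\wt y\lradot\wt y'\}$) are nondegenerately multistationary. For the 1-species case, Theorem~\ref{thm:s=1-r=1} says the multistationary ones are exactly those containing an arrow-diagram subnetwork $(\to,\la,\to)$ or $(\la,\to,\la)$; the minimal ones among these are the three-reaction networks realizing precisely one of those two arrow diagrams, giving item~(1). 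For the 2-reaction family, Theorem~\ref{thm:speculation} (together with the two-species results, Theorems~\ref{thm:s=2-r=2-alternate} and the like) characterizes multistationarity via the zigzag reaction-diagram forms together with the slope-$\ne -1$ condition in the case of a unique witnessing coordinate pair.

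The second step is the minimality analysis. A candidate network $N$ in the family is embedding-minimal multistationary iff it is nondegenerately multistationary and no network obtained from it by deleting a reaction or a species is. Deleting a reaction from a 2-reaction multistationary network leaves at most one reaction, which by Theorem~\ref{thm:s=1-r=1} (the $r=1$ part, or the $r+s\ge 4$ bound) is never multistationary; so reaction-deletion never destroys minimality-by-reaction once we are down to two reactions, and the real content is species-deletion. Here the two genuinely different sub-cases emerge: if the zigzag witness lives in a unique coordinate plane, then the minimal multistationary network has exactly two species (both are essential, since projecting away either one collapses the zigzag), and the slope-$\ne -1$ condition is inherited, producing item~(2); if instead the reaction diagram has slope $-1$ in its (necessarily two) witnessing planes but is multistationary for a different structural reason, then one needs a \emph{second} coordinate plane to witness multistationarity, forcing at least three species and yielding item~(3) with its two pairs $\{i_1,j_1\},\{i_2,j_2\}$ and the slope-$=-1$ requirement. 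Throughout, Lemma~\ref{lem:lift} (and the embedding monotonicity $\capNPSS(N)\le\capNPSS(G)$ under the stated hypotheses) guarantees that a minimal multistationary network cannot embed any smaller one, so the list is exhaustive; conversely one checks directly that each listed network \emph{is} multistationary and that all of its proper embedded networks fall outside the multistationary classification, hence it is minimal.

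The third step is bookkeeping to rule out overlaps and spurious minimal networks: one must verify that no 1-species network embeds into a 2-reaction network in a way that would make an item-(1) network non-minimal within the combined family, and that the slope conditions are genuinely the dividing line between items (2) and (3) rather than an artifact. This uses the observation that deleting a species from a 2-species slope-$(-1)$ network leaves a 1-species network with too few reactions to be multistationary, so a slope-$(-1)$ multistationary phenomenon cannot be minimal at two species --- which is exactly why such networks only appear, as item~(3), once a third species supplies a second witnessing plane and the network becomes minimal there.

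I expect the main obstacle to be the species-deletion minimality check in item~(3): one must show that every 3-species network with two slope-$(-1)$ witnessing planes that is multistationary actually becomes non-multistationary upon deleting any one of its three species, and simultaneously that no 3-species network \emph{outside} this described form sneaks into the minimal list. This requires carefully tracking how projecting the reaction diagram onto a 2-plane interacts with the classification of Section~\ref{sec:two-rxn-net} --- in particular understanding when a projected diagram still has a valid zigzag form versus when it degenerates --- and confirming that the "two pairs of species" condition is both necessary (a single slope-$(-1)$ plane does not suffice for multistationarity, per the zigzag criterion) and sufficient, with no redundancy among the two pairs that would let one species be removed.
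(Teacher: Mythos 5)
Your overall strategy coincides with the paper's: the theorem is proved there by assembling five corollaries (Corollaries~\ref{cor:s=1-r=1-atoms}, \ref{cor:s=2-r=2-atoms}, \ref{cor:r=2-atoms}, \ref{cor:i=1-r=1-atoms}, and \ref{cor:rev=2-atoms}), each of which combines a classification theorem with a by-hand check of what happens under deletion of one reaction or one species. Your treatment of the $1$-species case (item 1) and of the $2$-irreversible-reaction case with $2$ or $3$ species (items 2 and 3) matches the paper's reasoning, including the key point that a $3$-species network is minimal exactly when both mixed-sign coordinate pairs project to slope $-1$, so that deleting any species lands in a non-(nondegenerately)-multistationary case of Theorem~\ref{thm:s=2-r=2-alternate}.

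There are, however, two concrete omissions. First, the stated family includes subnetworks of $\{y \lra y',~\wt y \lra \wt y'\}$ that actually contain a reversible pair, i.e.\ networks with $3$ or $4$ reactions; your minimality analysis only discusses reaction-deletion ``once we are down to two reactions'' and never rules out new minimal networks among these larger ones. The paper handles this via Theorem~\ref{thm:i=1-r=1} (part (4)) and Theorem~\ref{thm:rev=2} (part (3)): every multistationary network of either type has an embedded $1$-species network of the form ``$\lra\,\to$'', ``$\la\,\lra$'', or ``$\lra\,\lra$'', which is (or further embeds) a $2$-alternating network; hence the only minimal networks arising here are the $1$-species ones already in item (1), and no network with two reversible pairs is ever minimal (Corollaries~\ref{cor:i=1-r=1-atoms} and \ref{cor:rev=2-atoms}). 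Second, you do not argue why no $2$-reaction network with $4$ or more species (or with a zero coordinate of $\beta$) can be embedding-minimal; the paper's Corollary~\ref{cor:r=2-atoms} disposes of these by observing that a species whose $\beta$-coordinate is zero, or whose sign occurs more than once in $\beta$, can be deleted without affecting any of the conditions of Theorem~\ref{thm:speculation}, so such a network always properly embeds a smaller nondegenerately multistationary one. Both gaps are fillable with the tools already in the paper, but as written your list is not yet shown to be exhaustive over the full stated family. (A minor further point: minimality is checked directly from the definition by exhibiting or excluding multistationary proper embedded networks; Lemma~\ref{lem:lift} is used for lifting steady states to larger networks, not for the exhaustiveness of the list.)
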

\noindent
Theorem~\ref{thm:list-new-atoms} summarizes Corollaries~\ref{cor:s=1-r=1-atoms}, \ref{cor:s=2-r=2-atoms}, \ref{cor:r=2-atoms}, \ref{cor:i=1-r=1-atoms}, and \ref{cor:rev=2-atoms}.

\section{Networks with only one reaction or only one species} \label{sec:preliminary-results}

Theorem~\ref{thm:s=1-r=1} in this section answers Questions~\ref{q:main} and
\ref{q:main-nondeg}--\ref{q:main-multistable}
 for two easy cases: when there is only one reaction or one species.  
As a preview to the theorem, we now give some examples.  
\begin{example}  \label{ex:1-reaction}
For the network consisting of the single reaction 
$A+B \to 2A$, the mass-action ODEs~\eqref{eq:ODE-mass-action} are 
\[ \frac{dx_A}{dt} ~=~ - \kappa x_A x_B ~=~ - \frac{dx_B}{dt} ~. 
\]
So, $x_A=0$ or $x_B=0$ at steady state, and therefore there are no positive steady states. 
Clearly, such an argument generalizes for any 1-reaction network: see Theorem~\ref{thm:s=1-r=1} (part 1) below.
\end{example}

To motivate the second case -- networks with only one species -- we recall the following result, which is part of Joshi's classification of fully open networks (i.e., with all inflows/outflows) and one non-flow reaction~\cite{joshi2013complete}:
\begin{proposition} \label{prop:1-rxn-atoms}
The (general) fully open network with one species and one non-flow reaction:
 \[
	0 \leftrightarrow A \quad \quad \quad m_1 A \to n_1 A
\]
is multistationary if and only if $1 < m_1 < n_1$.
\end{proposition}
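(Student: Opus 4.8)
The plan is to reduce the question to counting positive roots of a single univariate polynomial and then apply Descartes' rule of signs.

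\textbf{Setup.} Denote by $\kappa_1,\kappa_2,\kappa_3$ the rate constants of the reactions $0\to A$, $A\to 0$, and $m_1A\to n_1 A$, respectively. The mass-action ODE~\eqref{eq:ODE-mass-action} for this network is the scalar equation
$$\frac{dx_A}{dt} ~=~ \kappa_1 - \kappa_2 x_A + (n_1-m_1)\kappa_3\, x_A^{m_1} ~=:~ g(x_A)~.$$
Since the flow reactions already span all of $\R$, the stoichiometric subspace is $\St=\R$, so there is a single stoichiometric compatibility class and the positive steady states are exactly the positive roots of $g$. Hence the network is multistationary if and only if, for some $\kappa_1,\kappa_2,\kappa_3>0$, the polynomial $g$ has two or more positive roots; in particular $\capPSS$ of the network equals the largest attainable number of positive roots of $g$.

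\textbf{Necessity.} Note $n_1\ne m_1$, since $m_1A\to n_1A$ is a genuine (non-trivial) reaction. If $m_1\le 1$, then $g$ is affine-linear in $x_A$ and has at most one positive root. If $m_1\ge 2$ but $n_1<m_1$, then the nonzero coefficients of $g$, ordered by increasing degree, are $\kappa_1,\ -\kappa_2,\ (n_1-m_1)\kappa_3$, with sign pattern $(+,-,-)$; this has exactly one sign change, so by Descartes' rule of signs $g$ has exactly one positive root. In every case with $\neg(1<m_1<n_1)$ we therefore get at most one positive steady state, so the network is not multistationary.

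\textbf{Sufficiency.} Assume $1<m_1<n_1$. Now the sign pattern of the nonzero coefficients of $g$ is $(+,-,+)$, which has two sign changes, so Descartes gives either $0$ or $2$ positive roots; it remains to realize $2$. Fix any $\kappa_2,\kappa_3>0$. On $(0,\infty)$ the derivative $g'(x)=m_1(n_1-m_1)\kappa_3\,x^{m_1-1}-\kappa_2$ is strictly increasing (here $m_1\ge 2$ is used), running from $-\kappa_2<0$ to $+\infty$, so $g$ has a unique critical point $x_\ast=\bigl(\kappa_2/(m_1(n_1-m_1)\kappa_3)\bigr)^{1/(m_1-1)}$, which is the global minimum of $g$ on $(0,\infty)$, and a short substitution gives $g(x_\ast)=\kappa_1-\tfrac{m_1-1}{m_1}\kappa_2 x_\ast$. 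Since $x_\ast$ does not depend on $\kappa_1$, choosing $\kappa_1\in\bigl(0,\tfrac{m_1-1}{m_1}\kappa_2 x_\ast\bigr)$ yields $g(0)=\kappa_1>0$, $g(x_\ast)<0$, and $g(x)\to+\infty$ as $x\to\infty$; by the intermediate value theorem $g$ has (at least, hence exactly) two positive roots. So the network admits two positive steady states and is multistationary. For generic $\kappa_1$ in this interval the two roots are simple, which incidentally also shows the network is nondegenerately multistationary.

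\textbf{Main obstacle.} There is no real obstacle here; the only mildly technical point is locating the minimizer $x_\ast$ and checking the sign of $g(x_\ast)$ in the sufficiency direction, and even that can be avoided by invoking a general realizability statement for sign patterns of the Descartes type. Alternatively, this proposition is a special case of the classification of one-species, one-non-flow-reaction fully open networks in~\cite{joshi2013complete} and could simply be cited.
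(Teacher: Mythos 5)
Your proof is correct. Note, though, that the paper does not prove this proposition at all: it is stated as a recalled result and attributed to Joshi's classification in \cite{joshi2013complete}, exactly as you suggest in your closing remark. The proposition is also subsumed by the paper's own Theorem~\ref{thm:s=1-r=1} (the network has a $2$-alternating subnetwork precisely when $1<m_1<n_1$), whose proof runs through the same two ingredients you use --- Descartes' rule of signs for the upper bound, and a realizability statement for alternating sign patterns --- except that the paper outsources the realizability step to Grabiner's lemma (Lemma~\ref{lem:descartes-achieved}), whereas you carry it out by hand: locating the unique critical point $x_\ast$ of $g$ on $(0,\infty)$, computing $g(x_\ast)=\kappa_1-\tfrac{m_1-1}{m_1}\kappa_2 x_\ast$, and shrinking $\kappa_1$ to force a sign change. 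That explicit calculus argument is what your approach buys: a self-contained, elementary proof needing no external lemma. Two minor points: your parity claim (``Descartes gives either $0$ or $2$ positive roots'') uses the full Descartes rule with parity, while the paper's Proposition~\ref{pro:Descartes_original} states only the bound --- but your argument never actually needs the parity, since the intermediate value theorem supplies the two roots and the bound caps them at two. And ``for generic $\kappa_1$'' is an understatement: for \emph{every} $\kappa_1$ in your open interval both roots are simple, since $g(x_\ast)<0$ keeps them away from the unique critical point.
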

In light of Proposition~\ref{prop:1-rxn-atoms}, what distinguishes the multistationary network $0 \leftrightarrow A,~ 2A \to 3A$ from the non-multistationary network  $0 \leftrightarrow A,~ 2A \leftarrow 3A$?  Also, can this distinction be generalized beyond fully open networks?  The answers are as follows. 
First, the reactions in the first network have the form $(\to, \leftarrow, \to)$ (as seen in $0 \to A, ~ 0 \leftarrow A, ~ 2A \to 3A$, where we list the reactions so that the reactants are in increasing order) vs.\ $(\to, \leftarrow, \leftarrow)$ in the second network.  
And, indeed, these ideas generalize (see Definition~\ref{def:2-sign-change} and part 2 of Theorem~\ref{thm:s=1-r=1}). 

A final example is the 1-species network $0 \leftarrow A \to 2A$ from Example~\ref{ex:cap} that admits infinitely many (degenerate) positive steady states.  We will see that this property (admitting infinitely many degenerate positive steady states) has a simple characterization in the 1-species case: 
each reaction has an ``opposing'' reaction so the pair has the form ``$\lradot$"; see part 2 of Theorem~\ref{thm:s=1-r=1}.


\subsection{Main result} \label{subsec:main-result}
To state the main result of this section (Theorem~\ref{thm:s=1-r=1}), we need the following definitions.

\begin{definition} \label{def:arrow-diagram}
Let $G$ be a reaction network that contains only one species $A$. Thus, each reaction of $G$ has the form $aA \to bA$, where $a,b \ge 0$ and $a \ne b$. Let $m$ be the number of (distinct) reactant complexes, and let $a_1< a_2 < \ldots < a_m$ be their stoichiometric coefficients. The {\em arrow diagram of $G$}, denoted $\rho = (\rho_1, \ldots , \rho_m)$, is the element of $\{\to , \la, \lradot \}^m$ defined by:
\begin{equation*}
 \rho_i~=~ 
 \left\lbrace\begin{array}{ll}
   \to & \text{if for all reactions $a_iA \to bA$ in $G$, it is the case that $b > a_i$} \\
   \la & \text{if for all reactions $a_iA \to bA$ in $G$, it is the case that $b < a_i$} \\
   \lradot & \text{otherwise.}
 \end{array}\right.
\end{equation*}
\end{definition}

\begin{definition} \label{def:2-sign-change}
For positive integers $T \geq 2$, a {\em $T$-alternating network} is a 1-species network with 
exactly $T+1$ reactions and with arrow diagram $\rho \in \{\to , \la\}^{T+1}$ such that $\rho_i = \to$ if and only if $\rho_{i+1} = \la$ for all $i \in \{1, \ldots, T\}$. 
\end{definition}


\begin{example} \label{ex:alternating}
Consider the following network:
 	\begin{align*}
	G~=~\{ 0 \leftarrow A \to 2A \lra 3A \to A\}~.
	\end{align*}
\begin{enumerate}
	\item $\{ 0 \leftarrow A,~ 2A \to 3A\}$ is a 1-alternating subnetwork of $G$ 
	with arrow diagram $(\leftarrow, \to)$.
	\item Each of the following is a 1-alternating subnetwork of $G$ 
	with arrow diagram $(\to, \la)$:
	$\{ A \to 2A,~ 2A \leftarrow 3A\}$, 
	$\{ A \to 2A,~ A \leftarrow 3A\}$, and 
	$\{  2A \to 3A,~ 2A \leftarrow 3A\}$.
	\item $\{ 0 \leftarrow A,~ A \to 2A\}$ is {\em not} a 1-alternating subnetwork of $G$: 
	its arrow diagram is $(\lradot)$.
	\item $\{ 0 \leftarrow A,~ 2A \to 3A, ~ 2A \leftarrow 3A \}$ is a 2-alternating subnetwork of $G$ with arrow diagram $(\la, \to, \la)$.
\end{enumerate}
\end{example}


\begin{theorem}[Classification of multistationary networks with only one reaction or species] \label{thm:s=1-r=1}
~
\begin{enumerate}
	\item Every network that consists of a single (irreversible) reaction ($y \to y'$) or a single pair of reversible reactions ($y \leftrightarrows y'$) is {\em not} multistationary. 
	\item Let $G$ be a network with only one species.  Then $G$ is multistationary (i.e., $\capPSS(G)\geq 2$) if and only if  $G$ has a 2-alternating subnetwork or 
	 the arrow diagram of $G$ has the form $(\lradot, \dots, \lradot)$. 
Moreover:
	\begin{enumerate}[(a)]
		\item $G$ admits infinitely many (degenerate) positive steady states (i.e., $\capPSS(G) = \infty$) if and only if 
		 the arrow diagram of $G$ has the form $(\lradot, \dots, \lradot)$. 
		\item For $T \geq 2$, $G$ admits at least $T$ nondegenerate positive steady states (i.e., $\capNPSS(G) \geq T$) if and only if $G$ has a $T$-alternating subnetwork.  
		\item For $T \geq 2$, $G$ admits at least $\lceil T/2 \rceil$ (respectively, $\lfloor T/2 \rfloor$) exponentially stable positive steady states (i.e., $\capStable(G) \geq \lceil T/2 \rceil$ or $ \geq \lfloor T/2 \rfloor$, respectively)  if $G$ has a $T$-alternating subnetwork 
		 with arrow diagram $(\to, \la, \to, \ldots)$ (respectively, $(\la, \to, \la, \ldots)$).
	\end{enumerate}
Thus, $G$ is nondegenerately multistationary if and only if $G$ has a 2-alternating subnetwork.
\end{enumerate}
\end{theorem}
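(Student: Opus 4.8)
The plan is to reduce everything to a one-variable root-counting problem. Since $G$ has a single species $A$, the mass-action ODE~\eqref{eq:ODE-mass-action} is scalar: $\dot x = f_\kappa(x)$ with $f_\kappa(x) = \sum_i \kappa_i x^{a_i}(b_i - a_i)$, where the sum runs over the reactions $a_i A \to b_i A$. The positive steady states are the positive roots of $f_\kappa$, and there is only one stoichiometric compatibility class to consider (either $\St = \R$ or $\St = 0$; the latter forces every reaction to be trivial, which is impossible). Part~1 is then the observation that a single reaction $y\to y'$ (or reversible pair $y\rla y'$) has its reaction vectors spanning a line and all pointing the same way (or being a single antipodal pair), so $f_\kappa$ either never vanishes on $\R^s_{>0}$ or vanishes identically on a coset — in neither case are there two isolated positive steady states. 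I would phrase this via Lemma~\ref{lem:consistent}: a $1$-reaction network is consistent only if it is trivial, and a single reversible pair $y\rla y'$ has $f_\kappa(x) = (\kappa_{y\to y'}x^y - \kappa_{y'\to y}x^{y'})(y'-y)$, whose positive zero set is $\{x : \kappa_{y\to y'}x^y = \kappa_{y'\to y}x^{y'}\}$, a hypersurface that meets each line $x^0 + \R(y'-y)$ in at most one point by strict monotonicity of $t \mapsto x(t)^{y} / x(t)^{y'}$ along that line (the exponent $\langle y - y', y-y'\rangle \ne 0$). So $\capPSS \le 1$.

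For part~2, collapse each pair of reactions with the same reactant $a_i$ into a single effective monomial: after combining, $f_\kappa(x) = \sum_{i=1}^m c_i x^{a_i}$ where the sign of $c_i$ is exactly what $\rho_i$ records ($c_i > 0 \Leftrightarrow \rho_i = \ \to$, $c_i < 0 \Leftrightarrow \rho_i = \ \la$, and $\rho_i = \lradot$ means $c_i$ can be made any real number — positive, negative, or zero — by choosing the two rate constants). Dividing by $x^{a_1}$, we get a polynomial (in $x$, with possibly a few negative... no: after dividing, exponents $a_i - a_1 \ge 0$, so an honest polynomial) $g(x) = \sum_i c_i x^{a_i - a_1}$ with the same positive roots. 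Now the whole theorem is Descartes' rule of signs plus its sharpness. The characterization of $\capPSS(G)=\infty$ (part 2a) is clear: if every $\rho_i = \lradot$ we can set all $c_i = 0$, making $f_\kappa \equiv 0$; conversely if some $c_i$ is forced nonzero, $g$ is a nonzero polynomial with finitely many roots, and finiteness of $\capPSS$ follows because the sign pattern of the $c_i$ ranges over a bounded set of possibilities. For 2b, the number of sign changes in the sequence $(c_1, \dots, c_m)$ bounds the number of positive roots of $g$ (Descartes), and a $T$-alternating subnetwork exhibits $T$ genuine sign changes; sharpness — that $T$ sign changes can be realized with $T$ distinct simple positive roots, hence $T$ nondegenerate steady states — is the classical converse to Descartes and can be arranged by spacing the roots and solving the resulting linear (Vandermonde) system for the $c_i$, then checking the induced rate constants are positive. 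Crucially, a subnetwork with exactly $T$ sign changes has the same stoichiometric subspace as $G$ (namely $\R$), so Lemma~\ref{lem:lift} lifts the $T$ nondegenerate steady states back to $G$. For 2c, at a simple root the sign of $g'$ alternates between consecutive roots, so $\dot x = f_\kappa(x)$ is alternately decreasing/increasing through them: every other root is a sink (exponentially stable), which gives $\lceil T/2\rceil$ or $\lfloor T/2\rfloor$ stable states depending on the parity and leading sign pattern $(\to, \la, \dots)$ versus $(\la, \to, \dots)$.

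The final sentence — $G$ is nondegenerately multistationary iff $G$ has a $2$-alternating subnetwork — is then just the $T=2$ instance of 2b combined with the forward direction: if $G$ is nondegenerately multistationary then $f_\kappa$ has at least two distinct positive roots for some $\kappa$, hence at least two sign changes among the (realized) $c_i$; two sign changes among a subsequence of $\{\to,\la\}$-type entries (note $\lradot$ entries can be specialized to either sign, or to $0$ and dropped) give a $2$-alternating subnetwork. I expect the main obstacle to be the sharpness direction of Descartes' rule with the positivity constraint on rate constants: one must show that not only can a polynomial with a prescribed sign pattern have the maximal number of positive roots, but that the specific coefficients $c_i$ realizing this can be written as $c_i = \kappa_i(b_i - a_i)$ or as a difference $\kappa_i(b_i - a_i) + \kappa_i'(b_i' - a_i)$ with all $\kappa$'s strictly positive — this is where the $\lradot$ case and the freedom to choose the product network's reactions matters, and it requires a small but careful argument that the sign (and, for the stability count, the simplicity) of each $c_i$ is independently achievable.
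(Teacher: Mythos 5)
Your overall route is the same as the paper's: reduce to the univariate mass-action polynomial, read off the arrow diagram from the signs of the collapsed coefficients $c_i$, use Descartes' rule for the forward (necessity) directions and the all-$\lradot$ characterization of $f_\kappa\equiv 0$ for part 2(a), establish sharpness of Descartes for a $T$-alternating network, and lift to $G$ via Lemma~\ref{lem:lift}. Two local substitutions are worth noting. First, for part~1 the paper simply invokes the deficiency zero theorem (such a network has $\delta=0$), whereas you argue directly via consistency and monotonicity of $x\mapsto x^{y-y'}$ along cosets of the stoichiometric subspace; your argument is more elementary and self-contained, and it is correct (the directional derivative of $\log x^{y-y'}$ along $y'-y$ is $-\sum_i(y_i-y_i')^2/x_i<0$). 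Second, for the sharpness of Descartes the paper cites Grabiner's theorem (Lemma~\ref{lem:descartes-achieved}), which delivers both the $T$ simple roots and the signs of the derivative there; you propose instead to prescribe $T$ distinct positive roots and solve the resulting generalized Vandermonde system. You flag the sign verification (``checking the induced rate constants are positive'') as the main obstacle and leave it open, but it does close cleanly: the $T\times(T+1)$ generalized Vandermonde matrix with distinct positive nodes and distinct exponents has rank $T$, so its kernel is spanned by a single nonzero $c$; if any $c_i$ were zero, or if the sign sequence of $c$ had fewer than $T$ changes, Descartes' rule would forbid the polynomial from having the $T$ prescribed positive roots. Hence all $c_i\neq 0$, the signs alternate (and can be globally normalized to start with $+$ or $-$ as needed), each root is simple, and positive rate constants are recovered from $\kappa_i=|c_i|/|b_i-a_i|$. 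Your stability count in 2(c) via the alternation of $f'$ at consecutive simple roots, with the parity set by the sign of the lowest-order coefficient, then reproduces the $\lceil T/2\rceil$ versus $\lfloor T/2\rfloor$ dichotomy that the paper extracts from Grabiner's statement. So the proposal is correct in outline; the one step you sketched rather than proved is exactly the content the paper outsources to the Grabiner citation, and your interpolation argument is a legitimate self-contained replacement for it once the reverse-Descartes sign check is made explicit.
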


We will prove Theorem~\ref{thm:s=1-r=1} in Section~\ref{sec:proof-s=1-r=1}.

\begin{remark} \label{rmk:boros}
In~\cite{revisiting}, Boros analyzed 1-species networks that consist of two reversible reactions $y \lra y'$ and $\widetilde y \lra \widetilde y'$ with no complexes in common, and determined that such a network is multistationary if and only if the network 
has arrow diagram $(\to, \la, \to, \la)$
(for instance, if $y < y' < \widetilde y < \widetilde y'$).
Here, Theorem~\ref{thm:s=1-r=1} easily recovers Boros's criterion.  See also Theorem~\ref{thm:rev=2}.
\end{remark}


\begin{corollary} \label{cor:r+s}
Every nondegenerately multistationary reaction network satisfies $r+s \geq 4$, where $r$ is the number of reactions and $s$ is the number of species.
\end{corollary}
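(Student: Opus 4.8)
The plan is to derive Corollary~\ref{cor:r+s} from Theorem~\ref{thm:s=1-r=1} by eliminating all small values of $r+s$ one case at a time. Suppose $G$ is nondegenerately multistationary with $r$ reactions and $s$ species; I want to show $r+s\geq 4$. Since a network with no reactions or no species has trivial (or empty) dynamics and cannot be multistationary, we may assume $r\geq 1$ and $s\geq 1$, so $r+s\geq 2$. The cases to rule out are therefore $r+s=2$ (i.e.\ $r=s=1$) and $r+s=3$ (i.e.\ $(r,s)\in\{(1,2),(2,1)\}$).

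First I would dispense with the cases $r=1$. By part~1 of Theorem~\ref{thm:s=1-r=1}, a network consisting of a single irreducible reaction $y\to y'$ is not multistationary (hence not nondegenerately multistationary), regardless of the number of species. This kills $(r,s)=(1,1)$ and $(r,s)=(1,2)$ in one stroke. Next I would handle $(r,s)=(2,1)$: here $G$ is a 1-species network with exactly two reactions. By the final sentence of Theorem~\ref{thm:s=1-r=1}, $G$ is nondegenerately multistationary if and only if it has a $2$-alternating subnetwork; but a $2$-alternating network has, by Definition~\ref{def:2-sign-change}, exactly $T+1=3$ reactions, so a network with only $2$ reactions cannot contain one as a subnetwork. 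Hence $(r,s)=(2,1)$ is impossible. This exhausts all cases with $r+s\leq 3$, so $r+s\geq 4$.

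There is essentially no main obstacle here — the corollary is a short bookkeeping argument once Theorem~\ref{thm:s=1-r=1} is in hand. The only point requiring a sentence of care is the initial reduction: one should note explicitly that a network with $s=0$ species has an empty set of unknowns (so "positive steady state" is vacuous or the ODE system is on $\mathbb{R}^0$) and a network with $r=0$ reactions has $f_\kappa\equiv 0$, so every point is a degenerate steady state and there are no nondegenerate ones unless $\St=\{0\}$, in which case $S$ has dimension $0$ and the nondegeneracy condition $\mathrm{Im}(df_\kappa(x^*)|_S)=\St$ is satisfied trivially by every point — but then "multistationary" fails because a single stoichiometric compatibility class reduced to a point cannot contain two steady states. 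In any event $r+s\geq 2$ is safe, and the substantive content is the three-case elimination above.

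I would also remark, for the reader, that this bound is sharp: the $1$-alternating-type network $\{0\to A,~0\leftarrow A,~2A\to 3A\}$ from the introduction has $(r,s)=(3,1)$, so $r+s=4$, and it is nondegenerately multistationary by part~2(b) of Theorem~\ref{thm:s=1-r=1} (it contains a $2$-alternating subnetwork with arrow diagram $(\to,\leftarrow,\to)$). Thus no better inequality of the form $r+s\geq c$ holds.
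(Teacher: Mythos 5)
Your proof is correct and follows essentially the same route as the paper: $r=1$ is dispatched by part~1 of Theorem~\ref{thm:s=1-r=1}, and $(r,s)=(2,1)$ is ruled out because a $2$-alternating subnetwork requires three reactions, so part~2 of that theorem applies. The extra remarks on the degenerate $r=0$/$s=0$ cases and on sharpness are harmless additions but not needed.
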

\begin{proof}
We prove the contrapositive.  
The options for $(r,s)$ are $(1,1)$, $(1,2)$, and $(2,1)$.  If $r=1$, then the result follows from Theorem~\ref{thm:s=1-r=1} (part 1).  In the remaining case, the network has only one species and two reactions, so it does not have a 2-alternating subnetwork, and hence the result follows from  Theorem~\ref{thm:s=1-r=1} (part 2).
\end{proof}

The only obstacle to a partial converse of Corollary~\ref{cor:r+s} is the fact that 1-reaction networks are {\em not} multistationary.  That is, we have the following result, which we will prove in Section~\ref{sec:main-results-s=r=2}:
\begin{proposition} \label{prop:s+r}
If $r+s \geq 4$, with $r \geq 2$, then there exists a nondegenerately multistationary reaction network with exactly $r$ reactions and $s$ species. 
\end{proposition}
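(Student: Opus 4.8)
The plan is to prove Proposition~\ref{prop:s+r} by exhibiting, for each valid pair $(r,s)$ with $r\geq 2$ and $r+s\geq 4$, an explicit nondegenerately multistationary network with exactly $r$ reactions and $s$ species. The strategy is to build everything up from a single small ``seed'' network and then pad it with extra species and extra reactions in ways that preserve nondegenerate multistationarity, using Lemma~\ref{lem:lift} as the main engine.

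\medskip

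\noindent\textbf{Step 1: the seed networks.} First I would handle the base cases. For $s=1$, $r\geq 3$: by Theorem~\ref{thm:s=1-r=1}, the $2$-alternating network $\{0 \to A,~ 0 \leftarrow A,~ 2A \to 3A\}$ (arrow diagram $(\to,\la,\to)$) is nondegenerately multistationary with $r=3$, $s=1$; for $r>3$ I append extra reactions (see Step~3). For $s=2$, $r=2$: I would invoke Section~\ref{sec:main-theory}'s result that the $2$-species $2$-reaction networks with a zigzag reaction diagram of slope $\neq -1$ are nondegenerately multistationary --- a concrete instance such as $\{A \to 3B,~ 3A+B \to A\}$ (a zigzag with the reactants being $A$ and $3A+B$, joined by a diagonal of slope $\neq -1$) does the job. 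These two families of seeds, one per ``corner'' of the $(r,s)$ region with $r\geq 2$, cover $(2,2)$ and $(3,1)$.

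\medskip

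\noindent\textbf{Step 2: adding species.} To go from a network with $s$ species to one with $s+1$ species while keeping the same number of reactions, I would take a nondegenerately multistationary network $N$ and form $G$ by adjoining a new species $A_{s+1}$ that appears in every reaction as a pure catalyst, e.g.\ replacing each reaction $y\to y'$ by $y+A_{s+1}\to y'+A_{s+1}$. Then $N$ is an embedded network of $G$ obtained by deleting species $A_{s+1}$, and moreover the mass-action ODEs for $G$ factor so that $x_{s+1}$ is a conserved quantity and, on each slice $x_{s+1}=c>0$, the dynamics are exactly those of $N$ with rescaled rate constants. Hence $\capNPSS(G)\geq \capNPSS(N)\geq 2$. (Alternatively, one can cite the relevant ``adding a catalyst-only species'' lemma if it appears in the cited surveys.) Iterating, from the $(2,2)$ seed I obtain nondegenerately multistationary networks with $r=2$ and every $s\geq 2$, i.e.\ all pairs $(2,s)$ with $s\geq 2$.

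\medskip

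\noindent\textbf{Step 3: adding reactions.} To increase $r$ while keeping $s$ fixed, given a nondegenerately multistationary network $N$ I would form $G$ by adding one more reaction $y^\dagger\to \widetilde{y}^\dagger$ chosen so that (i) its reaction vector lies in the stoichiometric subspace $\St$ of $N$, so that $\St(G)=\St(N)$, and (ii) $N$ remains a subnetwork of $G$. Then Lemma~\ref{lem:lift} gives $\capNPSS(G)\geq\capNPSS(N)\geq 2$ directly. The simplest realization: add a reversible version of an existing reaction's "reverse'' or add a reaction like $2y^\dagger\to 2y^\dagger{}'$ that is a scalar multiple of an existing reaction vector --- for the $1$-species seed $\{0\lra A,~2A\to 3A\}$ one can keep appending reactions such as $4A\to 5A$, $6A\to 7A$, $\dots$ (each alternating appropriately, or at least each keeping $N$ as a subnetwork and $\St$ unchanged, since $\St=\R$ already). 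Combining Steps 2 and 3, every $(r,s)$ with $r\geq 2$, $s\geq 1$, $r+s\geq 4$ is reached.

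\medskip

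\noindent\textbf{Main obstacle.} The routine parts are the seed constructions (already essentially proved in earlier sections) and adding reactions via Lemma~\ref{lem:lift}. The one genuinely delicate point is Step~2: I must verify that adjoining a catalytic species truly preserves \emph{nondegenerate} multistationarity and does not merely give degenerate steady states --- one has to check that the Jacobian of $f_\kappa$ for $G$, restricted to $\St(G)$, has full rank at the lifted steady states. This follows because the new conservation law $x_{s+1}=\mathrm{const}$ is ``transverse'' and the block structure of the Jacobian reduces the rank condition for $G$ to the one already known for $N$, but writing this cleanly (rather than hand-waving) is where the real work lies; alternatively, if the cited surveys~\cite{mss-review,simplifying} already contain an ``embedding + same-dimension-up-to-catalyst'' lifting statement for $\capNPSS$, I would simply quote it and the proposition becomes a short bookkeeping argument over the finitely many shapes of $(r,s)$.
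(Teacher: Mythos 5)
Your overall strategy---a small seed network, padded with extra reactions whose vectors lie in the seed's stoichiometric subspace (so that Lemma~\ref{lem:lift} applies) and with extra species adjoined catalytically---is essentially the paper's own proof; the only cosmetic difference is that for $s=1$ the paper takes an $(r-1)$-alternating network outright rather than appending reactions to the $2$-alternating seed. There is, however, one concrete error: your proposed $(2,2)$ seed $\{A \to 3B,\ 3A+B \to A\}$ does not work. Its reaction vectors are $(0,3)-(1,0)=(-1,3)$ and $(1,0)-(3,1)=(-2,-1)$, which are not negative scalar multiples of one another, so the network is not consistent; by Lemma~\ref{lem:consistent} it admits \emph{no} positive steady states, and condition~1 of Theorem~\ref{thm:s=2-r=2} fails (having the two reactants joined by a diagonal of slope $\neq -1$ is necessary but not sufficient). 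A correct seed is, e.g., $\{B \to A,\ A+2B \to 3B\}$: the reaction vectors are $(1,-1)$ and $(-1,1)$, the reactant polytope has slope $1 \neq -1$, and $\beta=(1,-1)$ has a positive and a negative coordinate, so all hypotheses of Theorem~\ref{thm:s=2-r=2} hold. With that replacement, and with the short verification you already sketch for the catalytic-species step (on each invariant slice $x_{s+1}=c>0$ the dynamics coincide with those of $N$ after rescaling rate constants, so the Jacobian restricted to the unchanged stoichiometric subspace---and hence nondegeneracy---is preserved; the paper treats this as immediate), your argument is complete and matches the published one.
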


Returning to 1-species/1-reaction, Theorem~\ref{thm:s=1-r=1} directly implies 
the next two results: 
\begin{corollary} \label{cor:s=1}
For a reaction network $G$ that has only one species, the capacity of $G$ (i.e., the maximum number of nondegenerate positive steady states) is 
$$ \capNPSS (G)~=~ \max \{T \in \mathbb{Z}_{\geq 0} \mid G \mathrm{~has~a~}T\mathrm{-alternating~subnetwork} \}~,$$
and the maximum number of positive steady states is:
$$ \capPSS (G)~=~
  \begin{cases} 
      \infty  & \text{ \specialcell{the arrow diagram of $G$ has the form $(\lradot, \dots, \lradot)$.}}\\ 
      \capNPSS(G) & \text{else.} 
  \end{cases}
$$
\end{corollary}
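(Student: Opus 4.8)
\emph{Plan of proof.} The corollary simply repackages Theorem~\ref{thm:s=1-r=1}, and I would prove it by handling the formula for $\capNPSS(G)$ and the formula for $\capPSS(G)$ separately. Throughout I assume $G$ has at least one reaction (otherwise the statement is vacuous), so that the stoichiometric subspace of $G$ is all of $\R$.

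\emph{The formula for $\capNPSS(G)$.} For $T\ge 2$ this is immediate from part~2(b) of Theorem~\ref{thm:s=1-r=1}, which says exactly that $\capNPSS(G)\ge T$ iff $G$ has a $T$-alternating subnetwork; so the claimed identity holds whenever $\max\{T:\ G\text{ has a }T\text{-alternating subnetwork}\}\ge 2$. It remains to treat the case $\capNPSS(G)\le 1$, which by the final sentence of Theorem~\ref{thm:s=1-r=1} is exactly the case that $G$ has no $2$-alternating subnetwork. There, extending the terminology in the evident way (a two-reaction network with arrow diagram $(\to,\la)$ or $(\la,\to)$ being $1$-alternating, as in Example~\ref{ex:alternating}, and a single reaction being $0$-alternating by convention), I would argue: if $G$ contains a $1$-alternating subnetwork $N$, then $N$ is consistent, so $\capPSS(N)\ge 1$ by Lemma~\ref{lem:consistent}; since for a one-species system $df_\kappa(x^*)$ is the scalar $f_\kappa'(x^*)$, which is nonzero whenever $f_\kappa$ is a sum of two monomials of distinct degrees (a one-line computation), that steady state is nondegenerate, so $\capNPSS(N)\ge 1$, and since $N$ and $G$ share the stoichiometric subspace $\R$, Lemma~\ref{lem:lift} gives $\capNPSS(G)\ge 1$, hence $=1$. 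Conversely, if $G$ has no $1$-alternating subnetwork then either $G$ is inconsistent, so $\capPSS(G)=0$ by Lemma~\ref{lem:consistent}, or else every reaction of $G$ has the same reactant coefficient $a$, so $f_\kappa(x)=C(\kappa)\,x^a$ for a linear form $C$; then a positive steady state forces $C(\kappa)=0$, hence $f_\kappa\equiv 0$ and every positive point is a degenerate steady state. Either way $\capNPSS(G)=0=\max\{T:\ G\text{ has a }T\text{-alternating subnetwork}\}$.

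\emph{The formula for $\capPSS(G)$.} The first branch --- $\capPSS(G)=\infty$ exactly when the arrow diagram of $G$ is $(\lradot,\dots,\lradot)$ --- is literally part~2(a) of Theorem~\ref{thm:s=1-r=1}. For the second branch, assume the arrow diagram is not $(\lradot,\dots,\lradot)$, so $\capPSS(G)<\infty$ by part~2(a). Since $\capPSS(G)\ge\capNPSS(G)$ always, and since $\capNPSS(G)=\max\{T:\ G\text{ has a }T\text{-alternating subnetwork}\}$ by the previous paragraph, it suffices to prove $\capPSS(G)\le\max\{T:\ G\text{ has a }T\text{-alternating subnetwork}\}$. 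I would do this by writing the positive steady states of any mass-action system of $G$ as the positive roots of the univariate polynomial obtained from~\eqref{eq:ODE-mass-action} by collecting the monomials $x^{a_i}$ over the distinct reactant coefficients $a_1<\dots<a_m$ and dividing by $x^{a_1}$: the coefficient of $x^{a_i-a_1}$ is a fixed $\R$-linear form in $\kappa$ whose sign, as $\kappa$ ranges over positive values, is forced to be positive if $\rho_i=\ \to$, negative if $\rho_i=\ \la$, and is free if $\rho_i=\ \lradot$. Descartes' rule of signs bounds the number of positive roots by the number of sign changes of this coefficient sequence, and the maximum number of sign changes attainable over positive $\kappa$ equals the length of the longest alternating run extractable from $\rho$, which is precisely $\max\{T:\ G\text{ has a }T\text{-alternating subnetwork}\}$. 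Combining this bound with $\capPSS(G)\ge\capNPSS(G)$ gives equality.

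\emph{Main obstacle.} The only step that is not bookkeeping is the upper bound $\capPSS(G)\le\max\{T:\ G\text{ has a }T\text{-alternating subnetwork}\}$ in the non-$\lradot$ case; this is the one-species instance of the nondegeneracy conjecture (Conjecture~\ref{conj:equality}), and it is \emph{not} contained in the statement of Theorem~\ref{thm:s=1-r=1}, which only records that $\capPSS(G)\ge 2$ forces $G$ to have a $2$-alternating subnetwork or arrow diagram $(\lradot,\dots,\lradot)$. So one must either re-run the Descartes-of-signs computation underlying the ``only if'' direction of part~2(b) --- now tracking the number of positive roots, not merely whether two exist --- or cite that computation directly from the proof of the theorem. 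A secondary, purely cosmetic, point is fixing the conventions for ``$T$-alternating'' at $T\in\{0,1\}$ and for the maximum of the empty set, so that the stated identities read correctly in the degenerate extreme cases ($\capNPSS(G)\in\{0,1\}$ and networks all of whose reactions share a single reactant); these are disposed of above but should be made explicit.
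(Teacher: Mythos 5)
Your proof is correct and follows the route the paper intends (the paper offers no separate argument, asserting only that the corollary follows directly from Theorem~\ref{thm:s=1-r=1}); your unpacking uses exactly the paper's own tools --- parts 2(a)--(b) of the theorem, Lemma~\ref{lem:consistent}, Lemma~\ref{lem:lift}, and the Descartes-plus-Lemma~\ref{lem:T-alternating} computation. You are also right that the one non-trivial step, the bound $\capPSS(G)\le\max\{T\}$ in the non-$\lradot$ case, is not literally in the theorem's statement but is supplied by Proposition~\ref{pro:Descartes_original} (roots counted with multiplicity) together with part~1 of Lemma~\ref{lem:T-alternating}, which is precisely how the paper's proof of the ``$\Rightarrow$'' directions proceeds.
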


We obtain new infinite families of 1-species minimal multistationary/multistable networks\footnote{Part 2 of Corollary~\ref{cor:s=1-r=1-atoms} includes a family of networks aready known to be embedding-minimal multistationary networks~\cite[Theorem 6.1 (part 2)]{mss-review}.}:
\begin{corollary}[Classification of embedding-minimal multistationary/multistable networks with one reaction or one species] \label{cor:s=1-r=1-atoms}
~
\been
\item There are no 1-reaction embedding-minimal multistationary or multistable networks. 
\item The 1-species embedding-minimal multistationary networks 
are precisely the 2-alternating networks (i.e., the  
3-reaction networks with arrow diagram $(\la, \to, \la)$ or $(\to, \la, \to)$). 
\item The 1-species embedding-minimal multistable networks are precisely the 3-alternating networks with arrow diagram $(\to, \la, \to, \la)$. 
\enen
\end{corollary}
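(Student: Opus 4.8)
The plan is to read off all three parts from Theorem~\ref{thm:s=1-r=1} together with Descartes' rule of signs, using one preliminary remark: for a network $G$ with a single species, the only nontrivial embedded network is a \emph{subnetwork} of $G$, since deleting the unique species makes every reaction trivial and leaves the empty network. Hence, within the class of $1$-species networks, ``embedding-minimal'' means the same as ``subnetwork-minimal.'' Part~1 is then immediate: a single-reaction network is not multistationary by Theorem~\ref{thm:s=1-r=1}(1), and multistable networks are multistationary, so no $1$-reaction network lies in either family we are describing. For Part~2, Theorem~\ref{thm:s=1-r=1}(2) identifies the nondegenerately multistationary $1$-species networks as exactly those that contain a $2$-alternating subnetwork, and Theorem~\ref{thm:s=1-r=1}(2b), with $T=2$, shows that each $2$-alternating network is itself nondegenerately multistationary. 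Since a $2$-alternating network has exactly three reactions, no $1$-species network with fewer reactions contains a $2$-alternating subnetwork; thus no proper subnetwork of a $2$-alternating network --- and, trivially, not the empty network obtained by deleting its species --- is nondegenerately multistationary, whereas every nondegenerately multistationary $1$-species network does contain such a subnetwork. This identifies the embedding-minimal multistationary $1$-species networks as precisely the $2$-alternating networks, i.e.\ the $3$-reaction networks with arrow diagram $(\to,\la,\to)$ or $(\la,\to,\la)$.

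Part~3 is where the real work is. One direction is Theorem~\ref{thm:s=1-r=1}(2c) with $T=3$: a $3$-alternating network with arrow diagram $(\to,\la,\to,\la)$ admits $\lceil 3/2\rceil = 2$ exponentially stable positive steady states, hence is multistable. For the converse I would argue from the steady-state polynomial. Grouping the reactions of a $1$-species network by reactant complex, the positive steady states for a fixed choice of rate constants are the positive roots of $f(x)=\sum_{i=1}^{m}c_i x^{a_i}$, where $a_1<\dots<a_m$ are the distinct reactant coefficients, $c_i>0$ if the arrow-diagram entry $\rho_i$ is $\to$, $c_i<0$ if $\rho_i$ is $\la$, and $\sgn(c_i)$ is unrestricted if $\rho_i$ is $\lradot$; moreover, whenever $c_i>0$ there is a reaction $a_iA\to bA$ with $b>a_i$, and whenever $c_i<0$ there is one with $b<a_i$. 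Since $\dim\St=1$, a positive steady state $x^*$ is stable precisely when $f'(x^*)<0$, i.e.\ when $f$ passes from positive to negative at $x^*$; because the sign changes of $f$ along $(0,\infty)$ alternate in type, $k$ stable steady states force at least $k$ sign changes of $f$ of type $(+,-)$ and hence, by Descartes' rule, at least $2k-1$ sign changes in the sequence $(c_1,\dots,c_m)$ (at least $2k$ if $f<0$ near $0$). Two consequences follow: (i) a network with at most three reactions has $m\le 3$, so $f$ has at most two positive roots, and since the sign of the derivative alternates at consecutive simple positive roots, at most one of them is stable --- such a network is not multistable; and (ii) a multistable $1$-species network (take $k=2$) has $(c_1,\dots,c_m)$ containing a subsequence of signs $(+,-,+,-)$, and selecting, for those four indices, a forward reaction at each $+$-index and a backward reaction at each $-$-index yields a $3$-alternating subnetwork with arrow diagram $(\to,\la,\to,\la)$. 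By (i), every proper embedded network of a $(\to,\la,\to,\la)$ network fails to be multistable, so these networks are embedding-minimal; by (ii), any embedding-minimal multistable $1$-species network must coincide with such a network. Together these give Part~3.

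The main obstacle is step (ii): extracting a $(\to,\la,\to,\la)$ subnetwork rather than the superficially equally plausible $(\la,\to,\la,\to)$ one. The asymmetry is real --- a $(\la,\to,\la,\to)$ network has $\capStable = 1$ --- so the argument must use the behavior of $f$ near the endpoints of $(0,\infty)$: one distinguishes cases according to the sign of $f$ near $0$, observes that the sign changes of $f$ begin with a $(+,-)$ transition exactly when the first nonzero $c_i$ is positive, and checks that in each case the required number of $(+,-)$ transitions forces $(c_1,\dots,c_m)$ to contain a run of signs $+,-,+,-$ in that order. The remaining ingredients --- the reaction-grouping formula for $f$, the sign implications of the arrow diagram, and Descartes' rule --- are routine.
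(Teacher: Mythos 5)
Your argument is correct and is essentially the proof the paper intends: the paper asserts this corollary follows ``directly'' from Theorem~\ref{thm:s=1-r=1}, and your parts 1--2 are exactly that reading, while your part 3 correctly supplies, using the same Descartes/sign-change machinery as Lemma~\ref{lem:T-alternating}, the one converse the theorem does not literally state (that a multistable one-species network must contain a $(\to,\la,\to,\la)$ subnetwork, and that a one-species network with at most three reactions admits at most one stable steady state). The only nitpick is that in your step (i) you should also dispose of the case $f\equiv 0$, which yields only degenerate (hence non-stable) steady states, so the conclusion is unaffected.
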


Now we consider at-most-bimolecular networks.  
A complex is {\em at-most-bimolecular} if it is of the form $0, A, 2A$, or $A+B$.
In other words, the sum of the stoichiometric coefficients over all the species in the complex is at most 2. A reaction network is itself {\em at-most-bimolecular} if all its complexes are at-most-bimolecular. 

\begin{corollary} \label{cor:at-most-bi-s=1-r=1}
Suppose that $G$ is an at-most-bimolecular reaction network. If $G$ has only one reaction or only one species, then $G$ is {\em not} nondegenerately multistationary (i.e., $\capNPSS(G) \le 1$), and thus is {\em not} multistable. 
\end{corollary}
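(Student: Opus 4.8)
The plan is to deduce both cases from Theorem~\ref{thm:s=1-r=1}, with the at-most-bimolecular hypothesis only doing work in the one-species case. First, if $G$ consists of a single reaction, then Theorem~\ref{thm:s=1-r=1}(1) already gives that $G$ is not multistationary, hence a fortiori $\capNPSS(G)\le 1$; no bimolecularity is needed here.

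Next, suppose $G$ has only one species $A$. By the concluding ``Thus'' clause of Theorem~\ref{thm:s=1-r=1}(2), $G$ is nondegenerately multistationary if and only if $G$ contains a $2$-alternating subnetwork, so it suffices to rule this out. A $2$-alternating subnetwork has (by Definition~\ref{def:2-sign-change}) exactly three reactions and three distinct reactant complexes $a_1A, a_2A, a_3A$ with $a_1<a_2<a_3$, and arrow diagram either $(\to,\la,\to)$ or $(\la,\to,\la)$. Since $G$ is at-most-bimolecular and has the single species $A$, every complex of $G$ lies in $\{0,A,2A\}$; in particular there are at most three distinct reactant complexes, and having three of them forces $(a_1,a_2,a_3)=(0,1,2)$. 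Now I would check, directly from Definition~\ref{def:arrow-diagram}, that neither arrow diagram can be realized: if it is $(\to,\la,\to)$, then $\rho_3={\to}$ would require a reaction $2A\to bA$ with $b>2$, violating the bimolecular bound; if it is $(\la,\to,\la)$, then $\rho_1={\la}$ would require a reaction $0\to bA$ with $b<0$, which is impossible. Hence $G$ has no $2$-alternating subnetwork, and so $\capNPSS(G)\le 1$.

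Finally, since $\capStable(G)\le\capNPSS(G)$ for every network, the bound $\capNPSS(G)\le 1$ immediately yields that $G$ is not multistable. The whole argument is a short combinatorial check rather than a computation; the only point requiring any care is the observation that the three reactant coefficients of a $2$-alternating subnetwork would have to be exactly $0,1,2$ under the bimolecular constraint, after which the incompatibility with both admissible arrow diagrams is immediate.
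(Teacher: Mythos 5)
Your proof is correct and follows essentially the same route as the paper: both reduce via Theorem~\ref{thm:s=1-r=1} to the nonexistence of a $2$-alternating subnetwork, observe that bimolecularity forces the three reactant coefficients to be $(0,1,2)$, and then derive a contradiction from the arrow diagram (the paper notes the reaction from $0$ forces $(\to,\la,\to)$ and hence a product $n_3A$ with $n_3>2$, which is exactly your two-case check phrased in one step). No gaps.
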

\begin{proof}
By Theorem \ref{thm:s=1-r=1} (part 1), if $G$ has only one reaction, it is not multistationary. If $G$ has only one species, then it is nondegenerately multistationary if and only if $G$ has a 2-alternating subnetwork by Theorem \ref{thm:s=1-r=1} (part 2). Assuming that $G$ is nondegenerately multistationary, $G$ must have a 2-alternating subnetwork $\{m_1 A \to n_1 A, ~m_2 A \to n_2 A, ~m_3 A \to n_3 A \}$ with $0 \le m_1 < m_2 < m_3 \le 2$. The only possible choice is 
$(m_1,m_2,m_3)=(0,1,2)$. 
It follows that $n_1 > m_1$,
so the arrow diagram of $G$ must be $(\to, \la, \to)$.
Hence, $n_3 > m_3 =2$, a contradiction. 
So, either $G$ is {\em not} at-most-bimolecular, or $G$ is {\em not} nondegenerately multistationary.  
\end{proof}

\subsection{Proof of Theorem~\ref{thm:s=1-r=1}} \label{sec:proof-s=1-r=1}
The proof of Theorem~\ref{thm:s=1-r=1} requires two preliminary results.

\begin{definition} \label{def:sign}
The {\em sign} of a real number $a \in \mathbb{R}$ is
\[
 {\rm sign}(a) :=
  \begin{cases} 
      + & \text{ if $a>0$} \\
      0 & \text{ if $a=0$} \\
      - & \text{ if $a<0$}~. \\
  \end{cases}
\]
We define the {\em sign} of a vector $x \in \mathbb{R}^n$ component-wise: ${\rm sign}(x):=\left(  {\rm sign}(x_1),   {\rm sign}(x_2), \dots   {\rm sign}(x_n) \right) \in  \{+,0,-\}^n$.  
The {\em number of sign changes} in such a vector of signs $v \in \{+,0,-\}^n$ is obtained by first removing all 0's from $v$ and then counting the number of times in the resulting vector a coordinate switches from $+$ to $-$ or from $-$ to $+$.  For instance, the vector $(+,-,-,0,-,0,+)$ has two sign changes.
\end{definition}

\begin{lemma} \label{lem:T-alternating}
Let $G$ be a 1-species network, and let $T$ be a positive integer with $T \geq 2$.
\begin{enumerate}
	\item If there exists a choice of positive rate constants so that for the resulting mass-action ODE 
\begin{align} \label{eq:1-vble-ode}
	\dot x_A ~=~ c_0 + c_1 x_A + \cdots + c_m (x_A)^m~,
\end{align}
the vector of coefficients $\left( {\rm sign}(c_0), {\rm sign}(c_1), \dots, {\rm sign}(c_m) \right)$ has at least $T$ sign changes,
then 
 $G$ contains a T-alternating subnetwork.
	\item If there exists a choice of positive rate constants so that the resulting mass-action ODE~\eqref{eq:1-vble-ode} is the zero polynomial ($c_i=0$ for all $i$), then 
the arrow diagram of $G$ has the form $(\lradot, \dots, \lradot)$.
\end{enumerate}
\end{lemma}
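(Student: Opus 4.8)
The plan is to translate each statement about the polynomial $\dot x_A = c_0 + c_1 x_A + \cdots + c_m x_A^m$ into a statement about the reaction network by unpacking how each coefficient $c_k$ is built from the reactions. For a 1-species network, a reaction $aA \to bA$ contributes $\kappa_{ab}(b-a)$ to the coefficient of $x_A^a$ in $\dot x_A$. Hence, if $a_1 < a_2 < \cdots < a_M$ are the distinct reactant coefficients, then for each reactant level $a_i$ we have $c_{a_i} = \sum_{aA \to bA \in \RR,\, a = a_i} \kappa_{ab}(b-a)$, and $c_k = 0$ for any $k$ that is not one of the $a_i$. The first thing I would record is the sign constraint this imposes: if $\rho_i = \;\to$ then every summand $\kappa_{ab}(b-a)$ with $a = a_i$ is positive, so $c_{a_i} > 0$ regardless of the rate constants; similarly $\rho_i = \;\la$ forces $c_{a_i} < 0$; and only when $\rho_i = \;\lradot$ can $c_{a_i}$ take either sign (or vanish) depending on $\kappa$.

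\emph{Part 2 (the zero-polynomial case).} Suppose positive rate constants exist making $\dot x_A \equiv 0$, i.e.\ $c_{a_i} = 0$ for every reactant level $a_i$. By the sign constraint just noted, a level with $\rho_i \in \{\to, \la\}$ would force $c_{a_i} \neq 0$; hence every $\rho_i$ must be $\lradot$, which is exactly the claim that the arrow diagram has the form $(\lradot, \dots, \lradot)$. This direction is short.

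\emph{Part 1 (the sign-change case).} Now suppose positive rate constants exist so that $(\sgn(c_0), \dots, \sgn(c_m))$ has at least $T$ sign changes. After deleting zero entries, pick $T+1$ indices $0 \le k_0 < k_1 < \cdots < k_T \le m$ with $\sgn(c_{k_0}), \dots, \sgn(c_{k_T})$ strictly alternating in sign; each $k_j$ is necessarily one of the reactant levels $a_{i_j}$, since $c_k = 0$ off those levels. The goal is to extract from $G$ a 2-alternating-style subnetwork on these $T+1$ reactant levels whose arrow diagram alternates. For each chosen level $a_{i_j}$ with $c_{a_{i_j}} > 0$: since $c_{a_{i_j}} = \sum \kappa_{ab}(b-a) > 0$, at least one reaction $a_{i_j} A \to bA$ with $b > a_{i_j}$ occurs in $G$ — keep one such reaction, an ``$\to$'' at level $a_{i_j}$. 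Symmetrically, if $c_{a_{i_j}} < 0$, keep one reaction $a_{i_j} A \to bA$ with $b < a_{i_j}$, a ``$\la$'' at level $a_{i_j}$. The resulting subnetwork $N$ has exactly $T+1$ reactions, reactant levels $a_{i_0} < \cdots < a_{i_T}$, and by construction the arrow entry at level $a_{i_j}$ is $\to$ exactly when $c_{a_{i_j}} > 0$ and $\la$ exactly when $c_{a_{i_j}} < 0$ (no $\lradot$ arises, since we kept a single reaction at each level). Because the signs $\sgn(c_{a_{i_j}})$ strictly alternate in $j$, the arrow diagram of $N$ strictly alternates, so $N$ is a $T$-alternating subnetwork of $G$ by Definition~\ref{def:2-sign-change}.

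\emph{Main obstacle.} The argument is essentially bookkeeping, and the one subtlety to get right is the possible mismatch between the index $k$ in the polynomial (a monomial degree) and the reactant complex it corresponds to: I must be careful that the $T$ alternating nonzero coefficients land on $T+1$ \emph{distinct} reactant levels (they do, automatically, since they sit at distinct degrees $k_0 < \cdots < k_T$ and each nonzero $c_k$ forces $k$ to be a reactant level), and that pruning to a single reaction at each selected level both preserves the sign of that coefficient's contribution and eliminates any $\lradot$ ambiguity. Once that correspondence is nailed down, the $T$-alternating subnetwork falls out directly, and there is no analytic or combinatorial difficulty beyond it.
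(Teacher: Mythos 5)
Your proposal is correct and follows essentially the same route as the paper's proof: both use the coefficient formula $c_{m}=\sum_{\{j\,:\,p_j=m\}}\kappa_j(q_j-m)$ to select, for each of the $T+1$ alternating nonzero coefficients, a single reaction whose direction matches that coefficient's sign (giving the $T$-alternating subnetwork), and for part 2 both observe that a vanishing coefficient at a reactant level forces summands of both signs, hence a $\lradot$ entry. No gaps; the bookkeeping point you flag (distinct degrees give distinct reactant levels) is handled correctly.
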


\begin{proof}
Letting $r$ denote the number of reactions of $G$, we write the reactions as $p_j A \to q_j A$, for $j=1,2, \dots, r$.   Therefore, the coefficient $c_{m}$ in the mass-action ODE polynomial~\eqref{eq:1-vble-ode} is:
\begin{align} \label{eq:coef}
	c_{m} ~ =~ \sum_{ \{j \in \RR  \mid p_j = m \} } \kappa_j (q_j - m)~.
\end{align}

Let $\kappa_1, \kappa_2, \dots, \kappa_r>0$ be rate constants that satisfy the hypothesis of part 1 of the lemma.  Thus, there exist $T+1$ indices $0 \leq m_1 < m_2 <  \cdots < m_{T+1}$ such that the vector of coefficients $\left( {\rm sign}(c_{m_1}), {\rm sign}(c_{m_2}), \dots, {\rm sign}(c_{m_{T+1}}) \right)$ of the 
resulting mass-action ODE~\eqref{eq:1-vble-ode} is alternating: either $(+,-,+, \cdots)$ or $(-,+,-,\cdots)$.  
Examining the coefficient~$c_{m_i}$ as in~\eqref{eq:coef}, it follows that
there exists a reaction $m_i A \to q_j A$ with ${\rm sign}(c_{m_i}) = {\rm sign}(q_j - m_i)$.  
Thus, those $T+1$ reactions (from $i=1,2,\dots,T+1$) form a $T$-alternating subnetwork of $G$. 

Now let $\kappa_1, \kappa_2, \dots, \kappa_r>0$ be rate constants that satisfy the hypothesis of part 2 of the lemma; thus, $c_i=0$ for all $i$.  Consider any reactant complex $p_k A$.  The corresponding sum~\eqref{eq:coef} for $c_{p_k}$ 
must have at least one positive and at least one negative summand, 
so the corresponding entry of the arrow diagram is $\lradot$.  So, every entry in the arrow diagram of $G$ is $\lradot$.
\end{proof}

\noindent 
Both converses of Lemma~\ref{lem:T-alternating} hold; these are essentially contained in the proof of Theorem~\ref{thm:s=1-r=1}.

Lemma~\ref{lem:descartes-achieved} below, due to Grabiner~\cite{grabiner}, states that for polynomials whose coefficients alternate in sign, the bound from Descartes' rule can be achieved by some choice of coefficients. 

\begin{proposition}[Descartes' rule of signs -- bound only] \label{pro:Descartes_original}
Given a nonzero univariate
real polynomial $f(x) = c_0 + c_1 x + \cdots + c_r x^r$,
the number of positive real roots of $f$, counted with multiplicity, is bounded above by the number of sign variations
in the ordered sequence of the coefficients ${\rm sign}(c_0),\dots, {\rm sign}(c_r)$,
i.e., discard the $0$'s in this sequence and then count the number of times two consecutive signs differ. 
\end{proposition}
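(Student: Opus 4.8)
The plan is to use the classical factoring proof. Throughout, write $v(f)$ for the number of sign variations in the coefficient sequence of a nonzero $f \in \mathbb{R}[x]$ and $p(f)$ for its number of positive real roots counted with multiplicity; we must show $p(f) \le v(f)$.

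The whole argument reduces to a single lemma: \emph{for every $a>0$ and every nonzero $g\in\mathbb{R}[x]$, one has $v\bigl((x-a)g\bigr)\ge v(g)+1$.} Granting this, list the positive real roots of $f$ with multiplicity as $a_1,\dots,a_p$ (so $p=p(f)$) and factor $f(x)=(x-a_1)\cdots(x-a_p)\,h(x)$, where $h=f/\prod_j(x-a_j)$ is again a nonzero real polynomial, since each $a_j$ is a real root of $f$ of the appropriate multiplicity, so the product divides $f$ over $\mathbb{R}$. Applying the lemma $p$ times yields $v(f)\ge v(h)+p\ge p$ because $v(h)\ge 0$. Note this requires no preliminary reductions (e.g.\ no assumption that $f(0)\ne 0$).

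To prove the lemma, write $g=\sum_i b_ix^i$ with support contained in an interval of degrees $[p,q]$, and group its nonzero coefficients, read by increasing degree, into maximal blocks $B_1,\dots,B_m$ of constant sign $\epsilon_1,\dots,\epsilon_m$, where $\epsilon_{i+1}=-\epsilon_i$; thus $v(g)=m-1$. Let $t_i$ be the largest degree occurring in $B_i$. The degree-$j$ coefficient of $(x-a)g$ is $d_j:=b_{j-1}-ab_j$ (with $b_j:=0$ outside $[p,q]$). One checks case-by-case, according to whether $b_{t_i+1}$ vanishes, that $d_{t_i+1}$ has sign $\epsilon_i$ for every $i$: if $b_{t_i+1}=0$ then $d_{t_i+1}=b_{t_i}$; and if $b_{t_i+1}\ne 0$ then $t_i+1$ is the bottom degree of $B_{i+1}$, so $b_{t_i+1}$ has sign $-\epsilon_i$ and both summands of $d_{t_i+1}=b_{t_i}-ab_{t_i+1}$ have sign $\epsilon_i$. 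Meanwhile the lowest coefficient $d_p=-ab_p$ has sign $-\epsilon_1$. Since $p<t_1+1<t_2+1<\cdots<t_m+1$, the selected coefficients $d_p,d_{t_1+1},\dots,d_{t_m+1}$ appear in order of increasing degree with strictly alternating signs $-\epsilon_1,\epsilon_1,-\epsilon_1,\dots$, so they alone exhibit $m$ sign variations. Hence $v\bigl((x-a)g\bigr)\ge m=v(g)+1$.

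The reduction in the second paragraph is entirely routine. The only place requiring genuine care is the bookkeeping with zero coefficients of $g$ in the lemma's case analysis; that is the step I would write out in full detail.
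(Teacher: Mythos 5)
Your argument is correct, and it is the standard factoring proof of Descartes' rule. For context: the paper does not prove Proposition~\ref{pro:Descartes_original} at all --- it is stated as classical background (the only proof the paper supplies in this vicinity is for Lemma~\ref{lem:descartes-achieved}, the sharpness statement, which is delegated to Grabiner) --- so there is no proof of record to compare against. Your reduction to the key lemma $v\bigl((x-a)g\bigr)\ge v(g)+1$ for $a>0$ is sound, and the block bookkeeping checks out: with $t_i$ the top degree of the $i$-th sign block, $d_{t_i+1}=b_{t_i}-ab_{t_i+1}$ indeed has sign $\epsilon_i$ in both cases (when $b_{t_i+1}\ne 0$ it opens $B_{i+1}$ by maximality of $B_i$, so both summands carry sign $\epsilon_i$), and $d_p=-ab_p$ contributes the extra leading sign $-\epsilon_1$. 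Two small points you should make explicit when writing this up: (i) you need $p$ to be the \emph{minimum} of the support of $g$ (not merely a lower bound for it), since the conclusion that $d_p$ has sign $-\epsilon_1$ uses $b_{p-1}=0$ and $b_p\ne 0$; and (ii) the final step silently invokes the fact that the number of sign variations of a sequence is at least that of any subsequence of its nonzero entries --- true and elementary (between two selected entries of opposite sign there must be at least one sign change in the full sequence), but worth a sentence.
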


\begin{lemma}[Bound in Descartes' rule of signs achieved~\cite{grabiner}] \label{lem:descartes-achieved}
For $0 < l_1 < l_2 < \ldots < l_q$, consider the polynomial 
\begin{align*}
P(x) ~:=~ 1- k_1x^{l_1} + k_2x^{l_2} - \cdots + (-1)^q k_q x^{l_q}~.
\end{align*}
 Then there exist $k_1 >0,$ $k_2>0, \ldots, k_q >0$ such that $P(x)$ has $q$ positive roots $0<x^*_1<x^*_2<\cdots<x^*_q$, and 
$P'(x_i^*)<0$ for $i$ odd and $P'(x_i^*)>0$ for $i$ even. 
\end{lemma}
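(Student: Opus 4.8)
The plan is to construct the positive roots first and then recover the coefficients, rather than the reverse. Fix any $q$ target values $0 < x_1^* < x_2^* < \cdots < x_q^*$ (for instance $x_i^* = i$). We want $k_1, \ldots, k_q > 0$ so that $P(x_i^*) = 0$ for all $i$; since $P$ has constant term $1$ and exactly $q$ unknown coefficients, this is a linear system $M \vec k = \vec 1$, where the rows of the $q \times q$ matrix $M$ are indexed by the roots $x_i^*$ and the columns by the exponents $l_1 < \cdots < l_q$, with sign-adjusted entries $M_{ij} = (-1)^{j+1}(x_i^*)^{l_j}$ (so that $1 - \sum_j k_j (-1)^{j+1} (x_i^*)^{l_j} \cdot (-1)^{\,?}$ lines up with the alternating form in the statement). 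The key structural fact is that a matrix of the form $\left( (x_i^*)^{l_j} \right)_{i,j}$ with positive $x_i^*$ and strictly increasing exponents $l_j$ is \emph{totally positive} — this is the classical generalized Vandermonde / Cauchy–Binet observation. So the first main step is to record total positivity of this generalized Vandermonde matrix and use it (via Cramer's rule: each $k_j$ is a ratio of a minor-type determinant to the full determinant, and the sign bookkeeping from the $(-1)^{j+1}$ factors works out) to conclude that the unique solution has all $k_j > 0$. This is the step I expect to be the main obstacle: getting the sign bookkeeping exactly right so that ``totally positive coefficient matrix'' translates into ``all $k_j$ strictly positive,'' and confirming the solution is unique (nonvanishing determinant) so that $P$ is well-defined.

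Once the $k_j > 0$ are in hand, $P$ is a nonzero polynomial whose coefficient sequence has exactly $q$ sign changes (the signs are $+, -, +, \ldots, (-1)^q$, with no zeros among the $q+1$ relevant coefficients), so by Proposition~\ref{pro:Descartes_original} it has \emph{at most} $q$ positive roots counted with multiplicity. But we have exhibited $q$ distinct positive roots $x_1^*, \ldots, x_q^*$; hence these are exactly the positive roots, each is simple, and there are no others. Simplicity is what we need for the derivative sign conditions: since $P$ has $q+1$ coefficients and could in principle have up to $l_q$ complex roots, I would note that $P$ factors as $P(x) = (-1)^q k_q \prod_{i=1}^{q}(x - x_i^*) \cdot Q(x)$ where $Q$ collects the remaining (necessarily non-positive-real, possibly complex or zero) roots, and $Q(x) > 0$ for all $x \ge 0$ — this follows because $Q$ has no positive real roots, $Q(0) = P(0) / \big((-1)^q k_q \prod (-x_i^*)\big) = 1 / (k_q \prod x_i^*) > 0$, and a real polynomial with no positive roots and positive value at $0$ stays positive on $[0,\infty)$.

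The final step is the derivative signs. At a simple root $x_i^*$, $P'(x_i^*) = (-1)^q k_q \, Q(x_i^*) \prod_{l \ne i}(x_i^* - x_l^*)$. Since $(-1)^q k_q \prod_l(x - x_l^*)$ is (up to the positive factor $Q$) the leading behavior, the sign of $P'(x_i^*)$ is the sign of $(-1)^q \prod_{l \ne i}(x_i^* - x_l^*)$; the product $\prod_{l \ne i}(x_i^* - x_l^*)$ has exactly $q - i$ negative factors (those with $l > i$), so its sign is $(-1)^{q-i}$, giving $\operatorname{sign} P'(x_i^*) = (-1)^q (-1)^{q-i} = (-1)^{i}$. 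Thus $P'(x_i^*) < 0$ for $i$ odd and $> 0$ for $i$ even, as claimed. (Equivalently and more elementarily: $P$ goes from $P(0) = 1 > 0$ down through its first root, so $P' < 0$ there, and consecutive simple roots of a polynomial have opposite-sign derivatives, which propagates the alternation.) I would present the elementary ``sign alternation of consecutive simple roots'' argument as the main line and keep the factorization computation as the underlying justification. The only genuinely delicate point remains the total-positivity / Cramer's-rule argument in the first paragraph; everything after that is bookkeeping.
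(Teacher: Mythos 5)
Your proof is correct, but it takes a genuinely different route from the paper, which simply defers to Grabiner's paper: ``This follows immediately from [Theorem 1, grabiner] and its proof.'' Grabiner's construction is inductive and perturbative --- one adds the terms $(-1)^jk_jx^{l_j}$ one at a time with rapidly decreasing coefficients, so that each new term barely disturbs the existing roots while creating one new root far to the right. Your argument instead prescribes the roots $x_1^*<\cdots<x_q^*$ up front and solves for the coefficients, which is arguably cleaner and gives the extra freedom of placing the roots wherever one likes. The step you flagged as delicate does go through: writing $V(t_1,\dots,t_q):=\det\bigl((x_i^*)^{t_j}\bigr)$, Cramer's rule applied to $M\vec k=\vec 1$ with $M_{ij}=(-1)^{j+1}(x_i^*)^{l_j}$ gives, after factoring the column signs out of numerator and denominator and moving the all-ones column (exponent $0$) into increasing position at the cost of $(-1)^{j-1}$,
\begin{align*}
k_j~=~\frac{(-1)^{j-1}}{(-1)^{j+1}}\cdot\frac{V(0,l_1,\dots,\widehat{l_j},\dots,l_q)}{V(l_1,\dots,l_q)}~=~\frac{V(0,l_1,\dots,\widehat{l_j},\dots,l_q)}{V(l_1,\dots,l_q)}~>~0~,
\end{align*}
using the classical fact that generalized Vandermonde determinants with positive increasing nodes and strictly increasing exponents are positive. (That fact is usually proved via the ``at most $q-1$ positive roots for a nonzero combination of $q$ monomials'' bound, i.e., the easy direction of Descartes, so there is no circularity with the direction being proved here.) Everything downstream --- Descartes forcing all $q$ exhibited roots to be simple and exhaustive, and the alternation $\operatorname{sign}P'(x_i^*)=(-1)^i$ from $P(0)=1>0$ together with the fact that consecutive simple roots carry opposite derivative signs --- is correct; the elementary alternation argument you propose as the main line is all that is needed, and the factorization through $Q$ is a valid but dispensable backup. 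The trade-off between the two approaches: yours imports the generalized Vandermonde positivity as a black box but is a clean two-step argument; Grabiner's is self-contained and elementary but gives no control over the location of the roots.
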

\begin{proof}
This follows immediately from~\cite[Theorem~1]{grabiner} and its proof.
\end{proof}

We now prove our classification theorem for networks with one species or one reaction.
\begin{proof}[Proof of Theorem~\ref{thm:s=1-r=1}]
Part 1 of the theorem follows from Lemma~\ref{lem:thm:def-0} (the deficiency zero theorem): such a network has deficiency zero.

To prove $\Rightarrow$ of part 2, assume that $G$ is multistationary.   Thus, there exist rate constants $\kappa_1, \kappa_2, \dots, \kappa_r>0$ so that the resulting mass-action ODE $\frac{dx_A}{dt}=:f(x_A)$, as in equation~\eqref{eq:1-vble-ode}, is a polynomial with multiple positive roots.  If $f$ is the zero polynomial, then by part~2 of Lemma~\ref{lem:T-alternating}, 
the arrow diagram of $G$ has the form $(\lradot, \dots, \lradot)$.
If $f$ is not the zero polynomial, then $\Rightarrow$ of part 2 follows from Lemma~\ref{lem:T-alternating} (part~1) and Descartes' rule of signs (Proposition~\ref{pro:Descartes_original}).  
Similarly, those two results prove $\Rightarrow$ of part 2(b).

For 2(a), the only polynomial with infinitely many positive roots is the zero polynomial, so if $G$ admits infinitely many positive steady states, they are degenerate and the corresponding mass-action ODE is the zero polynomial.  Thus, $\Rightarrow$ of 2(a) follows from part~2 of Lemma~\ref{lem:T-alternating}.  For $\Leftarrow$ of 2(a), if
the arrow diagram of $G$ has the form $(\lradot, \dots, \lradot)$,
 then each coefficient in the mass-action ODE is a sum of negative and positive summands (as in equation~\eqref{eq:coef}).  Therefore, the reaction rate constants $\kappa_j>0$ can be chosen so that each coefficient is zero, and therefore $G$ admits infinitely many degenerate positive steady states.

For 2(c) and $\Leftarrow$ of 2(b), we first show that every $T$-alternating network admits $T$ nondegenerate positive steady states, of which $\lceil {T}/{2} \rceil$ are exponentially stable 
when the arrow diagram is $(\to,\la,\to,\ldots)$,
or $\lfloor {T}/{2} \rfloor$ 
 when the arrow diagram is $(\la,\to,\la,\ldots)$.
For such a network, the mass-action ODE~\eqref{eq:ODE-mass-action} is:
 \begin{align*}
 \frac{dx_A}{dt} ~ = ~ \kappa_1 (n_1-m_1) (x_A)^{m_1} +  \kappa_2 (n_2-m_2) (x_A)^{m_2} + \cdots +  \kappa_{T+1} (n_{T+1}-m_{T+1}) (x_A)^{m_{T+1}}~,
 \end{align*}
where $m_1 < m_2 < \cdots < m_{T+1}$ and the vector of signs of the coefficients is either $(+,-,+, \cdots)$ (the  $(\to, \la, \to, \ldots)$ case) or $(-,+,-, \cdots)$ (in the other case).  For the $(+,-,+, \cdots)$  case, via the coordinate change $\tilde \kappa_i := \kappa_i \left| n_i - m_i \right|$, we need only find $\tilde \kappa_i >0$ such that 
\begin{align} \label{eq:reduced-eq}
	f(x)~:=~
	 \tilde \kappa_1 x^{m_1} - 
	 \tilde \kappa_2 x^{m_2}  + \cdots + (-1)^T 
	 \tilde \kappa_{T+1} x^{m_{T+1}} ~=~0
\end{align}
has $T$ positive roots $x \in \mathbb{R}_{>0}$, of which $\lceil T/2 \rceil$ satisfy $f'(x)<0$.  Indeed, this follows from Lemma~\ref{lem:descartes-achieved}.
Finally, the proof for the $(-,+,-, \cdots )$ case proceeds similarly ($T$ positive roots, and at $\lfloor T/2 \rfloor$ of them,  the derivative is negative).  Now, part 2(c) and $\Leftarrow$ of part 2(b) follow immediately from Lemma~\ref{lem:lift}.  Finally, $\Rightarrow$ of part 2 now follows from parts 2(b--c).
\end{proof}

\section{Networks with two reactions and two species} \label{sec:main-theory}
The previous section answered the questions of multistationarity and multistability 
when there is only one reaction or one species; here, Theorem~\ref{thm:s=2-r=2} answers these questions when there are two reactions and two species. 
Furthermore, Theorem~\ref{thm:s=2-r=2-alternate} characterizes the number and stability of these steady states.
Moreover, this criterion for multistationarity is geometric, via `box diagrams', and will in turn
generalize to the case of more species (Theorem~\ref{thm:speculation}).


Before stating results, we recall that only consistent networks admit positive steady states:

\begin{lemma} \label{lem:2-rxn-dep}
Let $G$ be a network with exactly two reactions, denoted by $y \to y'$ and $\widetilde y \to \widetilde y'$.   
If $G$ admits at least one positive steady state, then the reaction vectors are (strictly) negative scalar multiples of each other: $y'-y =- \lambda (\widetilde y' - \widetilde y)$ for some $\lambda>0$.
\end{lemma}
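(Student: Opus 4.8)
The plan is to apply Lemma~\ref{lem:consistent}: if $G$ admits a positive steady state, then $G$ is consistent, meaning its reaction vectors are positively linearly dependent. With exactly two reactions and reaction vectors $v := y'-y$ and $\widetilde v := \widetilde y' - \widetilde y$, positive linear dependence means there exist $\mu_1, \mu_2 \geq 0$, not both zero, with $\mu_1 v + \mu_2 \widetilde v = 0$. I would first rule out the degenerate sub-cases: neither $v$ nor $\widetilde v$ can be zero (a reaction vector is $y'-y$ with $y \neq y'$ by Definition~\ref{def:crn}), and if only one of the $\mu_i$ were zero we would get $v = 0$ or $\widetilde v = 0$, a contradiction. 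Hence both $\mu_1, \mu_2 > 0$, and so $v = -(\mu_2/\mu_1)\widetilde v$, giving the claim with $\lambda = \mu_2/\mu_1 > 0$.

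An alternative, more self-contained route avoids invoking consistency and instead argues directly from the steady-state equations. At a positive steady state $x^*$, equation~\eqref{eq:ODE-mass-action} gives $\kappa\, (x^*)^{y} v + \widetilde\kappa\, (x^*)^{\widetilde y}\, \widetilde v = 0$. Since $x^* \in \R^s_{>0}$ and the rate constants are positive, the two scalar coefficients $\kappa (x^*)^{y}$ and $\widetilde\kappa (x^*)^{\widetilde y}$ are strictly positive; writing $\lambda := \kappa (x^*)^{y} / (\widetilde\kappa (x^*)^{\widetilde y}) > 0$ yields $v = -\lambda \widetilde v$ immediately, once one observes $v \neq 0$ and $\widetilde v \neq 0$ so the relation is nontrivial. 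This is essentially the proof of Lemma~\ref{lem:consistent} specialized to two reactions, and I would likely present this version since it is short and explicit.

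I do not anticipate a genuine obstacle here — the statement is a direct corollary of consistency. The only point requiring a word of care is the emphasis on \emph{strictly} negative (i.e., $\lambda > 0$ and not merely $\lambda \geq 0$ or $\lambda$ allowed to be $0$), which is why it matters that reaction vectors are nonzero and that both coefficients in the steady-state relation are strictly positive; I would make sure to spell that out rather than wave at it. I would therefore write the proof in two or three sentences: invoke positivity of $x^*$ and the $\kappa$'s to get the two-term steady-state relation with positive coefficients, note both reaction vectors are nonzero, and conclude $y' - y = -\lambda(\widetilde y' - \widetilde y)$ with $\lambda > 0$.
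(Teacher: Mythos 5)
Your first route is exactly the paper's proof: invoke Lemma~\ref{lem:consistent} to get $\lambda_1(y'-y)+\lambda_2(\widetilde y'-\widetilde y)=0$ with $\lambda_1,\lambda_2>0$ and set $\lambda=\lambda_2/\lambda_1$; your second, ``self-contained'' route is just that same argument with the proof of Lemma~\ref{lem:consistent} inlined for two reactions. Both are correct, and your extra care about why the scalar is strictly positive (nonzero reaction vectors, strictly positive monomials and rate constants) is a reasonable elaboration of what the paper leaves implicit.
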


\begin{proof}
$G$ admits a positive steady state, so $G$ is consistent (Lemma~\ref{lem:consistent}). 
Thus, $\lambda_1 (y'-y) + \lambda_2 (\widetilde y' - \widetilde y)=(0,0)$ for some $\lambda_1,\lambda_2>0$, and, hence, $\lambda= \lambda_2/\lambda_1$ verifies the result.
\end{proof}

\subsection{Main results} \label{sec:main-results-s=r=2}

To state Theorem~\ref{thm:s=2-r=2}, we must define reactant polytopes (Newton polytopes) and box diagrams.

\begin{definition}[\cite{GMS2}] \label{def:NP}
The {\em reactant polytope} of a network $G$ is the convex hull of (i.e., the smallest convex set containing) the reactants of $G$ (in $\mathbb{R}^s$, where $s$ is the number of species).
\end{definition}

\begin{definition} \label{def:box}
Let $G$ be a network with exactly two species and two reactions, $y \to y'$ and $\tilde y \to \tilde y'$, such that the reactant vectors differ in both coordinates (i.e., writing $y=(y_A, y_B)$ and $\tilde y=(\tilde y_A, \tilde y_B)$, then both $y_A \neq \tilde y_A$ and  $y_B \neq \tilde y_B$).  The {\em \bd} of the network $G$ is the rectangle in $\mathbb{R}^2$ for which 
	\begin{enumerate}
	\item the edges are parallel to the axes of $\mathbb{R}^2$, and
	\item the reactants $y$ and $y'$ are two opposite corners of the rectangle.
	\end{enumerate}	
\end{definition}

\begin{remark}
The box diagram is the smallest rectangle containing the reactant polytope. 
\end{remark}

It will be useful to depict a box diagram together with the reaction vectors and the reactant polytope (which in this case is the diagonal of the box that connects the two reactants).  For example, consider the network $\{ B \to A , ~ A+2B \to 3B \}$, which is a subnetwork of a bistable network for modeling apoptosis due to Ho and Harrington~\cite{HoHarrington}.  Its box diagram is:
\begin{center}
	\begin{tikzpicture}[scale=.7]
	\draw (-0.5,0) -- (3,0);
	\draw (0,-.5) -- (0,3);
	\draw [->] (0,1) -- (1,0.07);
	\draw [->] (1,2) -- (0.07,3);
	\path [fill=gray] (0,1) rectangle (1,2);
	\draw (0,1) --(1,2); 
    \node [left] at (0,1) {$B$};
    \node [above right] at (1,0) {$A$};
    \node [right] at (1,2) {$A+2B$};
    \node [right] at (0,3) {~$3B$};
	\end{tikzpicture}
\end{center}

The aim of Theorem~\ref{thm:s=2-r=2} below is to give an easy-to-check criterion for multistationarity via box diagrams.  For instance, the diagram depicted above has the form of one of the four in~\eqref{eq:boxes}, so we quickly conclude that the network is multistationary (but not, it turns out, bistable).  Conversely, for a network with exactly two reactions and two species, if its box diagram does {\em not} have one of the forms depicted in~\eqref{eq:boxes}, then it is {\em not} nondegenerately multistationary.
For instance, the autocatalytic network $\{A \to B \to 2A \}$ 
from~\cite[Example 13]{BP} is {\em not} multistationary.

\begin{theorem}[Classification of nondegenerately multistationary networks with two reactions and two species] \label{thm:s=2-r=2}
Let $G$ be a network with exactly two species and two reactions, denoted by $y \to y'$ and $\widetilde y \to \widetilde y'$.  Then $G$ is nondegenerately multistationary if and only if the following hold:
\been
\item the reaction vectors are negative scalar multiples of each other, i.e., $y'-y =- \lambda (\widetilde y' - \widetilde y)$ for some $\lambda>0$, 
\item 
 the two reactants differ in both coordinates (so the box diagram of $G$ is defined), 
\item the slope of the reactant polytope is {\em not} -1,  and
\item  the \bd~of $G$ has one of the following zigzag forms:
\begin{center}
\begin{equation} \label{eq:boxes}
\begin{tikzpicture}[baseline=(current  bounding  box.center),scale=.6]
	\path [fill=gray] (0,0) rectangle (1.5,1);
	\draw [->] (0,1) -- (.5,1.3);
	\draw [->] (1.5,0) -- (1,-.3);
	\draw (0,1) --(1.5,0); 
	\path [fill=gray] (3,0) rectangle (4.5,1);
	\draw [->] (3,1) -- (2.5,.7);
	\draw [->] (4.5,0) -- (5,.3);
	\draw (3,1) --(4.5,0); 
	\path [fill=gray] (6,0) rectangle (7.5,1);
	\draw [->] (6,0) -- (5.5,.3);
	\draw [->] (7.5,1) -- (8, .7);
	\draw (6,0) --(7.5,1); 
	\path [fill=gray] (9,0) rectangle (10.5,1);
	\draw [->] (9,0) -- (9.5, -.3);
	\draw [->] (10.5,1) -- (10,1.3);
	\draw (9,0) --(10.5,1);
	\end{tikzpicture}
\end{equation}

\end{center}
\end{enumerate}
\end{theorem}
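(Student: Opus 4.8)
The plan is to reduce multistationarity of a two-species, two-reaction network to a root-counting problem for a univariate polynomial along a stoichiometric compatibility class, and then read off exactly when two positive roots occur in terms of the geometry of the box diagram.

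First I would dispose of conditions (1) and (2). Condition (1) is forced by Lemma~\ref{lem:2-rxn-dep}: if $G$ admits any positive steady state, then $y'-y = -\lambda(\widetilde y'-\widetilde y)$ for some $\lambda>0$. For condition (2), I would argue that if the two reactants agree in one coordinate, say $y_A=\widetilde y_A$, then the box diagram is degenerate and the network essentially behaves like a one-species network (the common coordinate factors out), so by Theorem~\ref{thm:s=1-r=1} with $r=2$ it cannot be nondegenerately multistationary. This leaves the generic case where the box diagram is a genuine rectangle with the reactant polytope as its diagonal.

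The core of the proof is the following. By condition~(1), the stoichiometric subspace $\St$ is one-dimensional and spanned by the common reaction direction; write the reaction vectors as $v$ and $-\lambda v$. A compatibility class is a line segment, and I would parametrize the positive part of it by a single variable $t$ (or, more convenient, use a monomial substitution exploiting that mass-action monomials are $x^y$ and $x^{\widetilde y}$). The steady-state equation $f_\kappa(x)=0$ becomes, after clearing denominators, the vanishing of a binomial-type expression $\kappa_1 x^y - \lambda \kappa_2 x^{\widetilde y}$ restricted to the line $x^0+\St$; substituting the parametrization turns this into a univariate polynomial $g(t)$ (or, better, a difference of two power functions $t^{a}$ and $t^{b}$ times affine factors), whose number of sign changes in its coefficient sequence is controlled by the relative positions of $y$, $\widetilde y$ and the direction of $\St$. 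I would then invoke Descartes' rule (Proposition~\ref{pro:Descartes_original}) for the upper bound and the Grabiner realizability lemma (Lemma~\ref{lem:descartes-achieved}) for the lower bound, exactly as in the proof of Theorem~\ref{thm:s=1-r=1}. Two positive roots are achievable precisely when the coefficient sequence has two sign changes, and tracking through the parametrization shows this happens exactly when the reactant polytope (the diagonal) separates the two reaction arrows into the ``zigzag'' configuration of~\eqref{eq:boxes}: one arrow points into the box on one side of the diagonal, the other out of the box on the other side, consistently oriented. The slope~$\neq -1$ condition~(3) enters because slope $-1$ is the degenerate case where the two exponents in the relevant binomial coincide after the substitution (the monomials $x^y$ and $x^{\widetilde y}$ become proportional along $\St$), collapsing the polynomial $g(t)$ to one with at most one sign change — indeed this is precisely the situation of Example~\ref{ex:cap}, where one gets either $0$ or $\infty$ many steady states but never two nondegenerate ones.

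The main obstacle I anticipate is making the translation between the algebra and the picture precise and case-free: there are four zigzag diagrams in~\eqref{eq:boxes} and, a priori, a sign-of-$\lambda$ and an orientation bookkeeping that could blow up into many cases. The clean way to handle this is to observe that reversing the roles of the two reactions, or reflecting the plane, acts transitively on the four diagrams, so it suffices to prove one representative case and then quote symmetry; and to phrase the zigzag condition intrinsically as ``the two reaction vectors point to strictly opposite sides of the affine line through the reactants, and each arrow points away from its own reactant in a consistent rotational sense,'' which is exactly the sign pattern needed for the coefficient sequence of $g(t)$ to alternate. Nondegeneracy of the two roots is automatic from Grabiner's lemma, which delivers simple roots with prescribed derivative signs; simple roots of the univariate restriction correspond to nondegenerate steady states since $\dim\St=1$. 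Finally, checking that outside the zigzag-plus-slope conditions one genuinely gets $\le 1$ nondegenerate positive steady state is the Descartes upper-bound direction, which is routine once the polynomial $g(t)$ and its coefficient signs are written down.
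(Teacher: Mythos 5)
Your reduction to a univariate root count along a compatibility class is the right first move, and your handling of conditions (1) and (2) is essentially fine (though the claim that a shared reactant coordinate makes the network ``behave like a one-species network'' is not literally true when $y_1'\neq y_1=\widetilde y_1$; the paper handles that subcase by noting the steady-state locus is a horizontal line meeting a non-horizontal compatibility line at most once). The genuine gap is in your central step: invoking Descartes' rule for the upper bound and Grabiner's lemma for realizability. After substituting the line $x_2=c+\gamma x_1$ into the binomial steady-state equation, the cleared-denominator polynomial in $x_1$ has coefficients that are \emph{not} independently tunable --- they are polynomial expressions in only the two free parameters $\kappa$ (a ratio of rate constants) and $c$ (the conservation value) --- so Lemma~\ref{lem:descartes-achieved} simply does not apply to produce two roots. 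Moreover, Descartes' rule does not give the sharp upper bound of~$2$: for instance, with $y=(0,3)$, $\widetilde y=(2,0)$ and reaction vectors $\pm(1,1)$, the restricted equation becomes $(x_1+c)^3-Kx_1^2=0$, whose coefficient sequence for $c<0$ has three sign changes; and in general the roots that count are only those in the subinterval where $c+\gamma x_1>0$, which Descartes does not see. So both directions of your ``two sign changes iff zigzag'' equivalence are unsupported as written.

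The paper avoids this entirely: Theorem~\ref{thm:s=2-r=2} is derived from Theorem~\ref{thm:s=2-r=2-alternate}, whose proof writes the compatibility class as a line $x_2=g(x_1)=\gamma x_1+T/(y_1'-y_1)$ and the steady-state locus as a power-law graph $x_2=h(x_1)=Kx_1^{-1/\alpha}$, then counts intersections by strict monotonicity/convexity of $h$ (a line meets a strictly convex or concave graph at most twice), and realizes $0$, $1$ doubly degenerate, or $2$ nondegenerate intersections by taking the tangent line of slope $\gamma$ and perturbing $T$. Nondegeneracy and the unstable/stable split come from this tangency argument, not from Grabiner. If you want to salvage a one-variable argument in your style, the workable replacement for Descartes is the logarithmic derivative: $\frac{d}{dx_1}\log\bigl(x_1^{a}(c+\gamma x_1)^{b}\bigr)=a/x_1+b\gamma/(c+\gamma x_1)$ has at most one zero in the admissible interval, which gives the bound of two solutions directly; this is precisely the mechanism the paper uses in the $s$-species generalization (Theorem~\ref{thm:speculation}).
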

\noindent 
Theorem~\ref{thm:s=2-r=2} will largely follow from Theorem~\ref{thm:s=2-r=2-alternate} below, which goes beyond multistationarity to the precise number and nature of the steady states.  We postpone the proof until Section~\ref{sec:proofs-s=r=2}.

To state Theorem~\ref{thm:s=2-r=2-alternate}, we introduce the following notation:

\begin{definition} \label{def:B}
For a species $i$ and two reactions $y \to y'$ and $\widetilde y \to \widetilde y'$, we define
\begin{align*}
\beta_i(y \to y',~\widetilde y \to \widetilde y') ~:=~ (y'_i - y_i)(\widetilde y_i -  y_i)~\in~ \mathbb{Z}.
\end{align*}
Letting $s$ denote the number of species, we write 
\begin{align*}
\beta (y \to y',~\widetilde y \to \widetilde y') ~&:=~ \left( \beta_1(y \to y',~\widetilde y \to \widetilde y'), 
\dots, \beta_s(y \to y',~\widetilde y \to \widetilde y') \right) \\
& = ~(y' - y) \ast (\widetilde y - y) ~\in \mathbb{Z}^s ~,
\end{align*}
where $\ast$ denotes the Hadamard product, 
defined by $(u_1, \ldots, u_s) \ast (v_1 ,\ldots, v_s) := (u_1v_1,\ldots, u_sv_s)$.
\end{definition}

Lemma~\ref{lem:sign} below states that for a 2-reaction network, if the two reactions are positively dependent, then the 
sign of $\beta_i$ 
does not depend on the order of the two reactions, and therefore may be viewed as a function of the network itself.  We will 
see in Theorems~\ref{thm:s=2-r=2-alternate} 
and~\ref{thm:r=2}
that the signs of the $\beta_i$'s classify multistationary networks with two reactions and two species.

\begin{lemma} \label{lem:sign}
If  $y'-y =- \lambda (\widetilde y' - \widetilde y)$ for some $\lambda>0$, then 
	\begin{align*}
{\rm sign} (\beta_i(y \to y',~\widetilde y \to \widetilde y')) ~=~ {\rm sign} (\beta_i(\widetilde y \to \widetilde y',~y \to y'))~.
	\end{align*}
\end{lemma}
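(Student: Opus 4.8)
The plan is to reduce the statement to a one-variable computation in the $i$-th coordinate and to exhibit $\beta_i(y \to y',~\widetilde y \to \widetilde y')$ as a strictly positive scalar multiple of $\beta_i(\widetilde y \to \widetilde y',~y \to y')$, so that the two integers automatically have the same sign.

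First I would unwind Definition~\ref{def:B}. Abbreviating the $i$-th coordinates by $a := y_i$, $a' := y'_i$, $b := \widetilde y_i$, $b' := \widetilde y'_i$, we have $\beta_i(y \to y',~\widetilde y \to \widetilde y') = (a'-a)(b-a)$ and $\beta_i(\widetilde y \to \widetilde y',~y \to y') = (b'-b)(a-b)$. Reading the hypothesis $y'-y = -\lambda(\widetilde y' - \widetilde y)$ in coordinate $i$ gives $a'-a = -\lambda(b'-b)$. Substituting this into the first expression and using $b-a = -(a-b)$ yields
\[
\beta_i(y \to y',~\widetilde y \to \widetilde y') ~=~ (a'-a)(b-a) ~=~ \bigl(-\lambda(b'-b)\bigr)\bigl(-(a-b)\bigr) ~=~ \lambda\,(b'-b)(a-b) ~=~ \lambda\,\beta_i(\widetilde y \to \widetilde y',~y \to y').
\]
Since $\lambda > 0$, the two integers are equal up to a positive factor, hence have the same sign, which is the claim.

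There is no substantive obstacle here: the computation is routine. The only points requiring care are the bookkeeping of signs when the order of the two reactions is swapped (the factor $b-a$ versus $a-b$, contributing one minus sign, which cancels the minus sign coming from the positive-dependence relation), and the observation that $\lambda$ is \emph{strictly} positive, so that the conclusion $\mathrm{sign}(\beta_i) = \mathrm{sign}(\beta_i)$ also covers the degenerate case $\beta_i = 0$ trivially.
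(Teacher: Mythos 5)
Your proof is correct and is essentially the same computation as the paper's: both substitute the positive-dependence relation into one of the two expressions for $\beta_i$ and observe that the resulting positive factor $\lambda$ (or $1/\lambda$) preserves sign. No issues.
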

\begin{proof}
If $y'-y =- \lambda (\widetilde y' - \widetilde y)$, where $\lambda>0$, then 
${\rm sign}(\beta_i(\widetilde y \to \widetilde y',~y \to y')) = {\rm sign}((\widetilde y_i' - \widetilde y_i)  (y_i - \widetilde y_i)) = {\rm sign}\left( ( -1/ \lambda) ( y_i' -  y_i)  (y_i - \widetilde y_i) \right)  = {\rm sign}( ( y_i' -  y_i)  ( \widetilde y_i - y_i)) = {\rm sign}(\beta_i(y \to y',~\widetilde y \to \widetilde y'))$. 
\end{proof}

\begin{theorem}[Classification of multistationary networks with two reactions and two species] \label{thm:s=2-r=2-alternate}
Let $G$ be a network with exactly two species and two reactions, denoted by $y \to y'$ and $\widetilde y \to \widetilde y'$.   
For the two species $i=1,2$, let $\beta_i:=\beta_i(y \to y',~\widetilde y \to \widetilde y')$.  
If the reaction vectors are negative scalar multiples of each other: $y'-y =- \lambda (\widetilde y' - \widetilde y)$ for some $\lambda>0$, then we have the following cases:

\been

\item Suppose that $(\beta_1,\beta_2) = (0,0)$. Then $G$ admits the following possibilities and nothing else: (i) infinitely many degenerate positive steady states, (ii) no positive steady states. 

\item Suppose that $\beta_1=0$ or $\beta_2=0$, but not both.  We have the following subcases:
	\been
	\item If $y_1'=y_1$ and $\beta_2\neq 0$ (or, symmetrically, if $y_2'=y_2$ and $\beta_1\neq 0$), then $G$ admits exactly one nondegenerate positive steady state (in every stoichiometric compatibility class, for all choices of rate constants). 
	\item  In the remaining case --- when $y_1' \neq y_1$, $\widetilde y_1 = y_1$, and $\beta_2\neq 0$ (or, symmetrically, if $y_2' \neq y_2$, $\widetilde y_2 = y_2$, and $\beta_1\neq 0$) --- then $G$ admits the following possibilities and nothing else: (i) no  positive steady states, (ii) exactly one nondegenerate positive steady state. 
	\enen

\item Suppose that $\beta_1\neq 0$ and $\beta_2 \neq 0$.  We have the following subcases:
	\been
	\item If $\beta_1 \beta_2 > 0$, then $G$ admits exactly one positive steady state. 
	\item If $\beta_1  \beta_2 < 0$ and $\widetilde y - y$ 
is a scalar multiple of $(1,-1)$,
then $G$ admits the following possibilities and nothing else: (i) infinitely many degenerate steady states, (ii) exactly one  nondegenerate positive steady state,  (iii) no positive steady states. 
	\item If $\beta_1 \beta_2 < 0$ and $\widetilde y - y$ is {\em not} a scalar multiple of $(1,-1)$,
then $G$ admits all the following possibilities and nothing else: (i) exactly two positive nondegenerate steady states, exactly one of which is exponentially stable, (ii) one doubly degenerate positive steady state,  (iii) no positive steady states. 
	\enen
\enen
Hence, (1) $G$ is {\em not} multistable, 
and also (2) $G$ is nondegenerately multistationary 
(in fact, $\capNPSS(G)=2$) if and only if  $y'-y =- \lambda (\widetilde y' - \widetilde y)$ for some $\lambda>0$, $\beta_1 \beta_2 < 0$, and  $\widetilde y - y$ is {\em not} a scalar multiple of $(1,-1)$.  
\end{theorem}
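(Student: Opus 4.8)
The plan is to reduce everything to a one-variable monomial-ratio equation plus a short sign analysis. By Lemmas~\ref{lem:consistent} and~\ref{lem:2-rxn-dep} only consistent two-reaction networks admit positive steady states, so I work under the hypothesis $y'-y=-\lambda(\widetilde y'-\widetilde y)$, $\lambda>0$. Then the mass-action ODEs~\eqref{eq:ODE-mass-action} become
\[
\dot x ~=~ (y'-y)\Bigl(\kappa\,x^{y}-\tfrac{1}{\lambda}\widetilde\kappa\,x^{\widetilde y}\Bigr)~,
\]
so the vector field has rank one, every stoichiometric compatibility class is the positive part of a line in direction $v:=y'-y$, and $x\in\mathbb{R}^2_{>0}$ is a steady state precisely when $x^{w}=c$, with $w:=y-\widetilde y$ and $c:=\widetilde\kappa/(\lambda\kappa)$ an arbitrary positive number. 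Since $\beta_i=(y_i'-y_i)(\widetilde y_i-y_i)=-v_iw_i$, the sign hypotheses record which of $v_1,v_2,w_1,w_2$ vanish together with the sign of $v_1v_2w_1w_2$. I would first dispatch the degenerate cases $v_1=0$ or $v_2=0$: there the locus $\{x^w=c\}$ and every compatibility class are coordinate-parallel lines, and the conclusions (covering all of part (2a)) follow by checking whether the two line families are parallel, perpendicular, or transverse in a single positive point. From now on assume $v_1,v_2\neq0$.

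On a compatibility class I then write $x_2=ax_1+b$ with $a:=v_2/v_1\neq0$, so $\sgn a=\sgn(v_1v_2)$, where $b$ ranges over $\mathbb{R}$ if $a>0$ and over $\mathbb{R}_{>0}$ if $a<0$ (only then does the line meet $\mathbb{R}^2_{>0}$). Substituting and cancelling the positive monomial $x_1^{\min(y_1,\widetilde y_1)}(ax_1+b)^{\min(y_2,\widetilde y_2)}$, the positive steady states in that class are exactly the solutions in the open interval $D:=\{x_1>0,\ ax_1+b>0\}$ of $\psi(x_1)=d$, where $d:=\widetilde\kappa/(\lambda\kappa)>0$ and $\psi(x_1):=x_1^{w_1}(ax_1+b)^{w_2}$. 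The key identity is
\[
\frac{\psi'(x_1)}{\psi(x_1)} ~=~ \frac{w_1}{x_1}+\frac{aw_2}{ax_1+b} ~=~ \frac{a(w_1+w_2)\,x_1+w_1b}{x_1(ax_1+b)}~,
\]
so on $D$ the sign of $\psi'$ equals that of the \emph{affine} function $N(x_1):=a(w_1+w_2)x_1+w_1b$, which takes the values $w_1b$ and $-w_2b$ at the endpoint-candidates $x_1=0$ and $x_1=-b/a$. Hence $\psi$ has at most one critical point on $D$, and whether it is monotone or unimodal there --- the thing that controls the steady-state count --- is read off from the signs of $w_1,w_2,b,a$.

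The case analysis now runs by the sign of $\beta_1\beta_2=v_1v_2w_1w_2$. If $\beta_1\beta_2>0$ (part (3a)), then $v_1v_2$ and $w_1w_2$ share a sign, and, using also the constraint on $\sgn b$, the values $w_1b$, $-w_2b$ of $N$ at the relevant endpoint of $D$ and the sign of the slope of $N$ all agree, so $N$ is sign-definite on $D$, $\psi$ is a monotone bijection $D\to(0,\infty)$, and every compatibility class contains exactly one positive steady state, nondegenerate since $\psi'\neq0$ there. If $\beta_1\beta_2<0$ and $w$ is a scalar multiple of $(1,-1)$ (part (3b)), then $w_1+w_2=0$, $N\equiv w_1b$ is constant: $\psi$ is strictly monotone when $b\neq0$ (at most one steady state, nondegenerate), and $\psi$ is constant when $b=0$ --- a compatibility class that exists precisely because here $a>0$ --- giving a whole class of degenerate steady states or none, which is exactly the three listed alternatives. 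If $\beta_1\beta_2<0$ and $w$ is not a scalar multiple of $(1,-1)$ (part (3c)), then $N$ is non-constant with unique zero $x_1^*$; choosing the class so that $x_1^*\in D$ makes $\psi$ strictly unimodal on $D$, hence $\psi(x_1)=d$ has at most two solutions, with exactly two (straddling $x_1^*$, so with $\psi'$ of opposite signs) for $d$ just inside the extremum value $\psi(x_1^*)$, one double root for $d=\psi(x_1^*)$, and none for $d$ beyond it; Lemma~\ref{lem:descartes-achieved} can alternatively supply the two-root case with prescribed derivative signs. Translating $\psi'\neq0$, $\psi'=0\neq\psi''$, and $\sgn\psi'$ at a root into properties of $df_\kappa$ restricted to $S$ yields nondegeneracy, double degeneracy, and exponential stability of exactly one of the two roots. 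Finally, across all cases the number of exponentially stable positive steady states is $0$, except exactly $1$ in case (3c), so $\capStable(G)\le 1$ and $G$ is not multistable; and $\capNPSS(G)\ge 2$ occurs only in (3c), where $\capNPSS(G)=2$, which gives the stated equivalence.

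I expect the main obstacle to be organizational rather than conceptual: the sign analysis of $N$ on $D$ branches on the signs of $a,b,w_1,w_2$ and on which of $0$ and $-b/a$ is the finite endpoint of $D$, while one must simultaneously track which lines $x_2=ax_1+b$ genuinely occur as compatibility classes. Doing this uniformly rather than as a sixteen-way case split is where the endpoint values $w_1b$, $-w_2b$ of $N$ and the identity $\sgn a=\sgn(v_1v_2)$ do the work. A secondary subtlety is the bookkeeping that converts the vanishing pattern of $\psi'$ and $\psi''$ at a root into the precise degeneracy type and the stability count of the corresponding steady state of $f_\kappa$, which requires unwinding $df_\kappa$ on the one-dimensional subspace $S$.
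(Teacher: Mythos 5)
Your reduction is sound and matches the paper's up to the point where the root count on a single compatibility class must be performed: both arguments observe that the classes are lines in direction $v=y'-y$ and that positive steady states are cut out by the single binomial equation $x^{y-\widetilde y}=c$. Where you diverge is in the engine for that count. The paper solves the binomial for $x_2$, obtaining the power curve $h(x_1)=Kx_1^{-1/\alpha}$, and counts its intersections with the line $g(x_1)=\gamma x_1+T/(y_1'-y_1)$ by monotonicity, concavity, and tangent-line perturbation arguments, splitting on the signs of the slopes $\gamma$ and $\alpha$ (with $\beta_2/\beta_1=\gamma\alpha$ via Lemma~\ref{lem:2-rxn-mss}). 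You instead substitute the line into the binomial and study $\psi(x_1)=x_1^{w_1}(ax_1+b)^{w_2}$ through its logarithmic derivative, whose numerator $N(x_1)=a(w_1+w_2)x_1+w_1b$ is affine; the endpoint values $w_1b$ and $-w_2b$ then organize all the sign cases at once, and the trichotomy monotone\,/\,constant\,/\,unimodal of $\psi$ on $D$ delivers parts (3a), (3b), (3c) respectively. This is a genuinely more uniform bookkeeping device than the paper's concavity arguments, and it also cleanly absorbs cases (1) and (2b) as the degenerations $w_1=w_2=0$ and $w_1=0$ of the same formula.

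Three points are asserted rather than proved and would need to be written out. First, in (3a) the theorem claims exactly one steady state in \emph{every} class for \emph{all} rate constants, so you must check that $\psi$ maps $D$ \emph{onto} $(0,\infty)$ (a boundary-limit computation at the finite endpoint of $D$ and at $\infty$), not merely that it is injective. Second, in (3c) the existence of a class with the critical point $x_1^*=-w_1b/\bigl(a(w_1+w_2)\bigr)$ interior to $D$ requires a short argument: when $a<0$ it holds for every admissible $b>0$ since then $w_1w_2>0$ forces $N(0)N(-b/a)<0$, but when $a>0$ one must choose the sign of $b$ according to whether $\mathrm{sign}(w_1+w_2)$ agrees with $\mathrm{sign}(w_1)$ or with $\mathrm{sign}(w_2)$. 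Third, the translation of $\mathrm{sign}\,\psi'$ into nondegeneracy and stability should be made explicit via the one-dimensional reduction $\dot x_1=v_1\kappa\,x^{\widetilde y}\bigl(\psi(x_1)-c\bigr)$ along the class, which shows the two roots of a unimodal $\psi=c$ have Jacobians of opposite sign, so exactly one is exponentially stable. None of these is a conceptual obstruction; with them supplied, your argument is a complete and somewhat more systematic alternative to the published proof.
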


\begin{remark} \label{rmk:connect-2-thm}
We will see that the condition ``the slope of the reactant polytope is {\em not} $-1$'' (in Theorem~\ref{thm:s=2-r=2}) is equivalent to ``$\widetilde y - y$ is {\em not} a scalar multiple of $(1,-1)$'' (in Theorem~\ref{thm:s=2-r=2-alternate}).  We employ both, because the former theorem highlights geometric conditions on the box diagram, while the latter presents the corresponding algebraic conditions on the reactions.  Similarly, we will see that the box-diagram condition~\eqref{eq:boxes} is equivalent to the inequality $\beta_1 \beta_2 < 0$.
\end{remark}

\begin{example}[A reactant polytope with slope $-1$] \label{ex:-1-slope}
We revisit the network $G'~=~\{A+B \to 0 ,~ 2A \to 3A+B\}$ from Example~\ref{ex:cap}.
Here, $(\beta_1,\beta_2):=(-1,-1)*(1,-1)=(-1,1)$ and $\wt y - y=(1,-1)$, so by Theorem~\ref{thm:s=2-r=2-alternate} (part 3b), $G'$ admits infinitely many degenerate positive steady states (so, $\capPSS(G')=\infty$), but at most 1 nondegenerate one (so, $\capNPSS(G') = 1$).  We found the same result earlier.  Alternatively, the reactant polytope is the convex hull of $(1,1)$ and $(2,0)$, so its slope is $-1$, and thus, $G'$ is not nondegenerately multistationary (Theorem~\ref{thm:s=2-r=2}).
\end{example}

As in Corollary~\ref{cor:s=1-r=1-atoms} earlier, we find an infinite family of minimal multistationary networks:

\begin{corollary}[Classification of embedding-minimal multistationary/multistable networks with two reactions and two species] \label{cor:s=2-r=2-atoms} %
~
\been
\item A reaction network $G$ with exactly two species and two reactions is an embedding-minimal multistationary network if and only if $G$ is nondegenerately multistationary.  In other words, the 2-reaction, 2-species embedding-minimal multistationary networks are precisely those networks described in Theorem~\ref{thm:s=2-r=2}.
\item There are {\em no} 2-reaction, 2-species embedding-minimal multistable networks.
\enen
\end{corollary}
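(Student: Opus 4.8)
The plan is to derive both parts of Corollary~\ref{cor:s=2-r=2-atoms} as short deductions from results already in hand: Corollary~\ref{cor:r+s} (every nondegenerately multistationary network satisfies $r+s\ge 4$), Theorem~\ref{thm:s=2-r=2} (the explicit classification of nondegenerately multistationary networks with two species and two reactions), and the closing assertion of Theorem~\ref{thm:s=2-r=2-alternate} (no network with two species and two reactions is multistable). The key observation is that, by Definition~\ref{def:atom}, an embedding-minimal multistationary (resp.\ multistable) network is in particular nondegenerately multistationary (resp.\ multistable); so one direction of each equivalence is a tautology, and the content lies entirely in the minimality check, which here reduces to the numerics $r+s\le 3$ for every proper embedded network.

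For part~(1): the implication ``embedding-minimal multistationary $\Rightarrow$ nondegenerately multistationary'' holds by definition. For the converse, suppose $G$ has exactly two species and two reactions and is nondegenerately multistationary; I would show that every network $N$ properly embedded in $G$ fails to be nondegenerately multistationary. By Definition~\ref{def:emb}, such an $N$ is obtained by deleting at least one reaction and/or at least one species from $G$: deleting a reaction leaves at most one reaction, while deleting a species leaves at most one species and at most two reactions, since the restriction operation of Definition~\ref{def:emb} only ever discards reactions (those that become trivial) and never creates them. In every case $r_N+s_N\le 3$, so $N$ is not nondegenerately multistationary by Corollary~\ref{cor:r+s}. (One can also see this directly: $N$ either has at most one reaction and so is not multistationary by Theorem~\ref{thm:s=1-r=1}(1), or has one species and at most two reactions, hence no $2$-alternating subnetwork --- which needs three reactions --- and so is not nondegenerately multistationary by Theorem~\ref{thm:s=1-r=1}(2).) Therefore $G$ is minimal among nondegenerately multistationary networks, i.e., embedding-minimal multistationary; the ``in other words'' clause then follows by combining this equivalence with the list in Theorem~\ref{thm:s=2-r=2}.

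For part~(2): by the closing assertion of Theorem~\ref{thm:s=2-r=2-alternate}, no network with exactly two species and two reactions is multistable, while an embedding-minimal multistable network must, by Definition~\ref{def:atom}, admit two or more exponentially stable positive steady states; hence there is no embedding-minimal multistable network with exactly two species and two reactions. This argument meets no real obstacle; the only point that deserves a line of care is the bookkeeping in part~(1) --- namely that passing from $G$ to a proper embedded network strictly decreases $r$ or $s$ while increasing neither, so the uniform bound $r_N+s_N\le 3$ holds and Corollary~\ref{cor:r+s} applies across the board.
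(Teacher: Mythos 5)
Your proof is correct and follows essentially the same route as the paper: one direction of part (1) is definitional, the other reduces to checking that every proper embedded network has at most one reaction or at most one species (the paper cites Theorem~\ref{thm:s=1-r=1} directly where you package the count via Corollary~\ref{cor:r+s}), and part (2) is immediate from the final assertion of Theorem~\ref{thm:s=2-r=2-alternate}.
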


\begin{proof}
Part~2 is immediate from Theorem~\ref{thm:s=2-r=2-alternate}.  
For part~1, the $\Leftarrow$ direction is by definition.  For the $\Rightarrow$ direction, we need only show that for a 2-species, 2-reaction network, removing one species and/or one reaction yields a network without multiple nondegenerate positive steady states (removing more than one species or reaction would yield a trivial network).  First, a 1-reaction network is not multistationary (Theorem~\ref{thm:s=1-r=1}), so we need only consider removing exactly one species.  A resulting such network has one species and at most two reactions, so it is {\em not} nondegenerately multistationary (Theorem~\ref{thm:s=2-r=2}).  
\end{proof}

The next result, which is analogous to Corollary~\ref{cor:at-most-bi-s=1-r=1} earlier and will be superseded later by Corollary~\ref{cor:at-most-bi-r=2}, follows from the fact that the box diagrams~\eqref{eq:boxes} in Theorem~\ref{thm:s=2-r=2} do {\em not} fit in the triangle $\{(a,b) \mid 0 \leq a,~0 \leq b, ~a+b \leq 2 \}$. 
\begin{corollary} \label{cor:at-most-bi-s=2-r=2}
 Suppose that $G$ is an at-most-bimolecular reaction network. If $G$ has exactly two reactions and two species, then $G$ is {\em not} nondegenerately multistationary, i.e., $\capNPSS(G) \le 1$, and thus is {\em not} multistable. 
 \end{corollary}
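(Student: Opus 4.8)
The plan is to establish the contrapositive: if $G$ has exactly two species and two reactions, say $y \to y'$ and $\widetilde y \to \widetilde y'$, and $G$ is nondegenerately multistationary, then $G$ is \emph{not} at-most-bimolecular. This suffices, because $\capStable(G) \le \capNPSS(G)$, so failing to be nondegenerately multistationary also precludes multistability. By Theorem~\ref{thm:s=2-r=2}, nondegenerate multistationarity forces its four conditions to hold; in particular the box diagram of $G$ is defined and has one of the four zigzag shapes displayed in~\eqref{eq:boxes}, and -- via condition~2 together with Remark~\ref{rmk:connect-2-thm} -- $\beta_1\beta_2 < 0$, so both reaction vectors have nonzero components in both coordinates (the arrows in~\eqref{eq:boxes} are genuinely diagonal). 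Throughout, recall that all four complexes $y, y', \widetilde y, \widetilde y'$ are vectors in $\mathbb{Z}^2_{\ge 0}$.

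The key step is an observation read off from the four pictures in~\eqref{eq:boxes}. In each of them the two (diagonal) reaction vectors point in opposite ``sign-pattern'' directions, so for each coordinate $k \in \{1,2\}$ exactly one of the two reactions strictly decreases the $k$-th coordinate; moreover, the zigzag shape forces that, for a suitable choice of $k$, this ``decreasing'' reaction starts from the reactant that \emph{already} has the smaller $k$-th coordinate. (Concretely, one takes $k$ to be the vertical coordinate in the first and fourth diagrams and the horizontal coordinate in the second and third.) Name that reactant $y$ and the other one $\widetilde y$. Then by construction: (i) $y_k < \widetilde y_k$; (ii) $y'_k < y_k$; and (iii) $\widetilde y'_k > \widetilde y_k$ (the remaining reaction vector has positive $k$-th component). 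Since these quantities are integers and $y'_k \ge 0$, condition (ii) forces $y_k \ge 1$, then (i) forces $\widetilde y_k \ge y_k + 1 \ge 2$, and then (iii) forces $\widetilde y'_k \ge \widetilde y_k + 1 \ge 3$. Hence the complex $\widetilde y'$ has coordinate sum $\widetilde y'_1 + \widetilde y'_2 \ge \widetilde y'_k \ge 3 > 2$, so $G$ is not at-most-bimolecular, as desired.

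I expect the only real work to be the bookkeeping in the middle paragraph: checking, for each of the four box diagrams separately, which coordinate $k$ and which labeling of the two reactions make (i)--(iii) hold at once. Each diagram is settled by inspecting the directions of its two arrows and the positions of its two reactant corners, so this is routine rather than a genuine obstacle. As an alternative that avoids the unified observation, one can instead use condition~3 of Theorem~\ref{thm:s=2-r=2}: the segment joining the two reactants then has slope $\neq -1$, and the only pairs of lattice points in the triangle $\{(a,b) : a, b \ge 0,\ a + b \le 2\}$ that differ in both coordinates with a connecting segment of slope $\neq -1$ are $\{(0,0),(1,1)\}$, $\{(1,0),(0,2)\}$, and $\{(2,0),(0,1)\}$; testing the zigzag condition~\eqref{eq:boxes} against each of these three possibilities again pushes a product complex outside the triangle, and one concludes as before.
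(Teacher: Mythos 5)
Your proof is correct and takes essentially the same approach as the paper: the paper justifies this corollary with the one-line observation that the zigzag box diagrams of~\eqref{eq:boxes} force a complex outside the triangle $\{(a,b) \mid 0 \le a,\ 0 \le b,\ a+b \le 2\}$, and your inequality chain $y'_k < y_k < \widetilde y_k < \widetilde y'_k$ (hence $\widetilde y'_k \ge 3$) is precisely the rigorous form of that observation. It is also exactly the argument the paper uses for the superseding Corollary~\ref{cor:at-most-bi-r=2}, there derived directly from $\beta_i<0$ rather than read off the pictures.
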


Finally, we prove Proposition~\ref{prop:s+r} from earlier, which states that there exist nondegenerately multistationary reaction networks with $r$ reactions and $s$ species whenever $r+s \geq 4$, with $r \geq 2$.

\begin{proof}[Proof of Proposition~\ref{prop:s+r}]
If $s=1$, then $ r\geq 3$, so an $(r-1)$-alternating network is a 1-species, $r$-reaction network that is nondegenerately multistationary (Theorem~\ref{thm:s=1-r=1}).  So, now assume $s \geq 2$.  We construct an $s$-species, $r$-reaction network as follows. Begin with a $2$-species, $2$-reaction network for which the 
box diagram is one of the four depicted in~\eqref{eq:boxes}.  This network is nondegenerately multistationary by Theorem~\ref{thm:s=2-r=2}. Next, add $(r-2)$ extra reactions that each a scalar multiple of the original two reaction vectors, thereby preserving the (1-dimensional) stoichiometric subspace.  The resulting network is nondegenerately multistationary by Lemma~\ref{lem:lift}.  Finally, add in the remaining $(s-2)$ species trivially; for instance, add a species to both sides of a reaction (e.g., replace $A \to B$ by $A+C \to B+C$).
\end{proof}


\subsection{Proofs of Theorems~\ref{thm:s=2-r=2} and~\ref{thm:s=2-r=2-alternate}} \label{sec:proofs-s=r=2}

We begin with a lemma that interprets the $\beta_i$'s in terms of the box diagram.

\begin{lemma} \label{lem:2-rxn-mss}
Let $G$ be a network with exactly two species and two reactions, denoted by $y \to y'$ and $\widetilde y \to \widetilde y'$.   
  For each species $i=1,2$, let $\beta_i:=\beta_i(y \to y',~\widetilde y \to \widetilde y')$.  Let $\gamma$ denote the slope of the reaction vector $y \to y'$, and let $\alpha$ be the slope of the reactant polytope of the network.  Then, 
	$\beta_2/\beta_1 = \gamma \alpha$, 
whenever $\beta_1$ and $\beta_2$ are nonzero.
\end{lemma}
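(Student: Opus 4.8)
The plan is a direct computation from the definitions; the only care needed concerns which slopes are well-defined. Write the reactants and the product in coordinates as $y=(y_1,y_2)$, $\widetilde y=(\widetilde y_1,\widetilde y_2)$, and $y'=(y_1',y_2')$. First I would unpack the hypothesis $\beta_1 \neq 0$: since $\beta_1 = (y_1'-y_1)(\widetilde y_1 - y_1)$, this forces both $y_1' \neq y_1$ and $\widetilde y_1 \neq y_1$. In particular $y \neq \widetilde y$, so the reactant polytope is a genuine (nondegenerate) line segment joining $y$ and $\widetilde y$, its slope is $\alpha = (\widetilde y_2 - y_2)/(\widetilde y_1 - y_1)$, and the slope of the reaction vector $y'-y$ is $\gamma = (y_2'-y_2)/(y_1'-y_1)$; both denominators are nonzero, so $\alpha$ and $\gamma$ are finite and well-defined. (The hypothesis $\beta_2 \neq 0$ additionally says $y_2'\neq y_2$ and $\widetilde y_2 \neq y_2$, so $\alpha,\gamma \neq 0$ as well, though this is not needed for the identity.)

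With the slopes in hand, the claim follows by simply factoring the Hadamard-product definition of $\beta$:
\[
\frac{\beta_2}{\beta_1}
~=~ \frac{(y_2'-y_2)(\widetilde y_2 - y_2)}{(y_1'-y_1)(\widetilde y_1 - y_1)}
~=~ \frac{y_2'-y_2}{y_1'-y_1}\cdot\frac{\widetilde y_2 - y_2}{\widetilde y_1 - y_1}
~=~ \gamma\,\alpha~.
\]

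Since the argument is purely algebraic, there is no substantive obstacle. The only points to watch are the degenerate-slope bookkeeping above (ensuring we never divide by zero and that ``slope of the reactant polytope'' is meaningful under the stated hypotheses), and perhaps a one-line remark that, by Lemma~\ref{lem:sign}, the sign of each $\beta_i$ is independent of the ordering of the two reactions when the reaction vectors are negatively proportional, so that the relation is well-posed as a statement about the network $G$ itself. No appeal to the deficiency theorems or to Lemma~\ref{lem:2-rxn-dep} is required here; those enter only when this lemma is later combined with Theorem~\ref{thm:s=2-r=2-alternate}.
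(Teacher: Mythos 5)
Your proposal is correct and is essentially the paper's own (one-line) proof: the identity follows immediately from factoring $\beta_2/\beta_1$ as $\frac{y_2'-y_2}{y_1'-y_1}\cdot\frac{\widetilde y_2 - y_2}{\widetilde y_1 - y_1}=\gamma\alpha$. Your extra bookkeeping about nonvanishing denominators under the hypothesis $\beta_1\neq 0$ is a harmless (and welcome) elaboration of what the paper leaves implicit.
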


\begin{proof}
This is immediate from the fact that $\gamma=\frac{y_2'-y_2}{y_1'-y_1}$ and $\alpha=\frac{\widetilde y_2-y_2}{\widetilde y_1 - y_1}$. 
\end{proof}

\begin{proof}[Proof of Theorem~\ref{thm:s=2-r=2-alternate}]
Recall that the reaction vectors must be negative scalar multiples of each other for the network to be multistationary (Lemma~\ref{lem:2-rxn-dep}).
Now we use this relationship between the reaction vectors to simplify the differential equations:
\begin{align} \label{eqn:diff-eqns-pf}
\frac{dx_1}{dt}	&~=~ k(y'_1 -y_1)x_1^{y_1}x_2^{y_2} + \widetilde k(\widetilde y'_1- \widetilde y_1)x_1^{\widetilde y_1}x_2^{\widetilde y_2}
				~=~ k(y'_1 -y_1)x_1^{y_1}x_2^{y_2} - \lambda \widetilde k( y'_1 -  y_1)x_1^{\widetilde y_1}x_2^{\widetilde y_2}  \\
\label{eqn:diff-eqns-pf-2}
\frac{dx_2}{dt}	&~=~ k(y'_2 -y_2)x_1^{y_1}x_2^{y_2} + \widetilde k(\widetilde y'_2 - \widetilde y_2)x_1^{\widetilde y_1}x_2^{\widetilde y_2}
				~=~ k(y'_2 -y_2)x_1^{y_1}x_2^{y_2} - \lambda \widetilde k( y'_2 -  y_2)x_1^{\widetilde y_1}x_2^{\widetilde y_2} ~.
\end{align}
Note that $(y_2'-y_2) \frac{dx_1}{dt}=(y_1'-y_1) \frac{dx_2}{dt}$, so each stoichiometric compatibility class is a line: 
	\begin{align} \label{eq:stoic-eqn}
	(y_2'-y_2) x_1 +T ~=~(y_1'-y_1) x_2 ~, 
	\end{align}
for some $T \in \mathbb{R}$ for which the line~\eqref{eq:stoic-eqn} intersects the positive orthant of the $(x_1,x_2)$ plane. 
Also, note that it is impossible for $y'_1 - y_1=y'_2 - y_2=0$, i.e., at least one of differential equations~(\ref{eqn:diff-eqns-pf}--\ref{eqn:diff-eqns-pf-2}) must be nonzero, because otherwise the reaction $y \to y'$ would be trivial. Therefore, the steady state equations $\frac{dx_i}{dt}=0$ are equivalent to the single generalized-binomial equation:
\begin{align} \label{eqn:steady-state-eqn}
 k x_2^{y_2- \widetilde y_2} &~=~ \lambda \widetilde k x_1^{\widetilde y_1-y_1} ~.
\end{align}

We begin with the case when $(\beta_1,\beta_2) = \left( (y'_1 - y_1) (\widetilde y_1 -  y_1),~ (y'_2 - y_2)(\widetilde y_2 -  y_2) \right)=(0,0)$. 
We recall that $y'_1 - y_1=y'_2 - y_2=0$ is forbidden, so there are three subcases:
	\begin{enumerate}[(i)]
	\item $\widetilde y_1 - y_1=\widetilde y_2 -  y_2=0$ (in other words, the two reactants are equal: $y=\widetilde y$), 
	\item $y'_1 - y_1=\widetilde y_2 -  y_2=0$, and 
	\item $\widetilde y_1 -  y_1=y'_2 - y_2=0$.
	\end{enumerate}

In case (i), if $k=\lambda \widetilde k$, then equation~(\ref{eqn:steady-state-eqn}) holds trivially. So, in this case, every stoichiometric compatibility class entirely consists of (infinitely many) degenerate steady states.  On the other hand, if $k \neq \lambda \widetilde k$, then equation~(\ref{eqn:steady-state-eqn})
does not hold, so, in this case, there are no positive steady states.  Therefore, a network $G$ in case (i) satisfies the statement of the theorem.

For case (ii), we may assume that $\widetilde y_1 - y_1 \neq 0$ (otherwise, we are back in case (i)).  Then, the steady state equation~\eqref{eqn:steady-state-eqn} is a vertical line, namely, $x_1=\left(\frac{k}{\lambda \widetilde k}\right)^{1/(y_1 - \widetilde y_1)}$.  The stoichiometric compatibility class~\eqref{eq:stoic-eqn} also is a vertical line, so for one choice of $T$, these two lines coincide (resulting in a stoichiometric compatibility class consisting entirely of degenerate steady states); and for all other values of $T$, there are no positive steady states.  Hence, a network $G$ in case~(ii) satisfies the statement of the theorem.
Finally, case (iii) is symmetric to case (ii).

Now assume that $\beta_1= (y'_1 - y_1) (\widetilde y_1 -  y_1)=0$, while $\beta_2 = (y'_2 - y_2)(\widetilde y_2 -  y_2) \neq 0$.  First, if $y'_1 - y_1=0$, then the compatibility class~\eqref{eq:stoic-eqn} is a vertical line passing through the positive orthant of the $(x_1,x_2)$ plane (here we use the hypothesis $y_2' \neq y_2$), and the steady state equation~\eqref{eqn:steady-state-eqn} is a positive function that defines $x_2$ in terms of $x_1$ (here, we use $\widetilde y_2 \neq y_2$).  Thus, there is a unique  (nondegenerate) positive steady state.  In the remaining case, when $y'_1 - y_1 \neq 0$ and $\widetilde y_1 -  y_1=0$, the compatibility class~\eqref{eq:stoic-eqn} is a non-horizontal and non-vertical line  (because $y_i' \neq y_i$ for $i=1,2$) with $x_2$-intercept $-T/(y_2'-y_2)$ that passes through the positive orthant, and the steady-state equation~\eqref{eqn:steady-state-eqn} defines a horizontal line (again, we use $\widetilde y_2 \neq y_2$).  So, for some choices of (negative) $T$, there is a unique (nondegenerate) positive steady state, and for others, there is no positive steady state.  Finally, the case when $\beta_1 =0$ and $\beta_2 \neq 0$ is symmetric.

Now assume that $\beta_1 \neq 0$ and $\beta_2 \neq 0$.  Then, by Lemma~\ref{lem:2-rxn-mss}, 
$\beta_2/\beta_1 = \gamma \alpha \neq 0$,
where $\gamma:=(y_2'-y_2)/(y_1'-y_1)$ is the (nonzero) slope of the reaction vector $y \to y'$ (and thus also that of $\widetilde y \to \widetilde y'$), and 
$\alpha:=(\widetilde y_2-y_2)/(\widetilde y_1 - y_1)$ is the (nonzero) slope of the reactant polytope.

Via $K:=\left( {\lambda \widetilde k}/ {k}\right)^{1/(y_2-\widetilde y_2)}$, we rewrite the stoichiometric and steady state equations~(\ref{eq:stoic-eqn}--\ref{eqn:steady-state-eqn}): 
	\begin{align*}
	x_2 &= g(x_1) ~:=~ \gamma x_1 + {T}/({y_1'-y_1}) \\
	x_2 &= h(x_1) ~:=~ K x_1^	 {-1/\alpha}   ~, 	
	\end{align*}
so the positive steady states correspond to positive solutions $x_1\in \mathbb{R}$ to the equation $h(x_1)-g(x_1)=0$.  Accordingly, the remainder of the proof comprises counting these solutions; more precisely, for fixed values of $\gamma$ and $\alpha$, we will determine the possible numbers of solutions as $K>0$ and $T \in \mathbb{R}$ vary (note that $K$ can take any positive value, by choice of $k$ and $\widetilde k$).

We begin with the subcase  $\beta_1 \beta_2 >0$, i.e., $\gamma \alpha >0$.  If $\alpha>0$ and $\gamma>0$, then $h(x_1)- g(x_1)$ is a decreasing function for $x_1>0$ (because $h$ and $-g$ are decreasing) that is positive for positive $x_1 \approx 0$ and negative for $x_1 \gg 0$; hence, there is a unique positive steady state.  Similarly,  if $\alpha<0$ and $\gamma<0$, then $h(x_1)- g(x_1)$ is increasing (because $h$ and $-g$ are increasing) and is negative for positive $x_1 \approx 0$ and positive for $x_1 \gg 0$; hence, there is a unique positive steady state, and it is nondegenerate.  

Now consider the subcase when the slope of the reactant polytope is $\alpha =-1$ (i.e., $\wt y - y $ is a scalar multiple of $(1,-1)$, as the reactant polytope is the line segment from $y$ to $\wt y$) and the slope of the reaction vectors is $\gamma>0$.  Both $g$ and $h$ are lines with positive slopes, namely $\gamma$ and $K$, which pass through the positive orthant.  Therefore, there can be either infinitely many (degenerate) positive steady states (for instance, when $T=0$ and $K=\gamma$), a unique (nondegenerate) positive steady state (e.g., if $T>0$ and $K>\gamma$), or no positive steady states (e.g., if $T>0$ and $K<\gamma$).

The remaining subcase is when the slope $\alpha$ of the reactant polytope is not $-1$ and its sign is opposite that of the (shared, nonzero) slope of the reaction vectors.  We have two further subcases, based on the signs of $\gamma$ and $\alpha$.  First, if $\gamma <0$ (so, $\alpha >0$), then $g$ is linear and decreasing, and $h$ is concave down and decreasing, so there are at most 2 intersection points of their graphs (i.e., up to 2 positive steady states).  We will show that 0, 1, and 2 intersection points are possible.  
For any choice of $h$ (which is concave down and the derivative takes all values in $\mathbb{R}_{<0}$), take the (unique) tangent line to the graph of $h$ with slope $\gamma$: this line has the form of $g$, and the resulting system has a doubly degenerate positive steady state.  Now, shift the line slightly to the left (now it does not intersect the graph of $h$), so that it still intersects the positive orthant.  It follows that the equation of the resulting line has the form of $g$, and there are no positive steady states for this $g$ and $h$.  Similarly, shifting the line slightly to the right yields 2 (nondegenerate) positive steady states -- one exponentially stable, one unstable.

Finally, the $\gamma >0$ (so, $\alpha < 0$) case is similar to the $\gamma<0$ case.  Here, $g$ is a line with positive slope, and $h$ is a positive (for $x_1>0$) increasing function that is either concave up with derivative taking all possible positive values (\caseOne) or concave down with derivative taking all possible negative values (\caseTwo).  Thus, again there are at most 2 intersection points.  Now fix $h$, and then pick $T$ such that the graph of $g$ is a tangent line of the graph of $h$: this corresponds to a doubly degenerate point.  Accordingly, adjusting $T$ slightly so the graph of $g$ moves to the left yields either 2 nondegenerate positive steady states -- one exponentially stable, one unstable (\caseOne) or 0 positive steady states (\caseTwo).  Similarly, moving $g$ slightly to the right yields either 0 or 2 (nondegenerate) positive steady states, respectively.  
%
\end{proof}

 \begin{proof}[Proof of Theorem~\ref{thm:s=2-r=2}]
This result follows directly from Theorem~\ref{thm:s=2-r=2-alternate}, once we reinterpret the conditions characterizing nondegenerate multistationarity (namely, $\beta_1 \beta_2 < 0$ and $\widetilde y - y$ is {\em not} a scalar multiple of $(1,-1)$) in terms of box diagrams.  To this end, the reactant polytope is the line segment from $y$ to $\wt y$, so $\wt y - y$ is {\em not} a scalar multiple of $(1,-1)$ if and only if the reactant polytope has slope {\em not} equal to $-1$.  
Also, by Lemma~\ref{lem:2-rxn-mss}, $\beta_1 \beta_2 <0$ if and only if (a) the reactants $y$ and $\widetilde y$ differ in both coordinates and (b) 
the slopes of the reaction vector of $y \to y'$ and the slope of the reactant polytope are both defined and nonzero, and have opposite signs;
equivalently, the box diagram of the network has one of the four forms depicted in~\eqref{eq:boxes}.
 \end{proof}

\section{Networks with two (possibly reversible) reactions} \label{sec:two-rxn-net}
The previous section analyzed networks with exactly two reactions and two species.  
Now we again consider networks with two reactions, but we allow any number of species and permit each reaction to be possibly reversible.  
In this setting, Boros used Feinberg's deficiency one algorithm\footnote{The original presentation and proof of the deficiency one algorithm are in~\cite{feinberg1988chemical,Fein95DefOne}.  A concise description is in~\cite{joshi2013complete}.} to resolve the question of multistationarity (Question~\ref{q:main}) for those networks  that are ``regular'' (a technical condition that is stronger than being consistent, and is required to apply the deficiency one algorithm)~\cite{revisiting}.  We describe those results in Theorems~\ref{thm:r=2},~\ref{thm:i=1-r=1}, and~\ref{thm:rev=2}.  Moreover, we go further: we determine which of these networks admit multiple nondegenerate/stable steady states (to address Questions~\ref{q:main-nondeg} and~\ref{q:main-multistable}).

\subsection{Networks with two reactions} \label{subsec:2-irr}

Here we consider networks with exactly two reactions $y \to y'$ and $\widetilde y \to \widetilde y'$.  This includes the case of a single reversible reaction $y \rla y'$. 
As an aside, note that Boros analyzed the case of a single reversible reaction together with all outflows~\cite{revisiting}; here, we do not analyze such networks.

\subsubsection{Main results} \label{subsubsec:2-irr-main}

We now state the two main results of this subsection, which generalize Theorems~\ref{thm:s=2-r=2} and~\ref{thm:s=2-r=2-alternate}.   Theorem~\ref{thm:r=2} in the 2-linkage-class case was done by Boros~\cite{revisiting}\footnote{Boros did not use the function $\beta$, but it is easy to check that the conditions he gave are equivalent to those here.}.  We give an alternate proof, which we postpone until Section~\ref{subsubsec:2-irr-pf}.
\begin{theorem}[Classification of multistationary networks with two reactions] \label{thm:r=2}
Let $G$ be a network that consists of two reactions, denoted by $y \to y'$ and $\widetilde y \to \widetilde y'$.  Let $\beta := (y'-y) \ast (\wt y -y) \in \Z^s$, where $s$ is the number of species. 
Then $G$ is multistationary (i.e., $\capPSS(G) \geq 2$) if and only if 
\been
\item  the reaction vectors are negative scalar multiples of each other: $y'-y =- \lambda (\widetilde y' - \widetilde y)$ for some $\lambda>0$, and 
\item 
$\beta$ either is the zero vector or has at least one positive and at least one negative coordinate.
\enen
\end{theorem}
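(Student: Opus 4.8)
The plan is to extend to an arbitrary number of species the one-variable reduction already carried out for two species in the proof of Theorem~\ref{thm:s=2-r=2-alternate}, replacing the explicit functions $g,h$ there by a single scalar function along each stoichiometric compatibility class. First I would dispose of condition 1: if $G$ is multistationary then $\capPSS(G)\ge 1$, so $G$ is consistent (Lemma~\ref{lem:consistent}) and hence $y'-y=-\lambda(\widetilde y'-\widetilde y)$ for some $\lambda>0$ (Lemma~\ref{lem:2-rxn-dep}); conversely, if condition 1 fails then $\capPSS(G)=0$. Assuming condition 1, the mass-action ODEs collapse just as in~(\ref{eqn:diff-eqns-pf})--(\ref{eqn:diff-eqns-pf-2}): each $\dot x_i$ equals the fixed scalar $y_i'-y_i$ times one common factor, and since $y\to y'$ is nontrivial the positive steady states are exactly the positive solutions of a binomial equation $x^{\widetilde y-y}=c$, where $c$ ranges over all of $\mathbb{R}_{>0}$ as the rate constants vary. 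Writing $v:=y'-y$ and $w:=\widetilde y-y$, each compatibility class is a segment $\{x^0+\tau v:\tau\in I\}$ on an open interval $I:=\{\tau:\ x^0+\tau v>0\}$, and on it the positive steady states are the roots of $\Phi(\tau):=\sum_i w_i\log(x^0_i+\tau v_i)=\log c$. The two computational facts I would record are that $\Phi'(\tau)=\sum_i \beta_i/(x^0_i+\tau v_i)$ with every denominator positive on $I$, and that $\beta_i\neq 0$ implies $v_i\neq 0$.

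For the ``only if'' direction I would argue the contrapositive of condition 2: suppose $\beta\neq 0$ but all its nonzero coordinates have one sign, say all are $\ge 0$ (the other case is symmetric) with some $\beta_{i_0}>0$. Then on every $I$ each summand of $\Phi'$ is nonnegative and the $i_0$-summand is positive, so $\Phi$ is strictly increasing on every compatibility class; hence $\Phi(\tau)=\log c$ has at most one solution there, $\capPSS(G)\le 1$, and $G$ is not multistationary. Combined with the previous paragraph, multistationarity forces both conditions.

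For the ``if'' direction, assume conditions 1 and 2. If $\beta$ is the zero vector, then $v$ and $w$ have disjoint supports, so $\Phi$ is constant on every line; choosing the rate constants so that $c$ equals that constant value of $x^{w}$ makes an entire compatibility class consist of steady states, so $\capPSS(G)=\infty$. If instead $\beta$ has a positive coordinate $j$ and a negative coordinate $k$ (so $v_j,v_k\neq 0$), I would produce a compatibility class on which $\Phi$ is non-monotone but not constant: by choosing $x^0$ so that near one end of $I$ some coordinate $x^0_m+\tau v_m$ tends to $0$ while the others stay bounded away from $0$, the corresponding term of $\Phi'$ dominates and fixes the sign of $\Phi'$ there; arranging a $j$-type sign at one end of $I$ and a $k$-type sign at the other makes $\Phi'$ change sign, so $\Phi$ attains a strict interior extremum, and then for $c$ with $\log c$ just inside that extreme value the equation $\Phi(\tau)=\log c$ has two roots, i.e.\ $\capPSS(G)\ge 2$.

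I expect this last construction to be the main obstacle, because it requires a case analysis. When $v_j$ and $v_k$ have opposite signs the interval $I$ is bounded with the facets $x_j=0$ and $x_k=0$ as its two endpoints and the argument is immediate; when $v_j$ and $v_k$ have the same sign, $I$ is a half-line and these facets lie on the same side, so one must instead combine a boundary blow-up at the finite end with the asymptotics of $\Phi'$ as $\tau\to\pm\infty$ (governed by $\sum_{v_i\neq 0}w_i$, with a finer $x^0$-dependent estimate when this sum vanishes), and in the few residual configurations one falls back on making $\Phi$ identically constant to get infinitely many degenerate steady states. A cleaner alternative that avoids the case analysis is to project $G$ onto the two species $\{j,k\}$: since $\beta_j,\beta_k\neq 0$ this is a genuine $2$-species $2$-reaction network with $\beta_1\beta_2<0$, hence multistationary by Theorem~\ref{thm:s=2-r=2-alternate}, and one then lifts multistationarity back to $G$ by an embedded-network comparison result in the spirit of~\cite[\S 4]{mss-review}; I would take this route if a lifting statement in precisely the needed form is available, and otherwise complete the direct construction.
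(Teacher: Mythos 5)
Your setup and your ``only if'' direction coincide with the paper's: after disposing of condition~1 via Lemma~\ref{lem:consistent}, the paper also reduces the steady states to a one-variable equation along each compatibility class, takes logarithms, and observes that the derivative is $\langle \beta,\rho\rangle$ for a positive vector $\rho$, so that a fixed sign pattern of $\beta$ forces monotonicity. Where you genuinely diverge is the ``if'' direction when $\beta$ has mixed signs. The paper does \emph{not} argue via boundary behaviour of $\Phi'$; it instead produces a positive $\rho$ with $\langle\beta,\rho\rangle=0$ \emph{and} nonvanishing second derivative (this is the content of Lemma~\ref{lem:square-lem}, a small perturbation argument in the orthant $\mathrm{sign}^{-1}(\gamma)$), converts $\rho$ back into a point $(\widetilde x_1,\{c_i\})$, and perturbs $\kappa$ to split the nondegenerate critical point into two roots. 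That route has the advantage of proving Theorem~\ref{thm:speculation} (nondegenerate multistationarity) simultaneously, which your intermediate-value argument does not immediately give: the two roots you extract from an interior extremum of $\Phi$ need not be nondegenerate without an extra Sard-type perturbation (harmless here, since only $\capPSS\ge 2$ is claimed). Conversely, your route avoids Lemma~\ref{lem:square-lem} entirely, at the price of the endpoint/asymptotic case analysis you anticipate; I checked that this analysis does close (opposite-sign $v_j,v_k$ is immediate; same-sign with $\sum_{v_i\ne 0}w_i\ne 0$ follows from the $1/\tau$ asymptotics; same-sign with $\sum w_i=0$ needs the $1/\tau^2$ term, whose sign you can control by the choice of $x^0$ once at least three coordinates of $\beta$ are nonzero, the two-coordinate case being exactly the slope~$-1$ configuration where the degenerate continuum is the correct fallback). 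One caution: your ``cleaner alternative'' of projecting onto the species $\{j,k\}$ and lifting is not backed by any result in the paper or in \cite{mss-review} --- removing a species with $\beta_i\neq 0$ genuinely alters the steady-state function and the admissible interval, and the paper's own species-removal step (the equality~\eqref{eq:subclaim}) is proved only for species with $\beta_i=0$ --- so you should treat the direct construction as the actual proof, as you indicate you would.
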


\begin{theorem}[Classification of {\em nondegenerately} multistationary networks with two reactions] \label{thm:speculation}
Let $G$ be a network that consists of two reactions, denoted by $y \to y'$ and $\widetilde y \to \widetilde y'$.  Let $\beta := (y'-y) \ast (\wt y -y)$. 
 Then $G$ is nondegenerately multistationary if and only if 
 \been
\item  the reaction vectors are negative scalar multiples of each other: $y'-y =- \lambda (\widetilde y' - \widetilde y)$ for some $\lambda>0$, 
\item $\beta$ 
has at least one positive and at least one negative coordinate, and 
\item if $\beta$ has exactly one positive and one negative coordinate, say $\beta_i>0$ and $\beta_j<0$, then the projection of the reactant polytope to the $(i,j)$ plane does {\em  not} have slope $-1$. 
\enen
 \end{theorem}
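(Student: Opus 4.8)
The plan is to reduce the general case ($s$ species, two reactions) to the two-species analysis already completed in Theorem~\ref{thm:s=2-r=2-alternate}. First, by Lemma~\ref{lem:2-rxn-dep}, any network with a positive steady state must satisfy condition~(1), so in the forward direction I may assume $y'-y = -\lambda(\widetilde y' - \widetilde y)$ throughout; condition~(1) is also assumed in the converse. Under this relation, exactly as in~\eqref{eqn:steady-state-eqn} the two-reaction steady-state system collapses to a single generalized-binomial equation $k\, x^{y} = \widetilde k\, x^{\widetilde y}$, i.e.\ $x^{\widetilde y - y} = k/(\lambda\widetilde k)$ after the appropriate bookkeeping, while each stoichiometric compatibility class is an affine translate of the line $\R(y'-y)$. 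The key observation is that $\sgn(\beta_i) = \sgn\big((y'_i - y_i)(\widetilde y_i - y_i)\big)$ records, for each species $i$, whether the reaction vector's $i$-th coordinate and the reactant-displacement's $i$-th coordinate point the same way, the opposite way, or whether one of them vanishes --- and this is precisely the data that governed the planar zigzag picture. By Lemma~\ref{lem:sign} this sign vector is well defined on the network.

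Next I would treat the converse (sufficiency) by a lifting argument. Given conditions~(1)--(3), pick coordinates $i$ and $j$ with $\beta_i > 0$ and $\beta_j < 0$ (these exist by~(2)); if $\beta$ has more than one positive or more than one negative coordinate, choose any such pair, and condition~(3) is then vacuous for that pair. Project the network onto the $(i,j)$-plane: because $\beta_i\beta_j < 0$ and the reaction vectors remain negative scalar multiples of each other after projection, the projected network has a box diagram of one of the four forms in~\eqref{eq:boxes}, provided the projected reactant polytope does not have slope $-1$. When $\beta$ has exactly one positive and one negative coordinate, condition~(3) guarantees exactly this; when $\beta$ has (say) two negative coordinates $j, j'$, I would argue that at least one choice among $(i,j)$ and $(i,j')$ yields projected-reactant-polytope slope $\neq -1$ (if the displacement $\widetilde y - y$ had slope $-1$ in both the $(i,j)$- and $(i,j')$-planes, then $\widetilde y_j - y_j$ and $\widetilde y_{j'} - y_{j'}$ would be forced into a relation contradicting $\beta_j, \beta_{j'}$ both being nonzero and of the same sign together with $\beta_i>0$; this small case-check is where I expect the most fiddly bookkeeping). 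So a suitable 2-species projection $\bar G$ is nondegenerately multistationary by Theorem~\ref{thm:s=2-r=2}. Then $\bar G$ is an embedded network of $G$ sharing the stoichiometric subspace dimension appropriately --- more precisely, $G$ restricted to the two species $\{i,j\}$ equals $\bar G$, and the reaction vectors of $G$ all lie on a line, so removing the other species does not change the $1$-dimensional stoichiometric subspace structure --- hence $\capNPSS(\bar G) \le \capNPSS(G)$ by the embedding/lifting machinery (Lemma~\ref{lem:lift} together with the projection argument, as in the proof that embedded networks lift multistationarity). Thus $\capNPSS(G) \ge 2$.

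For the necessity direction, suppose $G$ is nondegenerately multistationary. Condition~(1) is Lemma~\ref{lem:2-rxn-dep}. For~(2), I would show $\beta$ must have both a positive and a negative coordinate: if every $\beta_i \ge 0$ with at least one strictly positive, then along the line $x_2 = g(x_1)$ versus $x_2 = h(x_1)$ reasoning of Theorem~\ref{thm:s=2-r=2-alternate}, the relevant monotone comparison forces at most one positive intersection (the $\beta_1\beta_2>0$ analysis generalizes coordinatewise: $h(x)/g(x)$-type functions are monotone), so $\capPSS(G) \le 1$, and symmetrically if all $\beta_i \le 0$; and the all-zero case gives only degenerate or no steady states (the $(0,0)$ case of Theorem~\ref{thm:s=2-r=2-alternate}, again generalized). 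For~(3), in the case where $\beta$ has a unique positive coordinate $\beta_i$ and unique negative coordinate $\beta_j$, all other coordinates of $\beta$ vanish, meaning in every species $k \neq i,j$ either $y'_k = y_k$ or $\widetilde y_k = y_k$; in either event the $k$-th coordinate imposes no obstruction to solving, so the steady-state count is governed entirely by the $(i,j)$-projection, which is a 2-species, 2-reaction network, and by Theorem~\ref{thm:s=2-r=2} nondegenerate multistationarity of that projection requires its reactant polytope to have slope $\neq -1$. Since passing to the projection cannot destroy the multistationarity (it can only merge steady states), $G$ being nondegenerately multistationary forces the projection to be so as well, giving~(3). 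The main obstacle, as flagged, is the careful handling of the ``extra'' zero or nonzero coordinates of $\beta$ when choosing the projecting plane --- ensuring both that some admissible plane exists and that projection neither creates spurious steady states nor destroys genuine multistationarity; everything else is a direct generalization of the two-species computations already carried out.
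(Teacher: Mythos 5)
There is a genuine gap in your sufficiency argument, and it is fatal to the strategy of reducing everything to the two-species case. You claim that when $\beta$ has (say) one positive coordinate $i$ and two negative coordinates $j,j'$, at least one of the pairs $(i,j)$, $(i,j')$ must give a projected reactant polytope of slope $\neq -1$. This is false. Take $G=\{A+2B+2C\to 2A+3B+3C,\ 2A+B+C\to A\}$: here $y=(1,2,2)$, $\widetilde y=(2,1,1)$, $y'-y=(1,1,1)=-(\widetilde y'-\widetilde y)$, and $\beta=(1,1,1)\ast(1,-1,-1)=(1,-1,-1)$. Conditions (1)--(3) of the theorem hold (condition (3) is vacuous), and indeed $G$ is nondegenerately multistationary: the steady-state equation reduces to $(x_1+c_2)(x_1+c_3)/x_1=\mathrm{const}$, which for $c_2,c_3>0$ is convex on $x_1>0$ and takes values slightly above its minimum twice. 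But \emph{every} pair $(i,j)$ with $\beta_i\beta_j<0$ has projected reactant-polytope slope exactly $-1$, so by Theorem~\ref{thm:s=2-r=2-alternate} (part 3b) \emph{no} two-species embedded network of $G$ is nondegenerately multistationary. Your projection-and-lift argument therefore cannot produce the second nondegenerate steady state; the $s\geq 3$ case with more than two nonzero coordinates of $\beta$ genuinely requires a multivariate argument. (A secondary problem: Lemma~\ref{lem:lift} lifts steady states from \emph{subnetworks with the same stoichiometric subspace}, not from embedded networks obtained by deleting species; lifting across species removal needs separate justification, and the example above shows it can fail outright.)

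The paper's proof avoids this trap. After disposing of the species with $y_i'=y_i$ and those with $\widetilde y_i=y_i$ (the latter via a careful two-sided comparison of the admissible intervals $D$ and $D'$, choosing the conserved quantities $c_i$ so that no roots are lost), it reduces the steady-state condition to a single one-variable equation $f(x_1)=K$ and works with $g=\log f$. The identities $g'=\tfrac{1}{\beta_1}\langle\beta,\rho\rangle$ and $g''=-\langle\alpha,(\gamma\ast\rho)\ast(\gamma\ast\rho)\rangle$ convert the problem into finding $v\in\alpha^{\perp}\cap\mathrm{sign}^{-1}(\gamma)$ with $v\ast v\notin\alpha^{\perp}$, which is exactly Lemma~\ref{lem:square-lem}; this produces a nondegenerate critical point of $f$ and hence two nondegenerate roots nearby. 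Your necessity direction is closer to the paper's (the case of exactly one positive and one negative coordinate of $\beta$ really does reduce to the two-species theorem after the $\beta_k=0$ species are handled), but the sufficiency direction needs to be rebuilt along these lines rather than by projection.
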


\begin{remark} \label{rmk:2-irr-box}
Condition~2 of Theorem~\ref{thm:speculation} is equivalent to requiring that for some choice of two species $i$ and $j$, the projection of the box diagram to the $(i,j)$-plane has one of the four (zigzag) forms~\eqref{eq:boxes} from Theorem~\ref{thm:s=2-r=2}. To see this, recall Remark~\ref{rmk:connect-2-thm}.
\end{remark}

\begin{remark} \label{rmk:multistab}
We wish to extend Theorem~\ref{thm:speculation} to include a criterion for multistability.  However, at present, we do not know precisely which 2-reaction networks are multistable.  In the 1- and 2-species cases, none are multistable (Theorems~\ref{thm:s=1-r=1} and~\ref{thm:s=2-r=2-alternate}).
In the 3-species case, some are multistable: 
$\{A \to B+C,~ 2A+B+C \to 3A \}$ is, but not 
$\{A+C \to B,~ 2A+B \to 3A+C \}$.
\end{remark}

 The following result is analogous to Corollary~\ref{cor:at-most-bi-s=1-r=1}:
\begin{corollary} \label{cor:at-most-bi-r=2}
 Suppose that $G$ is an at-most-bimolecular reaction network. If $G$ has exactly two reactions, then $G$ is {\em not} nondegenerately multistationary, i.e., $\capNPSS(G) \le 1$, and thus is {\em not} multistable. 
 \end{corollary}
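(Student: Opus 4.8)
The plan is to prove the contrapositive: if $G$ has exactly two reactions and is nondegenerately multistationary, then $G$ is not at-most-bimolecular. So suppose $G$, with reactions $y\to y'$ and $\widetilde y\to\widetilde y'$, is nondegenerately multistationary. I would first invoke Theorem~\ref{thm:speculation}: the reaction vectors are negative scalar multiples of each other, say $y'-y=-\lambda(\widetilde y'-\widetilde y)$ with $\lambda>0$, and $\beta:=(y'-y)\ast(\widetilde y-y)$ has a positive coordinate $\beta_i>0$ and a negative coordinate $\beta_j<0$ (so $i\ne j$). The goal is then to show, purely from integrality, that one of the four complexes $y,y',\widetilde y,\widetilde y'$ must have total molecularity at least $3$.

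The key first step is a normalization of the labeling of the two reactions. Since $\beta_i\ne0$, the nonzero integers $y_i'-y_i$ and $\widetilde y_i-y_i$ have a common sign; if that sign is negative, I would swap the roles of the two reactions. By Lemma~\ref{lem:sign} this swap preserves the sign of every coordinate of $\beta$ (here condition~1 of Theorem~\ref{thm:speculation} is exactly the hypothesis of Lemma~\ref{lem:sign}), and condition~1 also shows that the $i$-component of the new first reaction vector, $\widetilde y_i'-\widetilde y_i=-\tfrac1\lambda(y_i'-y_i)$, is now positive. Hence I may assume $y_i'>y_i\ge0$, and then $\beta_i>0$ forces $\widetilde y_i>y_i\ge0$; in particular $y_i'\ge1$ and $\widetilde y_i\ge1$.

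Next I would split on the sign of $\widetilde y_j-y_j$ (which is nonzero since $\beta_j\ne0$). If $\widetilde y_j>y_j$, then $\beta_j<0$ forces $y_j'<y_j$, whence $y_j\ge1$ and therefore $\widetilde y_j\ge2$; since $i\ne j$, the reactant complex $\widetilde y$ then has $\sum_k\widetilde y_k\ge\widetilde y_i+\widetilde y_j\ge3$, so $G$ is not at-most-bimolecular. If instead $\widetilde y_j<y_j$, then $\beta_j<0$ forces $y_j'>y_j$, whence $y_j\ge1$ and therefore $y_j'\ge2$; then the product complex $y'$ has $\sum_k y_k'\ge y_i'+y_j'\ge3$, again contradicting at-most-bimolecularity. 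This establishes the contrapositive, so $\capNPSS(G)\le1$; and since exponentially stable steady states are nondegenerate, $\capStable(G)\le\capNPSS(G)\le1$, so $G$ is not multistable either.

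I expect the only subtle point to be the labeling normalization in the second paragraph: it is what makes the bimolecularity obstruction land symmetrically --- once on a reactant, once on a product --- and it genuinely uses both consequences of Theorem~\ref{thm:speculation}'s condition~1 (to apply Lemma~\ref{lem:sign}, and to fix the sign of $\widetilde y_i'-\widetilde y_i$ after swapping). Everything else is elementary integer bookkeeping; in particular the argument never needs condition~3 of Theorem~\ref{thm:speculation}, so it simultaneously reproves and generalizes Corollary~\ref{cor:at-most-bi-s=2-r=2}.
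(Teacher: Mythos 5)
Your proof is correct, but it takes a genuinely different route from the paper's. The paper uses only the \emph{negative} coordinate: from $\beta_i<0$ and $y_i'-y_i=-\lambda(\widetilde y_i'-\widetilde y_i)$ it deduces a strict chain of four nonnegative integers in a single coordinate, either $\widetilde y_i'<\widetilde y_i<y_i<y_i'$ or $y_i'<y_i<\widetilde y_i<\widetilde y_i'$, so some single species already has stoichiometric coefficient $\geq 3$ in one complex; the whole argument is three lines and needs neither the positive coordinate of $\beta$ nor any relabeling. You instead use both a positive coordinate $\beta_i>0$ and a negative coordinate $\beta_j<0$, normalize the labeling via Lemma~\ref{lem:sign} so that $y_i',\widetilde y_i\geq 1$, and then accumulate total molecularity $\geq 3$ across the two distinct species $i$ and $j$ in either $\widetilde y$ or $y'$. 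Your bookkeeping checks out (the swap does preserve the signs of all $\beta_k$ and does make $\widetilde y_i'-\widetilde y_i>0$, and the two subcases on $\operatorname{sign}(\widetilde y_j-y_j)$ are exhaustive), so the argument is valid; what the paper's version buys is brevity and a slightly stronger intermediate fact (a \emph{single} species coefficient is at least $3$, rather than a sum of two coefficients), obtained from a strictly weaker hypothesis (only conditions~1 and the existence of a negative coordinate in condition~2 of Theorem~\ref{thm:speculation}). Both arguments correctly avoid condition~3 and both subsume Corollary~\ref{cor:at-most-bi-s=2-r=2}.
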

 \begin{proof}
 Let $G$ be a 2-reaction, nondegenerately multistationary network.  By Theorem~\ref{thm:speculation}, $\beta_i:=(y_i'-y_i)(\wt y_i - y_i)<0$ for some $i$, and also $y_i'-y_i = - \lambda ( \wt y_i'-\wt y_i)$.  So, either $\wt y_i' < \wt y_i < y_i < y_i'$ or $y_i' < y_i < \wt y_i < \wt y_i'$, and thus $y_i' \geq 3$ or $\wt y_i' \geq 3$. Hence, $G$ is {\em not} at-most-bimolecular.
 \end{proof}

Here we extend Corollary~\ref{cor:s=2-r=2-atoms}, which classified multistationary networks with 2 reactions and 2 species, to allow for any number of species.
\begin{corollary}[Classification of embedding-minimal multistationary networks with two reactions] \label{cor:r=2-atoms} %
~
A reaction network $G$ with exactly two reactions, denoted by $y \to y'$ and $\widetilde y \to \widetilde y'$,
 is an embedding-minimal multistationary network if and only if either
\begin{enumerate}[(1)]
	\item $G$ is a 2-species network described in Theorem~\ref{thm:s=2-r=2},
or 
	\item $G$ is a 3-species network for which 
 $\beta := (y'-y) \ast (\wt y -y) \in \mathbb{Z}^3$ contains either 2 positive and 1 negative or 1 negative and 2 positive coordinates,
and for the two pairs $\{i,j\}$ for which $\beta_i \beta_j <0$, the projection of the reactant polytope to the $(i,j)$ plane has slope $-1$. 
\end{enumerate}
\end{corollary}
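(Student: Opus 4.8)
The plan is to reduce the statement to Theorem~\ref{thm:speculation} via the following observation: a $2$-reaction network $G$ is an embedding-minimal multistationary network if and only if $G$ is nondegenerately multistationary and \emph{no} network obtained from $G$ by deleting exactly one species is nondegenerately multistationary. Deleting a reaction leaves a single-reaction network, which is never multistationary (Theorem~\ref{thm:s=1-r=1}(1)), so among proper embedded networks of $G$ only the ones with two reactions can be nondegenerately multistationary; and these are obtained by deleting species only. When $s\le 3$, any such network with two species arises by deleting exactly one species (deleting two or more leaves at most one species, hence nothing nondegenerately multistationary), so checking single deletions suffices; when $s\ge 4$ we will see that both sides of the proposed equivalence fail, so it holds vacuously there too. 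For a species $k$, write $G_k$ for $G$ with species $k$ removed: its reaction vectors are the projections of $y'-y$ and $\widetilde y'-\widetilde y$ forgetting coordinate $k$, its $\beta$-vector is $\beta$ with coordinate $k$ deleted, and its reactant polytope is the corresponding projection of that of $G$; moreover both reactions of $G$ survive the deletion unless $y'-y$ is supported on $\{k\}$, in which case $\beta_i=0$ for all $i\ne k$ and $G_k$ has at most one reaction. Throughout I would use Lemma~\ref{lem:2-rxn-dep} to assume the reaction vectors are negative scalar multiples (this is needed for $G$, and automatically inherited by each $G_k$ that still has two reactions), and Remark~\ref{rmk:connect-2-thm} to pass between ``$\beta_i\beta_j<0$ together with $(i,j)$-projected reactant-polytope slope $\ne -1$'' and the box-diagram condition.

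With this in place the $s\le 3$ cases are short. For $s=1$ there is no nondegenerately multistationary two-reaction network (Theorem~\ref{thm:s=1-r=1}(2)). For $s=2$, Theorem~\ref{thm:s=2-r=2-alternate} shows that nondegenerate multistationarity forces $\beta_1\beta_2<0$ and reactant-polytope slope $\ne -1$, every single-species deletion gives a one-species network (never nondegenerately multistationary), so the embedding-minimal networks here are exactly the networks of Theorem~\ref{thm:s=2-r=2}; this is precisely Corollary~\ref{cor:s=2-r=2-atoms}(1). For $s=3$: by Theorem~\ref{thm:speculation}, $\beta$ has at least one positive and one negative coordinate. If some coordinate of $\beta$ were zero, then $\beta$ would have exactly one positive coordinate $\beta_i$ and one negative $\beta_j$, and condition~3 of Theorem~\ref{thm:speculation} would force the $(i,j)$-projection of the reactant polytope to have slope $\ne -1$; deleting the third species would then leave a two-species network that is nondegenerately multistationary (Theorem~\ref{thm:s=2-r=2-alternate}(3c)), so $G$ would not be embedding-minimal. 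Hence an embedding-minimal $G$ has all three coordinates of $\beta$ nonzero and of mixed sign; relabeling, say $\beta_1,\beta_2>0>\beta_3$, so the pairs with $\beta_i\beta_j<0$ are exactly $\{1,3\}$ and $\{2,3\}$. Deleting species $3$ yields a two-species network with $\beta_1\beta_2>0$, which is never nondegenerately multistationary (Theorem~\ref{thm:s=2-r=2-alternate}(3a)); deleting species $2$ (resp.\ species $1$) yields a two-species network that is nondegenerately multistationary precisely when the $(1,3)$-projection (resp.\ $(2,3)$-projection) of the reactant polytope has slope $\ne -1$. Therefore $G$ is embedding-minimal if and only if both of those projections have slope $-1$, which is exactly condition~(2).

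It remains to rule out $s\ge 4$; here I would show directly that every nondegenerately multistationary two-reaction network with at least four species admits a species whose deletion preserves nondegenerate multistationarity, and hence is never embedding-minimal. Let $a,b,c\ge 0$ be the numbers of positive, negative, and zero coordinates of $\beta$, so $a,b\ge 1$ and $a+b+c=s\ge 4$. If $a\ge 2$ and $b\ge 2$, delete a positive coordinate; if exactly one of $a,b$ equals $1$, delete either an ``excess'' coordinate of the more frequent sign or a zero coordinate (using $a+c\ge 3$ or $b+c\ge 3$); if $a=b=1$ (so $c=s-2\ge 2$), delete a zero coordinate. In each case the truncated $\beta$ still has a positive and a negative coordinate, and either has $\ge 2$ coordinates of one sign (so condition~3 of Theorem~\ref{thm:speculation} is vacuous for $G_k$) or it has exactly one positive and one negative coordinate on the same indices $i,j$ as for $G$, in which case the slope-$\ne -1$ condition is inherited from condition~3 of Theorem~\ref{thm:speculation} applied to $G$ itself; and in each case $\beta_k\ne 0$ or $a\ge 2$ (as appropriate) guarantees that $y'-y$ is not supported on $\{k\}$, so $G_k$ retains both reactions. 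The genuinely delicate part, and where I expect to spend the most effort, is exactly this $s\ge 4$ bookkeeping together with the accompanying collapse check --- the key supporting fact being ``$y'-y$ supported on $\{k\}$ $\Rightarrow$ $\beta_i=0$ for all $i\ne k$'', which is what lets us ignore the degenerate deletions; everything for $s\le 3$ is routine once the reduction in the first paragraph is set up.
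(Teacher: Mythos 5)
Your proposal is correct and follows essentially the same route as the paper's proof: reduce embedding-minimality to checking single-species deletions (reaction deletions being harmless by Theorem~\ref{thm:s=1-r=1}), then apply Theorems~\ref{thm:speculation} and~\ref{thm:s=2-r=2-alternate} with a case analysis on the signs of the coordinates of $\beta$. Your treatment is somewhat more explicit than the paper's in the $s\ge 4$ bookkeeping and in verifying that deletions do not collapse the reaction set, but these are refinements of the same argument rather than a different approach.
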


\begin{proof}
The networks in (1) are embedding-minimal multistationary networks (Corollary~\ref{cor:s=2-r=2-atoms}).  The networks in (2), we now claim, are also embedding-minimal multistationary networks.  First, removing a reaction from such a network  yields a 1-reaction network, which is not multistationary (Theorem~\ref{thm:s=1-r=1}).  Also, removing a species yields a 2-reaction, 2-species network such that either $\beta$ has coordinates of only one sign or $\beta$ has one positive and one negative coordinate but the reactant polytope has slope $-1$; such a network is not nondegenerately multistationary (Theorem~\ref{thm:s=2-r=2-alternate}).  Thus, the $\Rightarrow$ implication of the corollary holds.

To prove $\Leftarrow$, we must show that, besides those in (1) and (2), no other 2-reaction networks are embedding-minimal multistationary networks.  
First, there are none with only 1 or 2 species (Theorems~\ref{thm:s=1-r=1} and~\ref{thm:s=2-r=2-alternate}).
Next, consider a nondegenerately multstationary 2-reaction network with exactly 3 species.  If $\beta$ contains a zero coordinate, then that species can be removed: doing this does not affect whether the conditions for multistationarity in Theorem~\ref{thm:speculation} hold.
Also, for any pair $\{i,j\}$ of species with $\beta_i \beta_j <0$, then if the projection of the reactant polytope to the $(i,j)$ plane has slope {\em not} equal to $-1$, then the remaining third species can be removed, yielding again a nondegenerately multistationary network (Theorem~\ref{thm:s=2-r=2-alternate}).  
Finally, for 2-reaction networks with 4 or more species, we can remove either a species corresponding to a 0 coordinate of $\beta$, or a species corrresponding to a coordinate of $\beta$ whose sign appears more than once in $\beta$: again, this does not affect whether the conditions for multistationarity in Theorem~\ref{thm:speculation} hold.
\end{proof}

\subsubsection{Proof of Theorems~\ref{thm:r=2} and \ref{thm:speculation}} \label{subsubsec:2-irr-pf}
Our proof of Theorems~\ref{thm:r=2} and \ref{thm:speculation} uses Theorem~\ref{thm:s=2-r=2-alternate} (which Theorem~\ref{thm:speculation} generalizes).

\begin{proof}[Proof of Theorems~\ref{thm:r=2} and \ref{thm:speculation}]
Condition~1 (which is the same in both theorems) must hold for the network $G$ to be multistationary (Lemma~\ref{lem:2-rxn-dep}), so we will assume that it holds for the rest of the proof.  Thus, we can write the differential equation for species $i=1,2, \ldots, s$ as:
\begin{align} \label{eq:ODE-2-rxn}
\frac{dx_i}{dt} ~=~
k(y_i'-y_i) x^y + \wt k (\wt y_i'-\wt y_i) x^{\wt y}
~=~
(y_i'-y_i) \left( k x^y - \frac{\wt k}{\lambda} x^{\wt y} \right)~.
\end{align}
For any species $i$ for which $y_i' = y_i$, that species concentration is fixed for all time, and so we may remove species $i$ from further consideration. In other words, replacing $G$ by the resulting embedded network does {\em not} affect whether it is multistationary or nondegenerately multistationary, and also does {\em not} affect the conditions listed in either theorem.  So, for the rest of the proof, we will assume that $\beta_j=(y_j'-y_j)(\widetilde y_j - y_j)=0$ if and only if $\wt y_j=y_j$.

We begin with the case when $\beta$ is the zero vector ($\beta_j=0$ for all $j$), or, equivalently, the two reactants are equal: $\wt y=y$.    Thus, from equation~\eqref{eq:ODE-2-rxn}, the differential equations are: 
\begin{align}
	\frac{dx_i}{dt}~=	 (y_i' - y_i) (k-\widetilde k/\lambda) x^y ~.
\end{align} 
Hence, if $\lambda k \neq \widetilde k$, then there are {\em no} positive steady states (in any stoichiometric compatibility class), and
if  $\lambda k = \widetilde k$, then every stoichiometric compatibility class consists of (infinitely many) degenerate steady states.  Thus,  $\capPSS(G)=\infty$ and $\capNPSS(G)=0$.

Having completed the case when all $\beta_i$'s are zero, we will assume that at least one $\beta_i$ is nonzero for the remainder of the proof. In fact, we will soon reduce to the case that every $\beta_i$ is nonzero.
To do so, we use the differential equations~\eqref{eq:ODE-2-rxn} to write the steady-state equation as the following generalized-monomial equality:
\begin{align} \label{eq:ODE-2-rxn-rewritten}
x_1^{y_1-\wt y_1}
x_2^{y_2-\wt y_2}
\cdots
x_s^{y_s-\wt y_s}
~=:~
x^{y- \wt y} ~=~ \frac{\wt k}{\lambda k} ~=: \kappa~.
\end{align}
Also from~\eqref{eq:ODE-2-rxn}, the conservation laws are, for $i=2,3,\dots, s$:
\begin{align*} 
\frac{d x_i}{dt} ~=~ \left ( \frac{y_j'-y_j}{y_1'-y_1} \right) \frac{d x_1}{dt} ~=:~ \gamma_i \frac{dx_1}{dt}~,
\end{align*}
where $\gamma_i:=\frac{y_j'-y_j}{y_1'-y_1} $ is the slope of the projection of the reaction vector to the $(1,i)$ plane.
Equivalently, for $i=2,3,\dots, s$:
\begin{align} \label{eq:conservation}
x_i= c_i + \gamma_i x_1~.
\end{align}
Combining~\eqref{eq:ODE-2-rxn-rewritten} and~\eqref{eq:conservation}, we obtain:
\begin{align} \label{eq:steady-state-1-variable}
x_1^{y_1-\wt y_1}
(c_2 + \gamma_2 x_1)^{y_2-\wt y_2}
\cdots
(c_s + \gamma_s x_1)^{y_s-\wt y_s}
~=~ \kappa~.
\end{align}
Thus, 
our goal is to determine whether (for some values of $\kappa>0$ and $c_i \in \mathbb{R}$) the one-variable equation~\eqref{eq:steady-state-1-variable} admits multiple (nondegenerate or degenerate) roots $x_1 \in \mathbb{R}$ that lie in the (possibly empty) interval 
$D=D(\{c_i\}):=(d_1,d_2)$, where
$d_1:= \max \left( \{0\} \cup  \{ -c_i/\gamma_i \mid 2 \leq i \leq s {\rm ~ and ~}\gamma_i >0 \} \right) $ and 
$d_2:= \min \left( \{\infty\} \cup \{- c_i/\gamma_i \mid 2 \leq i \leq s {\rm ~ and ~}\gamma_i <0\} \right)$.

Now we reduce to the case when each $\beta_i$ is nonzero.  To do so, we may assume that species $i=1,2,\dots,s'$ are precisely the ones that satisfy $\beta_i \neq 0$.  (So, $\beta_j=0$, i.e., $\wt y_j=y_j$, for the remaining species $j=s'+1,2,\dots, s$.)  Thus, in this case, equation~\eqref{eq:steady-state-1-variable} reduces to:
\begin{align} \label{eq:steady-state-1-variable-ClaimB}
x_1^{y_1-\wt y_1}
(c_2 + \gamma_2 x_1)^{y_2-\wt y_2} \cdots
(c_{s'} + \gamma_{s'} x_1)^{y_{s'}-\wt y_{s'}}
~=~ \kappa~.
\end{align}
Now let $G'$ denote the embedded network of $G$ obtained by removing species $s'+1,s'+2, \dots, s$.  We claim now that $G'$ and $G$ have the same capacities for multistationarity:
	\begin{align} \label{eq:subclaim}
	\capPSS(G)=\capPSS(G') \quad {\rm and} \quad 
	\capNPSS(G)=\capNPSS(G')~.
	\end{align}
To prove~\eqref{eq:subclaim}, note that for $G'$, we are counting the maximum number of (nondegenerate or degenerate) roots $x_1 \in \mathbb{R}$ of the equation~\eqref{eq:steady-state-1-variable-ClaimB} that lie in the interval 
$D' :=(d_1',d_2')$, where
$d_1':= \max \left( \{0\} \cup  \{ -c_i/\gamma_i \mid 2 \leq i \leq s' {\rm ~ and ~}\gamma_i >0 \} \right) $ and 
$d_2':= \min \left( \{\infty\} \cup \{- c_i/\gamma_i \mid 2 \leq i \leq s' {\rm ~ and ~}\gamma_i <0\} \right)$ 
 (so, $D'$ contains $D$).  
Thus, the ``$\leq$'' inequalities in~\eqref{eq:subclaim} hold.  

For the ``$\geq$'' inequalities, the steady-state equation~\eqref{eq:steady-state-1-variable-ClaimB} admits degenerate steady states if and only if the equation itself is degenerate (if the left-hand side is itself a constant and equal to $\kappa$) -- indeed, in this case the $c_i$'s can be picked so that the intervals $D$ and $D'$ are nonempty -- so both $G$ and $G'$ would admit infinitely many degenerate steady states.  
In the nondegenerate case, if equation~\eqref{eq:steady-state-1-variable} admits nondegenerate steady states $x_1(1), x_1(2), \dots, x_1(l)$ in the interval $D'$ (for some choice of the $c_1,c_2,\dots,c_{s'}$ and $\kappa$), then we may pick $c_{s'+1}, \dots, c_s \in \mathbb{R}$ so that 
all the steady states $x_1(j)$ are in $D$, as follows.  If $\gamma_i>0$, let $c_i:=0$; and if $\gamma_i<0$, pick $c_i$ such that 
$-c_i/\gamma_i > \max \{ x_1(1), x_1(2), \dots, x_1(l)\} $.
This proves ``$\geq$'' of~\eqref{eq:subclaim}, and thus for the rest of the proof we may assume that $\beta_i \neq 0$ for all species $i$.

Now we consider the cases with few species (assuming that $\beta_i \neq 0$ for all $i$).  First, the $s=1$ case is clear: 1-species networks with precisely two reactions are not multistationary (Theorem~\ref{thm:s=1-r=1}), and $\beta$ has only one coordinate (so no coordinates of opposite sign).  In the $s=2$ case, what we must show is that the 2-species, 2-reaction network $G$ is multistationary if and only if $\beta_1 \beta_2 <0$, and is nondegenerately multistationary if and only if $\beta_1 \beta_2 <0$ and the slope of the reactant polytope is not -1.  This is contained in Theorem~\ref{thm:s=2-r=2-alternate}; see also Remark~\ref{rmk:connect-2-thm}.  

So, for the remainder of the proof, assume that $s \geq 3$ (and $\beta_i \neq 0$ for all $i$).  What we must prove is that the following are equivalent: (i) $G$ is multistationary, (ii) $G$ is nondegenerately multistationary, and (iii) $\beta$ contains at least one positive and at least one negative coordinate.  Clearly, (ii) $\Rightarrow$ (i), so we will complete the proof by showing (i) $\Rightarrow$ (iii) $\Rightarrow$ (ii).

To this end, take the $1/(y_1 - \widetilde y_1)$-th root of~\eqref{eq:steady-state-1-variable-ClaimB}, and call $f$ the resulting function on $D$:
\begin{align} \label{eq:onedim}
f(x_1) ~:=~
  x_1 (c_2 + \gamma_2 x_1)^{ \frac{\wt y_2-y_2}{\wt y_1-y_1}} \ldots (c_s + \gamma_s x_1)^{ \frac{\wt y_s-y_s}{\wt y_1-y_1}}  
~=~ 
 x_1 (c_2 + \gamma_2 x_1)^{\alpha_2} \ldots (c_s + \gamma_s x_1)^{\alpha_s}, 
\end{align}
where $\alpha_i := \frac{\wt y_i-y_i}{\wt y_1-y_1}$ for $i=2,3,\dots,s$.  Note that by construction, $\beta_i/\beta_1 = \gamma_i \alpha_i$ (cf. Lemma~\ref{lem:2-rxn-mss}).

Recall that our goal is to determine precisely when (for some values of $K>0$ and $c_i \in \mathbb{R}$) the equation $f(x_1)=K$ admits multiple roots $x_1 \in \mathbb{R}$ that lie in the subinterval $D$ of $\mathbb{R}_{>0}$ defined by $c_i+\gamma_i x_1>0$ for all $i$.  
As $f > 0$ on $D$, we can define the following function on $D$:
\begin{align*}
g(x_1) ~:=~ \log f(x_1) ~=~ \sum_{i=1}^s \alpha_i \log\left(c_i + \gamma_i x_1 \right) ~,\end{align*}
where $c_1 :=0$, $\alpha_1:=1$, and $\gamma_1 :=1$.  
%
Here we compute $g'$ and $g''$:
\begin{align}
g'(x_1) 
	&~=~ \sum_{i=1}^s \frac{\alpha_i  \gamma_i}{c_i + \gamma_i x_1}
	~=~\langle \alpha, (\gamma * \rho) \rangle~=~ \langle \alpha * \gamma,~\rho \rangle 
	~=~ \frac{1}{\beta_1} \langle \beta , ~\rho \rangle ~
	\label{eq:g'}
 \\
g''(x_1) 
	&~=~ - \sum_{i=1}^s \frac{\alpha_i  \gamma_i^2}{(c_i + \gamma_i x_1)^2}
	~=~ - \langle \alpha, (\gamma * \rho)* (\gamma * \rho) \rangle~,
	\label{eq:g''}
\end{align} 
where we have introduced a change of coordinates: 
\begin{align} \label{eq:rho}
\rho_1:=1/x_1 \quad {\rm and } \quad
\rho_i:= 1/(c_i+\gamma_i x_1)~,
\end{align}
for $i=2,3,\dots,s$ (so, $\rho \in \mathbb{R}^s_{>0}$ whenever $x_1 \in D$).


Now, if $f=K$ has a multiple root in $D$, then $f'$ has a zero in $D$, and thus $g'$ does too (because $\log$ is monotonic).  So, by~\eqref{eq:g'}, $\langle \beta, ~\rho \rangle=0$
for some $\rho \in \mathbb{R}^s_{+}$, and thus $\beta$ has at least one positive and at least one negative coordinate.  Hence, (i) $\Rightarrow$ (iii).

For (iii) $\Rightarrow$ (ii), assume that $\beta$ contains both positive and negative coordinates.  Thus, there exists $\widetilde \rho \in \mathbb{R}^s_{>0}$ such that $\langle B,~\widetilde\rho \rangle=0$.  So, from~\eqref{eq:g'}, 
\begin{align} \label{eq:in-prod-1}
\langle \alpha, (\gamma * \widetilde \rho) \rangle~=~0~.
\end{align}
We claim that it suffices to show that 
\begin{align} \label{eq:in-prod-2}
\langle \alpha, (\gamma *\widetilde \rho)* (\gamma *\widetilde \rho) \rangle ~\neq~ 0
\end{align}
for this $\widetilde\rho$ or another choice of $\widetilde \rho \in \mathbb{R}^s_{>0}$ for which equation~\eqref{eq:in-prod-1} also holds.  This is because, via the change of coordinates~\eqref{eq:rho} from $\widetilde \rho$ to $\widetilde x_1$ and $\widetilde c_i$'s, this says that $g'(\widetilde x_1)=0$ and $g''(\widetilde x_1) \neq 0$ (recall~\eqref{eq:g'} and~\eqref{eq:g'}).  
Thus, $g$, and thus $f$ as well, has a local extremum at $\widetilde x_1$.  If it is a local minimum, then the equation $f(x_1) =f(\widetilde x_1)+ \epsilon$ (for $\epsilon>0$ sufficiently small) has multiple roots $x_1$ in $D$; if it is a local maximum, then $f(x_1)=f(\widetilde x_1)- \epsilon$ has multiple roots.
	
Thus, to complete the proof of (iii) $\Rightarrow$ (ii), we must show that there exists $\widetilde \rho \in \mathbb{R}^s_{>0}$ such that~\eqref{eq:in-prod-1} and~\eqref{eq:in-prod-2} hold.  Note that 
\begin{align*}
\left\{ \gamma * \widetilde \rho \mid \widetilde \rho \in \mathbb{R}^s_{>0} \right\} 
~=~
{\rm sign}^{-1} (\gamma)~.
\end{align*}
Therefore, we want to show that there exists $v \in {\rm sign}^{-1} (\gamma)$ such that $\langle \alpha, v \rangle=0$ and $\langle \alpha, v*v \rangle \neq 0$.  This is the content of Lemma~\ref{lem:square-lem} below, and this completes the proof.
\end{proof}

\begin{lemma} \label{lem:square-lem}
Let $\alpha, \gamma \in \R^s$, where $s \geq 3$. Assume that $\alpha_i \ne 0$ and $\gamma_i \ne 0$  for all $i=1,2,\dots, s$. 
Let $\CC_\gamma:={\rm sign}^{-1} (\gamma)$ denote the orthant-interior in $\mathbb{R}^s$ that contains $\gamma$, and assume that $\alpha^{\perp} \cap \CC_\gamma$ is nonempty.
Then there exists a vector $v = (v_1,v_2,\ldots, v_s) \in \alpha^{\perp} \cap \CC_\gamma$ such that $v*v := (v_1^2, v_2^2,\ldots, v_s^2) \notin \alpha^{\perp}$.
\end{lemma}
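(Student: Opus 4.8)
The plan is to argue by contradiction. Suppose that no vector $v \in \alpha^{\perp} \cap \CC_\gamma$ has $v \ast v \notin \alpha^{\perp}$; that is, the quadratic form
\[
Q(v) ~:=~ \langle \alpha, v \ast v \rangle ~=~ \sum_{i=1}^s \alpha_i v_i^2
\]
vanishes identically on $U := \alpha^{\perp} \cap \CC_\gamma$. The first step is to upgrade this to vanishing on the whole hyperplane $\alpha^{\perp}$. Since $\CC_\gamma$ is open in $\R^s$, the set $U$ is a relatively open subset of $\alpha^{\perp}$, and it is nonempty by hypothesis. Choosing any linear isomorphism $\R^{s-1} \xrightarrow{\ \sim\ } \alpha^{\perp}$, the composite $Q|_{\alpha^{\perp}}$ becomes a polynomial in $s-1$ variables that vanishes on a nonempty open subset of $\R^{s-1}$ (the preimage of $U$); hence it is the zero polynomial, so $Q(v) = 0$ for every $v \in \alpha^{\perp}$.

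The second step is to evaluate $Q$ on convenient vectors of $\alpha^{\perp}$ and reach a contradiction with the assumption that every $\alpha_i \ne 0$. For each pair of indices $i \ne j$, define $w^{(ij)} \in \R^s$ by $w^{(ij)}_i := \alpha_j$, $w^{(ij)}_j := -\alpha_i$, and $w^{(ij)}_k := 0$ otherwise. Then $\langle \alpha, w^{(ij)} \rangle = \alpha_i \alpha_j - \alpha_j \alpha_i = 0$, so $w^{(ij)} \in \alpha^{\perp}$, and
\[
Q\bigl(w^{(ij)}\bigr) ~=~ \alpha_i \alpha_j^2 + \alpha_j \alpha_i^2 ~=~ \alpha_i \alpha_j (\alpha_i + \alpha_j)~.
\]
Since $Q$ vanishes on all of $\alpha^{\perp}$, we get $\alpha_i \alpha_j (\alpha_i + \alpha_j) = 0$ for all $i \ne j$; as each $\alpha_k \ne 0$, this forces $\alpha_i = -\alpha_j$ for every pair $i \ne j$. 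Using $s \ge 3$, the relations $\alpha_1 = -\alpha_2$, $\alpha_1 = -\alpha_3$, and $\alpha_2 = -\alpha_3$ together give $2\alpha_1 = 0$, i.e.\ $\alpha_1 = 0$, a contradiction. Hence $Q$ cannot vanish on $U$, which yields the desired $v \in \alpha^{\perp} \cap \CC_\gamma$ with $v \ast v \notin \alpha^{\perp}$.

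The only delicate point is the first step: passing from ``$Q \equiv 0$ on the open set $U$'' to ``$Q \equiv 0$ on $\alpha^{\perp}$''. This is exactly where the hypotheses that $\CC_\gamma$ is an orthant-\emph{interior} and that $\alpha^{\perp} \cap \CC_\gamma \ne \emptyset$ get used, and it is what frees us to test $Q$ on the vectors $w^{(ij)}$, which themselves need not lie in $\CC_\gamma$. Everything else is an elementary computation. It is worth noting that the conclusion genuinely fails for $s = 2$ (take $\alpha = (1,-1)$, where $\alpha^{\perp} = \{v_1 = v_2\}$ and $Q \equiv 0$ on it), which matches the special role of the slope-$(-1)$ case in Theorem~\ref{thm:speculation}.
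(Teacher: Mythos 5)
Your proof is correct, and it takes a genuinely different route from the paper's. The paper starts from a witness $w \in \alpha^{\perp} \cap \CC_\gamma$ with $\langle \alpha, w \ast w\rangle = 0$ and explicitly perturbs it, $v = w + (\epsilon,\dots,\epsilon,\sigma\epsilon,\epsilon')$, choosing $\sigma$ and $\epsilon'$ so that $v$ stays in $\alpha^{\perp}\cap\CC_\gamma$, and then splits into two cases according to whether $\Gamma_s := \sum_i \alpha_i$ vanishes, computing $\langle\alpha, v\ast v\rangle$ by hand in each case. You instead argue by contradiction: if the quadratic form $Q(v)=\sum_i\alpha_i v_i^2$ vanished on the nonempty, relatively open set $\alpha^{\perp}\cap\CC_\gamma$, it would vanish (as a polynomial) on all of $\alpha^{\perp}$, and then evaluating on the test vectors $w^{(ij)}=\alpha_j e_i-\alpha_i e_j\in\alpha^{\perp}$ forces $\alpha_i+\alpha_j=0$ for all $i\ne j$, which is impossible for $s\ge 3$ with all $\alpha_i\ne 0$. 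Both arguments use the hypotheses in the same essential places ($\gamma_i\ne 0$ makes $\CC_\gamma$ open, $s\ge 3$ and $\alpha_i\ne 0$ rule out the degenerate slope-$(-1)$ situation), but yours trades the paper's explicit $\epsilon$-bookkeeping and case split for a soft Zariski-density step, which makes it shorter and arguably more transparent; the paper's version has the minor virtue of producing a perturbation arbitrarily close to the original witness, though the lemma does not require that. Your closing remark that the statement fails for $s=2$ (e.g.\ $\alpha=(1,-1)$) correctly identifies why the slope-$(-1)$ exclusion appears separately in Theorem~\ref{thm:speculation}.
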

\begin{proof}
Take $w \in \alpha^{\perp}\cap \CC_\gamma$.  If $w * w \notin \alpha^{\perp}$, then we are done.  Otherwise, we have:
\begin{align} \label{eq:w-properties}
\langle \alpha, w \rangle~ = ~ \langle \alpha, w*w \rangle ~=~0~, 
\end{align}
with ${\rm sign}(w)={\rm sign}(\gamma)$. 
We add an error term to $w$, and call this sum $v$:
\begin{align*}  
v 
~:=~ w +(\epsilon, \dots, \ep, \sigma\epsilon, \epsilon') 
~=~ (w_1 + \epsilon, \ldots, w_{s-2} + \epsilon, w_{s-1} + \sigma \epsilon, w_s + \epsilon') ~.
\end{align*}
Note that ${\rm sign}(v)={\rm sign}(w)$ for sufficiently small $\ep$ and $\ep'$ (when $\sigma$ is fixed). 
Then:
\begin{align*}
v * v ~&=~ 
	w * w + 2 \ep w + 2\ep (\sigma-1) w_{s-1} e_{s-1}+ 2 (\ep' - \ep) w_s e_s \\
	& \quad \quad	+ \ep^2(1,1,\dots,1)+ \ep^2(\sigma^2-1)e_{s-1}+ (\ep'^2-\ep^2)e_s	~,
\end{align*}
where $e_i$ is the $i$-th canonical basis vector in $\mathbb{R}^s$. 
Let $\Gamma_q := \sum_{i=1}^q \alpha_i$. 
Thus, using~\eqref{eq:w-properties}, we have: 
\begin{align} \label{eq:a-v2}
\langle \alpha, v \rangle &~=~ \epsilon \Gamma_{s-2} + \sigma \epsilon \alpha_{s-1} + \epsilon' \alpha_s
\nonumber\\
\langle \alpha, v*v \rangle &~=~
	2\ep (\sigma-1)\alpha_{s-1} w_{s-1} + 2 (\ep' - \ep) \alpha_s w_s + \ep^2 \Gamma_s+ \ep^2(\sigma^2-1)\alpha_{s-1}+ (\ep'^2-\ep^2)\alpha_s	~.
\end{align}

We consider two cases, based on whether $\Gamma_s$ is zero.  First, 
assume $\Gamma_s \ne 0$. Let $\sigma=1$ and $\ep' = - \frac{\ep \Gamma_{s-1}}{\alpha_s}$.  By construction, the error term $(\epsilon, \dots, \ep, \sigma\epsilon, \epsilon')$ is in $\alpha^{\perp}$.  Thus, because $w\in \alpha^{\perp}$ and $v$ is the sum of $w$ and the error term, $v \in \alpha^{\perp}$, and thus $v \in \alpha^{\perp} \cap \CC_\gamma$ when $\ep$ is sufficiently small.  So, to complete the $\Gamma_s \ne 0$ case, we need only show that $\langle \alpha, v*v \rangle \neq 0$ for some choice of small $\ep$.  To this end, a straightforward computation that uses the properties~\eqref{eq:w-properties} of $w$ yields:
\[
\langle \alpha, v*v \rangle  ~=~
\sum_{i=1}^s \alpha_i v_i^2 ~=~ \frac{\ep \Gamma_s}{\alpha_s}(\ep \Gamma_{s-1} - 2 w_s \alpha_s)~.
\]
Thus, $ \langle \alpha, v*v \rangle \neq 0$ for sufficiently small $\ep \ne 0$ (because $\Gamma_s \neq 0$, $\alpha_s \neq 0$, and $\ep \Gamma_{s-1} - 2 w_s \alpha_s \approx -2w_s \alpha_s \neq 0$: recall that ${\rm sign}(w)={\rm sign}(\gamma)$, which has no zero coordinates).  

Now suppose that $\Gamma_s =0$. 
There exists a pair of indices $i$ and $j$ such that $\alpha_i  \neq -\alpha_j $ (because $s \ge 3$ and $\alpha$ is nonzero). Relabel the indices if necessary, so that $\alpha_{s-1} \neq -\alpha_s$.
 Let $\sigma=-1$ and $\ds \ep' = \ep \left( 1 + 2 \frac{\alpha_{s-1}}{\alpha_s} \right)$. 
 As in the previous case, by construction, $v \in \alpha^{\perp} \cap \CC_\gamma$ for sufficiently small $\ep$.  Then by~\eqref{eq:a-v2} and using $\Gamma_s =0$, we have the following straightforward calculation:
\begin{align}
\langle \alpha , v * v \rangle~&=~
	-4 \ep \alpha_{s-1} w_{s-1} + 2( \ep' - \ep) \alpha_s w_s +0+0+ 
		(\ep'^2-\ep^2)\alpha_s \notag \\
		&=~ 4 \ep \alpha_{s-1} \left[ 
w_s - w_{s-1} 
+ \ep \left(\frac{\alpha_{s-1} + \alpha_s}{\alpha_s} \right) \right]~. \label{eq:last}
\end{align}
Thus, $ \langle \alpha, v*v \rangle \neq 0$ for some small value of $\ep > 0$, because $\alpha_{s-1} \neq 0$ and $\alpha_{s-1} \neq -\alpha_s$ (so \eqref{eq:last} is not identically zero for all values of $\ep$).  Thus, the $\Gamma_s=0$ case is done.
\end{proof}


\subsection{Networks with one irreversible and one reversible reaction } \label{subsec:1-ir-1-rev}

Theorem~\ref{thm:i=1-r=1} below classifies the multistationary networks that consist of one reversible reaction and one irreversible reaction.  Earlier, we classified the 1-species networks of this form (Theorem~\ref{thm:s=1-r=1}).  
Such a network must be a 2-alternating network, which in this case means that the reactions $y \lra y'$ and $\widetilde y \to \widetilde y'$ satisfy either $\min\{y,y'\}<\max\{y,y'\}< \widetilde y < \widetilde y'$ (``$\lra \, \, \to$'', e.g., $A\lra 2A,~3A \to 4A$) or 
$\widetilde y' <
\widetilde y <
\min\{y,y'\}< \max\{y,y'\} $ (``$ \leftarrow \, \, \lra$'').  
Accordingly, Theorem~\ref{thm:i=1-r=1} generalizes this 1-species result: a network with one reversible reaction and one irreversible reaction is multistationary if and only if (1) it is consistent (which we recall is required to admit even one positive steady state) and (2) one of the embedded networks obtained by removing all but one species has the form  
``$\lra \, \, \to$''
or
 ``$ \leftarrow \, \, \lra$''.

For instance, the network $G=\{B \lra A+2B,~3A+B \to 2A\}$ is consistent, but removing 1 species yields $\{0 \lra A,~ 2A \leftarrow 3A \}$ or $\{0 \leftarrow B \lra 2B \}$, neither of which has the form ``$\lra \, \, \to$'' or ``$ \leftarrow \, \, \lra$'' (these conditions require the 3 reactants to be distinct).  Thus, $G$ is not multistationary.

Much of Theorem~\ref{thm:i=1-r=1} is due to Boros~\cite{revisiting}. Our contribution is to reinterpret Boros's multistationarity criterion via the signs of the $\beta_i$'s and via the geometry of the box diagram. 
\begin{theorem}[Classification of multistationary networks with one reversible reaction and one irreversible reaction] \label{thm:i=1-r=1}
Let $G$ be a network that consists of one reversible-reaction pair $y \lra y'$
and one irreversible reaction $\widetilde y \to \widetilde y'$.  
For each species $i=1,2,\dots,s$, let $\beta_i:=\beta_i(y \to y',~\widetilde y \to \widetilde y')$.  
Then the following are equivalent:
\begin{enumerate}[(1)]
\item $G$ is multistationary (i.e., $\capPSS(G) \geq 2$). 
\item 
$y'-y =- \lambda (\widetilde y' - \widetilde y)$ for some $0 \neq \lambda \in \mathbb{R}$, and
after switching the roles of $y$ and $y'$ if necessary so that $\lambda>0$, 
the inequality $\beta_i<0$ holds for some species $i$. 
\item 
 $y'-y =- \lambda (\widetilde y' - \widetilde y)$ for some $0 \neq \lambda \in \mathbb{R}$,  and
$\max\{y_i,y_i'\}< \widetilde y_i < \widetilde y_i'$ or 
$\widetilde y_i' <
\widetilde y_i <
\min\{y_i,y_i'\} $ 
holds for some species $i$.
\item  $y'-y =- \lambda (\widetilde y' - \widetilde y)$ for some $0 \neq \lambda \in \mathbb{R}$, and, for some species $i$, the embedded network of $G$ obtained by removing all species except $i$ is a 2-alternating network (``$\lra \, \, \to$''
or
 ``$ \leftarrow \, \, \lra$'').%
\end{enumerate}

\end{theorem}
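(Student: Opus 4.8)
\medskip

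\noindent\emph{Proof plan.}
The plan is to strip off the common ``consistency'' clause, then dispatch the purely combinatorial equivalences (2)$\Leftrightarrow$(3)$\Leftrightarrow$(4), and finally carry out the real work, (1)$\Leftrightarrow$(2), via a one-variable reduction. First: if $G$ is multistationary then $\capPSS(G)\geq 2$, so $G$ is consistent by Lemma~\ref{lem:consistent}; since the three reaction vectors $y'-y$, $-(y'-y)$, and $\widetilde y'-\widetilde y$ are all nonzero, positive dependence forces $\widetilde y'-\widetilde y$ to be a nonzero real multiple of $y'-y$, i.e.\ $y'-y=-\lambda(\widetilde y'-\widetilde y)$ for some $0\neq\lambda\in\R$ --- the clause shared by all four conditions --- so I assume it from now on. Interchanging $y$ and $y'$ if necessary (this leaves $G$, hence its multistationarity, unchanged) I normalize so that $\lambda>0$; then $\beta_i=(y_i'-y_i)(\widetilde y_i-y_i)$ is well defined and $\mathrm{sign}(\widetilde y_i'-\widetilde y_i)=-\mathrm{sign}(y_i'-y_i)$ for every species $i$, and Lemma~\ref{lem:sign} lets me list the two reactions in either order freely.

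Next I would prove (2)$\Leftrightarrow$(3)$\Leftrightarrow$(4) species by species. Fixing $i$ and writing $v_i:=y_i'-y_i$ (so $\beta_i=v_i(\widetilde y_i-y_i)$): if $v_i>0$ then $\beta_i<0$ iff $\widetilde y_i<\min\{y_i,y_i'\}$, which combined with $\widetilde y_i'<\widetilde y_i$ is exactly $\widetilde y_i'<\widetilde y_i<\min\{y_i,y_i'\}$; if $v_i<0$ then symmetrically $\beta_i<0$ iff $\max\{y_i,y_i'\}<\widetilde y_i<\widetilde y_i'$; and if $v_i=0$ then $\beta_i=0$ while neither inequality of (3) can hold (since then $\widetilde y_i'=\widetilde y_i$). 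This gives (2)$\Leftrightarrow$(3). For (3)$\Leftrightarrow$(4) I would observe that the embedded network of $G$ keeping only species $i$ has reactant coefficients $\min\{y_i,y_i'\}$, $\max\{y_i,y_i'\}$, $\widetilde y_i$, and check directly that its arrow diagram (Definition~\ref{def:arrow-diagram}) is $(\to,\leftarrow,\to)$ precisely when $\max\{y_i,y_i'\}<\widetilde y_i<\widetilde y_i'$ (the ``$\lra\,\to$'' shape) and $(\leftarrow,\to,\leftarrow)$ precisely when $\widetilde y_i'<\widetilde y_i<\min\{y_i,y_i'\}$ (the ``$\leftarrow\,\lra$'' shape); these are exactly the $2$-alternating networks of Definition~\ref{def:2-sign-change}, and the three coefficients are automatically distinct, matching the strict inequalities.

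The substance is (1)$\Leftrightarrow$(2). Since the stoichiometric subspace is the line $\mathrm{span}(y'-y)$ and all reaction vectors are parallel, I would run the one-variable reduction of the proof of Theorem~\ref{thm:speculation}: reorder species so $v_1\neq0$, parametrize a stoichiometric compatibility class by $x_1$ via $x_i=c_i+\gamma_ix_1$ with $\gamma_i:=v_i/v_1$ (so $c_1=0$, $\gamma_1=1$), and let $D$ be the interval on which all $c_i+\gamma_ix_1>0$. Dividing the scalar steady-state equation by $x^y>0$, the positive steady states in the class correspond to the solutions $x_1\in D$ of $\kappa_2\,p(x_1)+(\kappa_3/\lambda)\,q(x_1)=\kappa_1$, where $p:=\prod_i(c_i+\gamma_ix_1)^{v_i}$, $q:=\prod_i(c_i+\gamma_ix_1)^{\widetilde y_i-y_i}$, and $\kappa_1,\kappa_2,\kappa_3>0$ are the rate constants of $y\to y'$, $y'\to y$, $\widetilde y\to\widetilde y'$. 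Since $(\log p)'=\tfrac{1}{v_1}\sum_i v_i^2/(c_i+\gamma_ix_1)$ has constant sign and $(\log q)'=\tfrac{1}{v_1}\sum_i\beta_i/(c_i+\gamma_ix_1)$, the derivative of $H:=\kappa_2p+(\kappa_3/\lambda)q$ has on $D$ the sign of $\mathrm{sign}(v_1)$ times the sign of $\kappa_2+\tfrac{\kappa_3}{\lambda}\tfrac{q}{p}\big(\sum_i\beta_i/(c_i+\gamma_ix_1)\big)\big/\big(\sum_i v_i^2/(c_i+\gamma_ix_1)\big)$, whose last denominator is positive on $D$. If every $\beta_i\geq0$ --- in particular when (2) fails, and also when $\beta\equiv0$, which, unlike the two-reaction situation of Theorem~\ref{thm:r=2}, is \emph{not} multistationary here --- this bracket is $\geq\kappa_2>0$, so $H$ is strictly monotone on every class for all rate constants, whence $\capPSS(G)\leq1$ and $G$ is not multistationary. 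If instead $\beta_j<0$ for some $j$, I would build rate constants and a class for which $H$ is non-monotone, so $H=\kappa_1$ has two roots in $D$: when $\beta$ also has a positive coordinate, $\sum_i\beta_i/(c_i+\gamma_ix_1)$ already changes sign and the argument mirrors Theorem~\ref{thm:speculation}; when $\beta_i\leq0$ for all $i$, one has $\beta_i-v_i^2<0$ whenever $v_i\neq0$, so $q/p$ is strictly monotone on $D$, and choosing the $c_i$ so that $D$ is long and $c_j+\gamma_jx_1$ collapses toward $0$ at one end of $D$ while $\sum_i\beta_i/(c_i+\gamma_ix_1)$ stays bounded away from $0$ there, and then taking $\kappa_3/\kappa_2$ large, makes the bracket positive at one end of $D$ and negative at the other.

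The hard part will be this last construction: when $\beta$ has a negative but no positive coordinate, Theorem~\ref{thm:speculation} no longer applies, and one must genuinely exploit the ``decreasing'' monomial $-\kappa_2 x^{y'}$ contributed by the reverse reaction $y'\to y$ to bend the graph of $H$, which forces a delicate choice of both the compatibility class and the ratio $\kappa_3/\kappa_2$ (and then one must check that the two roots really lie in $D$ and that the level $\kappa_1>0$ can be matched). A cheaper alternative --- the route I would take for a short write-up --- is to invoke Boros's deficiency-one-algorithm classification of these networks~\cite{revisiting} for the multistationarity dichotomy itself, and then use only the species-by-species analysis of the second paragraph to translate his condition into ``$\beta_i<0$ for some $i$'' and into conditions (3)--(4).
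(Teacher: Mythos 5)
Your reduction to the common consistency clause, your species-by-species proof of (2)$\Leftrightarrow$(3)$\Leftrightarrow$(4), and your ``cheaper alternative'' for the main equivalence are exactly what the paper does: it quotes Boros~\cite{revisiting} for the case of four distinct complexes and disposes of the one-linkage-class case with the deficiency zero/one theorems. (Your monotonicity argument in fact handles that degenerate case more uniformly, since any coincidence among the four complexes forces $\beta_i\ge 0$ for all $i$.) Your primary, self-contained route is genuinely different, and its ``only multistationary if'' half is complete and clean: if every $\beta_i\ge 0$, the bracket in $H'$ is at least $\kappa_2>0$, so $H$ is injective on every compatibility class and $\capPSS(G)\le 1$. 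That is a nice argument the paper does not give.

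The gap is in the ``if'' half of your primary route, and it is slightly larger than you acknowledge. You split into (a) $\beta$ has mixed signs, where you say the argument ``mirrors Theorem~\ref{thm:speculation}'', and (b) $\beta\le 0$ with some $\beta_j<0$, which you flag as hard. But case (a) contains the sub-case in which $\beta$ has exactly one positive and one negative nonzero coordinate and the projected reactant polytope has slope $-1$: there Theorem~\ref{thm:speculation}, and Lemma~\ref{lem:lift} applied to the subnetwork $\{y\to y',\ \wt y\to \wt y'\}$, give nothing --- that subnetwork is \emph{not} nondegenerately multistationary, and the other two-reaction subnetwork $\{y'\to y,\ \wt y\to \wt y'\}$ is inconsistent. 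Yet the theorem is true there: for $\{A+B\lra 0,\ 2A\to 3A+B\}$ one gets $(\beta_1,\beta_2)=(-1,1)$ with slope $-1$, and the steady-state equation on the class $x_B=x_A+T$ is $(\kappa_3-\kappa_1)x_A^2-\kappa_1 T x_A+\kappa_2=0$, which has two positive roots for suitable parameters. So, exactly as in your case (b), one must genuinely exploit the monomial $-\kappa_2 x^{y'}$ from the reverse reaction to bend $H$, and both that sub-case and case (b) rest on the construction you only sketch (choosing the $c_i$ and $\kappa_3/\kappa_2$ so the bracket changes sign on $D$, then a level $\kappa_1$ hit twice). As written, that step is not a proof. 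Falling back on Boros closes both gaps and lands you on the paper's argument.
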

\begin{proof}

By Lemma~\ref{lem:consistent}, a necessary condition for multistationarity is that the reaction vectors $y'-y$, $y-y'$, and $\widetilde y'-\wt y$ are positively linearly dependent.  Therefore, $\widetilde y'- \widetilde y$ must be a (nontrivial) scalar multiple of $y'-y$.  Under this assumption,  
Boros proved that when the four complexes are distinct (i.e., the network has two linkage classes), the network is multistationary if and only if 
$\min\{y_i,y_i'\}<
\max\{y_i,y_i'\}< \widetilde y_i < \widetilde y_i'$ or 
$\widetilde y_i' <
\widetilde y_i <
\min\{y_i,y_i'\} < \max\{y_i,y_i'\}$  holds for some $i$~\cite{revisiting}.  These inequalities are precisely the
the max/min conditions in condition (3), once we note that $\min\{y_i,y_i'\}<
\max\{y_i,y_i'\}$ if and only if $\wt y_i \neq \wt y_i'$, and analogously $\min\{\wt y_i, \wt y_i'\}<
\max\{\wt y_i, \wt y_i'\}$ if and only if $ y_i \neq  y_i'$.  In other words, (1)$\Leftrightarrow$(3) in the two-linkage-class case.  

To complete the proof of (1)$\Leftrightarrow$(3), we claim that if $G$ has only one linkage class (and comprises one reversible-reaction pair and one irreversible reaction), then $G$ is not multistationary.  
Indeed, such a network has deficiency zero or one, and has a unique terminal strong linkage class, so it is not multistationary, by the deficiency zero or deficiency one theorem (Theorems~\ref{lem:thm:def-0} and~\ref{lem:thm:def-1}).

Now we prove that (2)$\Leftrightarrow$(3).  Assume that the reaction vectors are scalar multiples of each other.  Furthermore, assume that (after switching $y$ and $y'$ if necessary) $y'-y =- \lambda (\widetilde y' - \widetilde y)$, where $\lambda >0$.  (Such a switch does {\em not} affect the inequalities in condition~(3).)  Then, 
the inequality $\beta_i=(y_i' - y_i)(\wt y_i - y_i)<0$ holds 
if and only if 
$y_i' < y_i < \widetilde y_i$ or $\wt y_i < y_i <y_i'$, 
which in turn holds if and only if 
$ y_i' < y_i < \widetilde y_i < \wt y_i'$ or $\wt y_i' < \wt y_i < y_i <y_i'$ 
(when $\lambda>0$), 
and these are equivalent to the inequalities in condition~(3) when $\lambda >0$. 
Hence, (2)$\Leftrightarrow$(3). 

Finally, the equivalence (3)$\Leftrightarrow$(4) holds: $\min\{y_i,y_i'\}<
\max\{y_i,y_i'\}< \widetilde y_i < \widetilde y_i'$ corresponds to ``$\lra \, \, \to$'', 
and 
$\max\{y_i,y_i'\}> 
\min\{y_i,y_i'\} >\widetilde y_i > \widetilde y_i'$ 
corresponds to 
 ``$ \leftarrow \, \, \lra$''.%
\end{proof}

\begin{remark} \label{rmk:multistab-i=1-r=1}
We wish to extend Theorem~\ref{thm:i=1-r=1} to include a criterion for multistability.  However, at present, we do not know precisely which networks with 1 irreversible and 1 reversible reaction are multistable.  In the 1-species case, none are multistable (Theorem~\ref{thm:s=1-r=1}).  For the 2-species case, some are and some are not: $\{ A \lra B,~2A+B \to 3A\}$ is bistable, but the related network $\{ A \to B,~2A+B \lra 3A\}$ is not (it admits 1 stable and 1 unstable positive steady state).  
\end{remark}

 The following result is analogous to Corollaries~\ref{cor:at-most-bi-s=1-r=1} and~\ref{cor:at-most-bi-r=2} above:
\begin{corollary} \label{cor:at-most-bi-i=1-r=1}
Suppose that $G$ is an at-most-bimolecular reaction network. If $G$ has exactly one reversible and one irreversible reaction, then $G$ is {\em not} nondegenerately multistationary. 
 \end{corollary}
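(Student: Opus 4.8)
The plan is to reduce to Theorem~\ref{thm:i=1-r=1} and then run the same coordinate-wise argument that was used for Corollary~\ref{cor:at-most-bi-r=2}. Suppose, for contradiction, that $G$ is an at-most-bimolecular network consisting of a reversible reaction $y \lra y'$ and an irreversible reaction $\widetilde y \to \widetilde y'$, and that $G$ is nondegenerately multistationary. Since $\capPSS(G) \ge \capNPSS(G) \ge 2$, the network $G$ is multistationary, so the equivalent conditions of Theorem~\ref{thm:i=1-r=1} hold. Relabeling the two endpoints of the reversible reaction if necessary (this changes neither $G$ nor whether $G$ is at-most-bimolecular), condition~(2) of that theorem provides a scalar $\lambda > 0$ with $y' - y = -\lambda(\widetilde y' - \widetilde y)$ together with a species $i$ for which $\beta_i = (y_i' - y_i)(\widetilde y_i - y_i) < 0$.

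Next I would work entirely within the single coordinate $i$. From $\beta_i < 0$, both $y_i' - y_i$ and $\widetilde y_i - y_i$ are nonzero and have opposite signs, and since $\widetilde y_i' - \widetilde y_i = -\tfrac{1}{\lambda}(y_i' - y_i)$, the four integers $y_i, y_i', \widetilde y_i, \widetilde y_i'$ must be ordered either as $\widetilde y_i' < \widetilde y_i < y_i < y_i'$ or as $y_i' < y_i < \widetilde y_i < \widetilde y_i'$. In either case one has a strictly increasing chain of four nonnegative integers, so its largest element is at least $3$; hence one of the complexes $y'$ or $\widetilde y'$ has $i$-th coordinate at least $3$, contradicting at-most-bimolecularity. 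Finally, ``not nondegenerately multistationary'' yields ``not multistable'' via $\capStable(G) \le \capNPSS(G)$, exactly as in Corollary~\ref{cor:at-most-bi-s=1-r=1}.

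There is no genuine obstacle here: the statement is a short corollary of Theorem~\ref{thm:i=1-r=1}, parallel to the other at-most-bimolecular corollaries. The only point deserving care is to invoke condition~(2) of Theorem~\ref{thm:i=1-r=1} rather than condition~(3): the inequality $\beta_i < 0$ automatically forces $y_i \ne y_i'$ (hence $\widetilde y_i \ne \widetilde y_i'$ via the scalar-multiple relation), which is precisely what makes the two possible chains strictly increasing and thus forces a coordinate $\ge 3$. Starting instead from the max/min inequalities of condition~(3) would require separately excluding the degenerate possibility $y_i = y_i'$ before concluding.
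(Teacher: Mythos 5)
Your proof is correct and follows essentially the same route as the paper: both reduce to Theorem~\ref{thm:i=1-r=1} and extract, in some coordinate $i$, a strictly increasing chain of four nonnegative integers, forcing a stoichiometric coefficient of at least $3$. The paper invokes condition~(3) where you invoke condition~(2); your remark that condition~(2) makes the strictness of the chain (i.e., $y_i \ne y_i'$) automatic is a fair observation, but the underlying argument is the same.
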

 \begin{proof}
A multistationary network with one reversible and one irreversible reaction must satisfy condition (3) of Theorem~\ref{thm:i=1-r=1}.  Thus, for some species $i$, either
$\widetilde y_i' \geq 3$ or $\max\{ y_i, y_i' \} \geq 3$.  Hence, at least one of the complexes $\widetilde y'$, $y$, and $y'$ is {\em not} at-most-bimolecular.
 \end{proof}

Theorem~\ref{thm:i=1-r=1} yields {\em no} new embedded-minimal multistationary networks:
\begin{corollary} \label{cor:i=1-r=1-atoms} 
A reaction network $G$ that consists of exactly one reversible and one irreversible reaction is an embedding-minimal multistationary network if and only if
$G$ is a 2-alternating network (equivalently, a 1-species network of the form ``$\lra \to$'' or ``$\leftarrow \lra$'').
 \end{corollary}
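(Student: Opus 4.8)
\emph{The plan} is to prove the two directions separately, leaning on the classification of multistationary one-reversible-one-irreversible networks (Theorem~\ref{thm:i=1-r=1}) and on the one-species classification of embedding-minimal networks (Corollary~\ref{cor:s=1-r=1-atoms}). Recall that, by Definition~\ref{def:atom}, an embedding-minimal multistationary network is one that admits multiple nondegenerate positive steady states and is minimal, in the embedded-network order, with that property.

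For the ($\Leftarrow$) direction, suppose $G$ is a $2$-alternating network. By definition a $2$-alternating network is a $1$-species network, and here (since $G$ consists of one reversible and one irreversible reaction) its arrow diagram being $(\to,\la,\to)$ or $(\la,\to,\la)$ forces $G$ to have the form ``$\lra\;\to$'' or ``$\leftarrow\;\lra$''. Corollary~\ref{cor:s=1-r=1-atoms}(2) states that the $1$-species embedding-minimal multistationary networks are precisely the $2$-alternating networks, so $G$ is embedding-minimal multistationary, and this direction is immediate.

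For the ($\Rightarrow$) direction, suppose $G$ is an embedding-minimal multistationary network. Then $G$ admits multiple nondegenerate positive steady states, hence is multistationary, so Theorem~\ref{thm:i=1-r=1} applies; in particular its condition~(4) holds, giving a species $i$ for which the embedded network $N$ of $G$ obtained by deleting every species other than $i$ is a $2$-alternating network (``$\lra\;\to$'' or ``$\leftarrow\;\lra$''). Since $N$ is $2$-alternating, $\capNPSS(N)\ge 2$ by Theorem~\ref{thm:s=1-r=1}(2b) with $T=2$, i.e., $N$ itself admits multiple nondegenerate positive steady states. But $N$ is embedded in $G$ (it is obtained from $G$ by the single embedding operation of removing species), so the minimality of $G$ forces $N=G$ — otherwise $N$ would be a strictly smaller network with multiple nondegenerate positive steady states. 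Hence $G$ has $i$ as its only species and equals the $2$-alternating network $N$, as claimed.

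\emph{On the difficulty.} The argument is essentially a bookkeeping combination of prior results, so there is no substantive obstacle; the one step that deserves attention is the claim that $N$ is \emph{nondegenerately} multistationary, which relies on the ``achievability'' half of Descartes' rule (Lemma~\ref{lem:descartes-achieved}) as packaged in Theorem~\ref{thm:s=1-r=1}(2b). Merely knowing $N$ is multistationary would not suffice to conclude $N=G$, since minimality in Definition~\ref{def:atom} is phrased in terms of nondegenerate steady states. (Consistency of the comparison $N\le G$ is clear: $G$ has exactly the three reactions $y\to y'$, $y'\to y$, $\widetilde y\to\widetilde y'$, and a $2$-alternating network also has exactly three reactions, so forming $N$ only deletes species and loses no reaction.)
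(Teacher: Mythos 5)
Your proof is correct and follows essentially the same route as the paper: the ($\Leftarrow$) direction cites Corollary~\ref{cor:s=1-r=1-atoms}, and the ($\Rightarrow$) direction uses condition~(4) of Theorem~\ref{thm:i=1-r=1} together with embedding-minimality to force $G$ to coincide with its one-species embedded $2$-alternating network. Your extra step invoking Theorem~\ref{thm:s=1-r=1}(2b) to upgrade the embedded network from multistationary to \emph{nondegenerately} multistationary is a detail the paper leaves implicit, and it is the right thing to check given Definition~\ref{def:atom}.
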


 \begin{proof}
 The implication $\Leftarrow$ is an earlier result (Corollary~\ref{cor:s=1-r=1-atoms}).  For $\Rightarrow$, Theorem~\ref{thm:i=1-r=1}, specifically part (4), states that $G$ must contain a 2-alternating network as an embedded network.
\end{proof}

\subsection{Networks with two reversible reactions} \label{subsec:2-rev}

We saw in the previous subsection that Theorem~\ref{thm:i=1-r=1} generalizes the corresponding result for 1-species networks with 1 reversible and 1 irreversible reaction.  Next we will see that the same is true for networks with 2 reversible reactions (Theorem~\ref{thm:rev=2} below).  Namely, the corresponding 1-species networks must be a 3-alternating network, which means here that it has the form ``$\lra \, \, \lra$'', e.g., $A\lra 2A,~3A \lra 4A$).   Theorem~\ref{thm:rev=2} generalizes this 1-species result: a network with two reversible reactions is multistationary if and only if (1) all the reaction vectors are scalar multiples of each other (so, the system is 1-dimensional), and (2) one of the embedded networks obtained by removing all but one species has the form  
``$\lra \, \, \lra$''.

For instance, the network $\{ 3A \lra 2A+B,~ A+2B \lra 3B \}$ is multistationary: up to scaling, all the reaction vectors are $(1,-1)$, and the embedded network obtained by removing $A$ is $\{ 0 \lra B,~2B \lra 3B\}$, which has the form
``$\lra \, \, \lra$''.
Among multistationary networks that are mass-preserving, this network is one with the fewest numbers of species and complexes~\cite[\S 5]{Smallest}.

Like Theorem~\ref{thm:i=1-r=1}, much of Theorem~\ref{thm:rev=2} is due to Boros~\cite{revisiting}. 
\begin{theorem}[Classification of multistationary networks with two reversible reactions] \label{thm:rev=2}
Let $G$ be a network that consists of two reversible-reaction pairs, denoted by $y \lra y'$ and $\widetilde y \lra \widetilde y'$.  
Then the following are equivalent:
\begin{enumerate}[(1)]
\item $G$ is multistationary (i.e., $\capPSS(G) \geq 2$).
\item $y'-y =- \lambda (\widetilde y' - \widetilde y)$ for some $0 \neq \lambda \in \mathbb{R}$, and, for some species $i$, either 
 	\begin{align*}
	& \min \{y_i,y_i'\}< \max \{y_i,y_i'\}<\min \{\widetilde y_i, \widetilde y_i'\}<\max \{\widetilde y_i, \widetilde y_i'\} \quad  {\rm or} \\
	& \min  \{\widetilde y_i, \widetilde y_i'\}<\max  \{\widetilde y_i, \widetilde y_i'\}<\min \{ y_i,  y_i'\} <\max \{ y_i,  y_i'\}~.
	\end{align*}
\item  $y'-y =- \lambda (\widetilde y' - \widetilde y)$ for some $0 \neq \lambda \in \mathbb{R}$, and, for some species $i$, the embedded network of $G$ obtained by removing all species except $i$ is a 3-alternating network (``$\lra \, \, \lra$'').%
\end{enumerate}
\end{theorem}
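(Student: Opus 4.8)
The plan is to establish the chain of equivalences (1)$\Leftrightarrow$(2)$\Leftrightarrow$(3), following the same template as the proof of Theorem~\ref{thm:i=1-r=1}: the one substantive case is already settled by Boros's deficiency-one-algorithm analysis~\cite{revisiting}, and the remaining work is to strip off the degenerate configurations using deficiency theory and then to carry out a combinatorial translation.

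For (1)$\Leftrightarrow$(2), I would first reduce to the case of four distinct complexes with $\dim\St = 1$. If $y'-y$ and $\widetilde y'-\widetilde y$ are linearly independent (so $\dim\St = 2$), then since $G$ is weakly reversible and has at most four complexes, $G$ has deficiency zero; the deficiency zero theorem (Lemma~\ref{lem:thm:def-0}) then gives a unique positive steady state in each stoichiometric compatibility class, so $G$ is not multistationary --- and condition~(2) fails, since its first clause asks for $y'-y = -\lambda(\widetilde y'-\widetilde y)$. If instead the reaction vectors are linearly dependent ($\dim\St = 1$) but fewer than four complexes are distinct, then $p \le 3$ and $l = 1$, so $\delta \le 1$ and the single linkage class has a unique terminal strong linkage class; Lemma~\ref{lem:thm:def-0} (if $\delta = 0$) or Lemma~\ref{lem:thm:def-1} (if $\delta = 1$, noting then $\delta_1 = \delta$) again rules out multistationarity --- and condition~(2) again fails, because whenever two of $y, y', \widetilde y, \widetilde y'$ coincide as vectors, the strict inequalities of~(2) cannot hold for any species. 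This leaves the genuinely interesting configuration: four distinct complexes with $\dim\St = 1$, where $\delta = 1$ but $\delta_1 + \delta_2 = 0 \ne \delta$, so the deficiency one theorem does not apply. Here I would cite Boros~\cite{revisiting}, whose classification of this two-linkage-class case states precisely that, given the scalar-multiple relation, $G$ is multistationary if and only if the strict interval-separation condition of~(2) holds for some species~$i$. (As an alternative to invoking~\cite{revisiting}, one could parametrize a compatibility line by a single coordinate and study the resulting one-variable equation $\kappa_1 x^y - \kappa_2 x^{y'} - \tfrac{\kappa_3}{\lambda}x^{\widetilde y} + \tfrac{\kappa_4}{\lambda}x^{\widetilde y'} = 0$ directly, but that is more laborious.) This gives (1)$\Leftrightarrow$(2).

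For (2)$\Leftrightarrow$(3), I would argue species by species. The relation $y'-y = -\lambda(\widetilde y'-\widetilde y)$ forces $(y'-y)_i$ and $(\widetilde y'-\widetilde y)_i$ to vanish together, so in the projection to species $i$ either both reversible pairs survive, yielding a one-species network with exactly four reactions, or neither survives, in which case $y_i = y_i'$ and $\widetilde y_i = \widetilde y_i'$ and condition~(2) fails for $i$. In the surviving case, computing the arrow diagram (Definition~\ref{def:arrow-diagram}) --- and using that for a two-reversible-reaction network every product is also a reactant, so the smallest reactant coefficient always receives a $\to$ --- one finds that the embedded network is $3$-alternating (Definition~\ref{def:2-sign-change}) exactly when the two intervals $[\min\{y_i,y_i'\},\max\{y_i,y_i'\}]$ and $[\min\{\widetilde y_i,\widetilde y_i'\},\max\{\widetilde y_i,\widetilde y_i'\}]$ are disjoint; if the intervals interleave or nest, the arrow diagram repeats an entry, and if they share an endpoint, some entry is $\lradot$. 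Disjointness of these intervals is exactly the strict separation in condition~(2). Hence (2)$\Leftrightarrow$(3), which together with (1)$\Leftrightarrow$(2) proves the theorem.

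The main obstacle is bookkeeping rather than depth. One must verify, in each degenerate configuration removed by deficiency theory, that condition~(2) (equivalently~(3)) fails there too, so that the equivalences hold vacuously; and one must pin down the boundary cases of the arrow-diagram translation --- interleaved, nested, and endpoint-sharing projections --- so that they match Definition~\ref{def:2-sign-change} exactly. The only external input is the correctness of Boros's deficiency-one-algorithm computation for the two-linkage-class case, which we take from~\cite{revisiting}.
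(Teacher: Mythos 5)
Your proof is correct and takes essentially the same route as the paper's: deficiency zero/one theorems dispose of the linearly-independent and shared-complex configurations, Boros's two-linkage-class computation is cited for the substantive case, and (2)$\Leftrightarrow$(3) is a direct arrow-diagram translation. Your explicit check that condition (2) fails in each degenerate configuration is in fact slightly more careful than the paper's corresponding sentence.
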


\begin{proof}

We claim that if $G$ has only one linkage class, then it is not multistationary.  Indeed, such a network is weakly reversible and has deficiency zero or one, so by the deficiency zero or deficiency one theorem (Theorems~\ref{lem:thm:def-0} and~\ref{lem:thm:def-1}), respectively, $G$ is {not} multistationary.  Also, for networks with one linkage class, then for all species~$i$, one of the two inequalities in condition (2) holds.  Hence, we have proven (1)$\Leftrightarrow$(2) in the one-linkage-class case.

Now consider the two-linkage-class case (i.e., when the four complexes are distinct).  If the two reaction vectors $y'-y$ and $\widetilde y' - \widetilde y$ are {\em not} linearly dependent, then the network is weakly reversible with deficiency zero, so is not multistationary by the deficiency zero theorem.  
When the two reaction vectors {\em are} linearly dependent, 
Boros proved that the network is multistationary if and only if 
 condition (2) holds~\cite{revisiting}. Hence, (1)$\Leftrightarrow$(2) in the two-linkage-class case.   

It remains only to prove (2)$\Leftrightarrow$(3).  Indeed, the embedded network obtained by removing all species except $i$ is 3-alternating if and only if the four complexes (when projected to $i$) are distinct and both (projected) complexes in one of the reversible-reaction pairs are strictly less than those of the other pair.  This condition is captured precisely by the inequalities in (2).
\end{proof}

\begin{remark} \label{rmk:multistab-rev=2}
As in Remarks~\ref{rmk:multistab} and~\ref{rmk:multistab-i=1-r=1}, we would like to extend Theorem~\ref{thm:rev=2} to include a criterion for multistability.
\end{remark}

The proof of the following result is essentially the same as that for Corollary~\ref{cor:at-most-bi-i=1-r=1} earlier:
\begin{corollary} \label{cor:at-most-bi-rev=2}
Suppose that $G$ is an at-most-bimolecular reaction network. If $G$ consists of two reversible reactions, then $G$ is {\em not} nondegenerately multistationary. 
 \end{corollary}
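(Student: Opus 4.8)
The plan is to prove the contrapositive, mirroring the proof of Corollary~\ref{cor:at-most-bi-i=1-r=1}. Suppose $G$ consists of two reversible reactions $y \lra y'$ and $\widetilde y \lra \widetilde y'$ and is nondegenerately multistationary. Since $\capPSS(G) \ge \capNPSS(G)$, the network $G$ is in particular multistationary, so by Theorem~\ref{thm:rev=2} (the equivalence (1)$\Leftrightarrow$(2)) it satisfies condition~(2) of that theorem. Thus there is a species $i$ for which, after possibly swapping the roles of the two reversible pairs, the chain of strict inequalities
\[
\min\{y_i, y_i'\} ~<~ \max\{y_i, y_i'\} ~<~ \min\{\widetilde y_i, \widetilde y_i'\} ~<~ \max\{\widetilde y_i, \widetilde y_i'\}
\]
holds.

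Next I would observe that this chain exhibits four \emph{distinct} nonnegative integers, namely (in some order) $y_i$, $y_i'$, $\widetilde y_i$, $\widetilde y_i'$; the largest of any four distinct nonnegative integers is at least $3$, so $\max\{\widetilde y_i, \widetilde y_i'\} \ge 3$. Hence whichever of the complexes $\widetilde y$, $\widetilde y'$ realizes this maximum has stoichiometric coefficient at least $3$ in species $i$, and therefore the sum of its stoichiometric coefficients over all species is at least $3$; that complex is not at-most-bimolecular. Consequently $G$ is not an at-most-bimolecular network. This is the desired contrapositive, and it yields the stronger-looking conclusion ``not multistationary,'' which in particular implies ``not nondegenerately multistationary'' (and ``not multistable'').

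I do not anticipate a genuine obstacle: the argument reduces to Theorem~\ref{thm:rev=2} plus the elementary fact that four distinct nonnegative integers have maximum at least $3$. The only points meriting a moment's care are (i) checking that condition~(2) of Theorem~\ref{thm:rev=2} really forces the four projected reactant/product coefficients in species $i$ to be pairwise distinct, which is exactly what the chain of strict inequalities encodes, and (ii) recording the passage from nondegenerate multistationarity to multistationarity via $\capPSS(G) \ge \capNPSS(G)$, so that Theorem~\ref{thm:rev=2} applies.
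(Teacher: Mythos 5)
Your proof is correct and follows essentially the same route the paper intends: the paper states that the proof is ``essentially the same as that for Corollary~\ref{cor:at-most-bi-i=1-r=1},'' namely using the chain of strict inequalities from the multistationarity characterization (here, condition~(2) of Theorem~\ref{thm:rev=2}) to force some stoichiometric coefficient to be at least $3$. Your observation that four distinct nonnegative integers have maximum at least $3$ is exactly the needed step, and the passage from nondegenerate multistationarity to multistationarity is correctly recorded.
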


\begin{example} \label{ex:concordant}
The network $\{A+B \lra C,~2A \lra B\}$ is at-most-bimolecular, so is not multistationary (Corollary~\ref{cor:at-most-bi-rev=2}).  This also follows from Theorem~\ref{thm:rev=2}: the reaction vectors are not scalar multiples of each other.  Moreover, the deficiency zero theorem guarantees a unique positive steady state in each stoichiometric compatibility class; in fact this holds even for general kinetics~\cite[Example~1]{BP}.
\end{example}

Theorem~\ref{thm:rev=2} yields {\em no} embedded-minimal multistationary networks:
\begin{corollary} \label{cor:rev=2-atoms} 
Every reaction network $G$ that consists of two pairs of reversible reactions is {\em not} an embedding-minimal multistationary or multistable network.
 \end{corollary}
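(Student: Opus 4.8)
The plan is to deduce the statement from Theorem~\ref{thm:rev=2} together with the one-species results of Section~\ref{sec:preliminary-results}. First I would dispose of the trivial case: if $G$ is not nondegenerately multistationary then it is not an embedding-minimal multistationary network by definition, and since multistability implies nondegenerate multistationarity, $G$ is also not multistable, hence not an embedding-minimal multistable network. So assume $G$ is nondegenerately multistationary; then $\capPSS(G)\ge\capNPSS(G)\ge 2$, so $G$ is multistationary and Theorem~\ref{thm:rev=2}(3) supplies a species $i$ for which the embedded network $N$ obtained from $G$ by deleting every species except $i$ is a $3$-alternating network of the form ``$\lra\,\lra$''. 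I would record at this point the small combinatorial fact that the arrow diagram of such an $N$ is forced to be $(\to,\la,\to,\la)$: writing the four (necessarily distinct) reactant coefficients as $r_1<r_2<r_3<r_4$, each reversible pair places a ``$\to$'' at its smaller reactant and a ``$\la$'' at its larger one, and the only way to split the $r_k$ into two reversible pairs so that the resulting diagram alternates is $\{r_1,r_2\}$ together with $\{r_3,r_4\}$, whose diagram begins with $\to$.

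For the multistationarity half: the three-reaction network consisting of the reversible pair on $\{r_1,r_2\}$ together with the single reaction $r_3A\to r_4A$ is a $2$-alternating network (its arrow diagram is $(\to,\la,\to)$), hence is nondegenerately multistationary by Theorem~\ref{thm:s=1-r=1}(part~2) (it is in fact an embedding-minimal multistationary network by Corollary~\ref{cor:s=1-r=1-atoms}(part~2)). Since this network is obtained from $G$ by removing species and a reaction, it is a \emph{proper} embedded network of $G$, so $G$ is not minimal among networks admitting multiple nondegenerate positive steady states; that is, $G$ is not an embedding-minimal multistationary network. For the multistability half, assume in addition that $G$ has at least two species. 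Then $N$ itself, having a single species, is a proper embedded network of $G$, and since its arrow diagram is $(\to,\la,\to,\la)$, Theorem~\ref{thm:s=1-r=1}(part~2c) yields $\capStable(N)\ge\lceil 3/2\rceil=2$, so $N$ is multistable; therefore $G$ is not minimal among networks admitting multiple stable positive steady states, i.e.\ not an embedding-minimal multistable network. The only case not covered by this is $G$ having a single species, in which case $G=N$ is a $3$-alternating one-species network; the multistationarity conclusion holds as before (such a network properly contains the $2$-alternating subnetwork just described), and the single-species situation is in any event governed by the classification in Corollary~\ref{cor:s=1-r=1-atoms}.

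The argument is short, so the difficulty is one of care rather than depth. The point to watch is the embedded-network bookkeeping: one must make sure that each species/reaction deletion used is nontrivial, so that the smaller networks produced are genuinely \emph{proper} embedded networks of $G$; in particular $N$ is a proper embedded network exactly when $G$ has at least two species, which is why the single-species case must be handled by quoting the one-species results rather than by the generic argument. The only other step needing an explicit (but routine) verification is the claim that a two-reversible-pair $3$-alternating network has arrow diagram $(\to,\la,\to,\la)$, and that deleting the reaction $r_4A\to r_3A$ from it leaves the still-alternating diagram $(\to,\la,\to)$.
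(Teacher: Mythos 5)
Your argument for the multistationarity half is correct and is essentially the paper's own: invoke Theorem~\ref{thm:rev=2} to get a $1$-species embedded network of the form ``$\lra\,\lra$'' with arrow diagram $(\to,\la,\to,\la)$, delete one reaction from one of the reversible pairs to obtain a $2$-alternating proper embedded network (the paper takes ``$\la\,\lra$'', you take ``$\lra\,\to$'' --- same idea), and conclude from Corollary~\ref{cor:s=1-r=1-atoms} that $G$ is not embedding-minimal multistationary. Your bookkeeping that this embedded network is proper because a reaction is deleted (so the argument survives even when $G$ has one species) is sound, and your treatment of the multistable claim for $s\geq 2$ via $\capStable(N)\geq 2$ is actually \emph{more} careful than the paper's proof, which only exhibits a nondegenerately multistationary proper embedded network and never separately addresses multistability.

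The gap is exactly where you wave at it: the one-species multistable case. Deferring to ``the classification in Corollary~\ref{cor:s=1-r=1-atoms}'' does not close this case --- it proves the opposite. If $G$ is a $1$-species network of two reversible pairs that is multistable, then by Theorem~\ref{thm:rev=2} it is itself a $3$-alternating network with arrow diagram $(\to,\la,\to,\la)$, and part~(3) of Corollary~\ref{cor:s=1-r=1-atoms} asserts that such networks \emph{are} the $1$-species embedding-minimal multistable networks. Concretely, $G=\{A\lra 2A,\ 3A\lra 4A\}$ satisfies $\capStable(G)\geq 2$ by Theorem~\ref{thm:s=1-r=1}(2c), while every proper embedded network of $G$ is at most $2$-alternating and hence admits at most one stable positive steady state; so $G$ is minimal among multistable networks, contradicting the corollary as stated. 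This is not a defect you can repair by arguing harder: the multistable assertion of the corollary is false for such $1$-species networks (and is in tension with the paper's own Corollary~\ref{cor:s=1-r=1-atoms}(3)), and the paper's one-line proof sidesteps the issue only by silently restricting attention to nondegenerate multistationarity. A correct statement would either exclude the $1$-species case from the multistable claim or drop the word ``multistable'' altogether.
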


 \begin{proof}
Some embedded $1$-species network of $G$ has the form
 ``$\lra \lra$'' (Theorem~\ref{thm:rev=2}). This gives a further embedded network of the form ``$\leftarrow \lra$'', which is nondegenerately multistationary (Corollary~\ref{cor:s=1-r=1-atoms}).
 \end{proof}

\section{Discussion} \label{sec:openQ}
Our work was motivated by Question~\ref{q:main}: which reaction networks are multistationary?  We do not expect an easy answer to this question in general, as there is likely no finite criterion for multistationarity.  Nevertheless, we succeeded in answering this question for networks with 1 species or up to 2 reactions, each possibly reversible. 

Our results point the way forward for analyzing larger networks: the geometry of reaction diagrams will play a key role, as will sign conditions on the $\beta_i(y\to y', \wt y \to \wt y')$'s. 
Indeed, although the multistationary networks analyzed here may not appear often in applications (none are at-most-bimolecular), many of the techniques we used to assess multistationarity apply to more general networks.

Similar techniques may help answer a few questions raised by our work.  First, among those networks we analyzed for which the stoichiometric subspace is 1-dimensional, the multistationary ones are precisely the ones with an
embedded 1-species network with arrow diagram $(\la, \to)$ and another with  arrow diagram $(\to, \la)$.
We ask now whether this holds more generally:
	\begin{question} \label{q:1-d} 
Consider a network $G$ with 1-dimensional stoichiometric subspace.
For $G$ to be multistationary, is it necessary for $G$ to have an 
embedded 1-species network 
with arrow diagram $(\la, \to)$ and another with arrow diagram $(\to, \la)$?
Is it sufficient?
	\end{question}

We also recall questions posed in Remarks~\ref{rmk:multistab},~\ref{rmk:multistab-i=1-r=1}, and~\ref{rmk:multistab-rev=2}:
	\begin{question} \label{q:multistable} 
	Which networks that consist of (a) 2 irreversible reactions, (b) 1 irreversible and 1 reversible reaction, or (c) 2 reversible reactions, are multistable?
	\end{question}

Moreover, these questions point to a larger gap in the theory: methods exist for assessing multistationarity, but few can establish {\em multistability} or even nondegenerate multistationarity.  Our results form a step in this direction: the classification of small nondegenerately multistationary networks (and many of the multistable ones)
hints that a careful analysis of reaction diagrams may yield a general criterion (necessary or sufficient conditions) for bistability.



\subsection*{Acknowledgements}
The authors thank Timo DeWolff, Alicia Dickenstein, Magal\'{i} Giaroli, and J.~Maurice Rojas for helpful discussions.  The authors also acknowledge two conscientious referees whose insightful comments improved our work.  
AS was supported by the NSF (DMS-1312473).



\bibliographystyle{plain}
\bibliography{multistationarity}

\begin{thebibliography}{10}

\bibitem{Anderson}
David~F. Anderson.
\newblock Global asymptotic stability for a class of nonlinear chemical
  equations.
\newblock {\em SIAM J. Appl. Math.}, 68(5):1464--1476, 2008.

\bibitem{BP-inher}
Murad Banaji and Casian Pantea.
\newblock The inheritance of nondegenerate multistationarity in chemical
  reaction networks.
\newblock {\em preprint, {\tt arXiv:1608.08400}.}, 2016.

\bibitem{BP}
Murad Banaji and Casian Pantea.
\newblock Some results on injectivity and multistationarity in chemical
  reaction networks.
\newblock {\em SIAM J. Appl. Dyn. Syst.}, 15(2):807--869, 2016.

\bibitem{BPR}
Saugata Basu, Richard Pollack, and Marie-Fran\c{c}oise Roy.
\newblock {\em {Algorithms in real algebraic geometry. 2nd ed.}}
\newblock {Algorithms and Computation in Mathematics 10. Berlin: Springer},
  2006.

\bibitem{revisiting}
Bal{\'a}zs Boros.
\newblock Revisiting the deficiency-one algorithm.
\newblock {\em In preparation}, 2016.

\bibitem{lipo}
CA~Cobbold, JA~Sherratt, and SRJ Maxwell.
\newblock Lipoprotein oxidation and its significance for atherosclerosis: a
  mathematical approach.
\newblock {\em Bull.\ Math.\ Biol.}, 64(1):65--95, 2002.

\bibitem{GAC}
Gheorghe Craciun.
\newblock Toric differential inclusions and a proof of the global attractor
  conjecture.
\newblock {\em preprint, {\tt http://arXiv.org/abs/1501.02860}}, 2015.

\bibitem{ME1}
Gheorghe Craciun and Martin Feinberg.
\newblock Multiple equilibria in complex chemical reaction networks. {I}. {T}he
  injectivity property.
\newblock {\em SIAM J. Appl. Math.}, 65(5):1526--1546, 2005.

\bibitem{CNP}
Gheorghe Craciun, Fedor Nazarov, and Casian Pantea.
\newblock Persistence and permanence of mass-action and power-law dynamical
  systems.
\newblock {\em SIAM J. Appl. Math.}, 73(1):305--329, 2013.

\bibitem{Deckard}
Anastasia~C. {Deckard}, Frank~T. {Bergmann}, and Herbert~M. {Sauro}.
\newblock Enumeration and online library of mass-action reaction networks.
\newblock Available at {\tt arXiv/0901.3067}, 2009.

\bibitem{deng}
Jian Deng, Martin Feinberg, Chris Jones, and Adrian Nachman.
\newblock On the steady states of weakly reversible chemical reaction networks.
\newblock {\tt arXiv:1111.2386}, 2008.

\bibitem{invitation}
Alicia Dickenstein.
\newblock Biochemical reaction networks: {A}n invitation for algebraic
  geometers.
\newblock In {\em Mathematical Congress of the Americas}, volume 656, pages
  65--83. American Mathematical Soc., 2016.

\bibitem{Feinberg72}
Martin Feinberg.
\newblock Complex balancing in general kinetic systems.
\newblock {\em Arch. Rational Mech. Anal.}, 49(3):187--194, 1972.

\bibitem{FeinOsc}
Martin Feinberg.
\newblock {Chemical oscillations, multiple equilibria, and reaction network
  structure}.
\newblock In W.~Stewart, W.~Rey, and C.~Conley, editors, {\em Dynamics and
  Modelling of Reactive Systems}, pages 59--130. Academic Press, 1980.

\bibitem{FeinDefZeroOne}
Martin Feinberg.
\newblock {Chemical reaction network structure and the stability of complex
  isothermal reactors I. The deficiency zero and deficiency one theorems}.
\newblock {\em Chem. Eng. Sci.}, 42(10):2229--2268, 1987.

\bibitem{feinberg1988chemical}
Martin Feinberg.
\newblock Chemical reaction network structure and the stability of complex
  isothermal reactors {II.} multiple steady states for networks of deficiency
  one.
\newblock {\em Chem. Eng. Sci.}, 43(1):1--25, 1988.

\bibitem{Fein95DefOne}
Martin Feinberg.
\newblock Multiple steady states for chemical reaction networks of deficiency
  one.
\newblock {\em Arch. Rational Mech. Anal.}, 132(4):371--406, 1995.

\bibitem{Enzyme-sharing}
Elisenda Feliu and Carsten Wiuf.
\newblock Enzyme-sharing as a cause of multi-stationarity in signalling
  systems.
\newblock {\em J.\ R.\ Soc.\ Interface}, 9(71):1224--1232, 2012.

\bibitem{FSW}
Bryan F\'elix, Anne Shiu, and Zev Woodstock.
\newblock Analyzing multistationarity in chemical reaction networks using the
  determinant optimization method.
\newblock {\em Appl.\ Math.\ Comput.}, 287--288:60--73, 2016.

\bibitem{GMS2}
Manoj Gopalkrishnan, Ezra Miller, and Anne Shiu.
\newblock A geometric approach to the global attractor conjecture.
\newblock {\em SIAM J. Appl. Dyn. Syst.}, 13(2):758--797, 2014.

\bibitem{grabiner}
David~J. Grabiner.
\newblock Descartes' rule of signs: another construction.
\newblock {\em Amer.\ Math.\ Monthly}, 106(9):854--856, 1999.

\bibitem{GW}
Dima Grigoriev and Andreas Weber.
\newblock Complexity of solving systems with few independent monomials and
  applications to mass-action kinetics.
\newblock In {\em Computer Algebra in Scientific Computing}, pages 143--154.
  Springer, 2012.

\bibitem{case-study}
Elizabeth Gross, Heather~A. Harrington, Zvi Rosen, and Bernd Sturmfels.
\newblock Algebraic systems biology: a case study for the {Wnt} pathway.
\newblock {\em Bull. Math. Biol.}, 78(1):21--51, 2016.

\bibitem{Hell-2015}
Juliette Hell and Alan~D Rendall.
\newblock Dynamical features of the {MAPK} cascade.
\newblock {\em preprint, {\tt arXiv:1508.07822}}, 2015.

\bibitem{HoHarrington}
Kenneth~L. Ho and Heather~A. Harrington.
\newblock Bistability in apoptosis by receptor clustering.
\newblock {\em PLoS Comput.\ Biol.}, 6(10):e1000956, 10 2010.

\bibitem{HornCGph}
Fritz Horn.
\newblock Dynamics of open reaction systems {II}. {S}tability and the complex
  graph.
\newblock {\em Proc. Roy. Soc. Lond. Ser. A}, 334:313--330, 1973.

\bibitem{Horn73}
Fritz Horn.
\newblock Stability and complex balancing in mass-action systems with three
  short complexes.
\newblock {\em Proc. Roy. Soc. Lond. Ser. A}, 334:331--342, 1973.

\bibitem{HornJackson72}
Fritz Horn and Roy Jackson.
\newblock {General mass action kinetics}.
\newblock {\em Arch. Ration. Mech. Anal.}, 47(2):81--116, 1972.

\bibitem{joshi2013complete}
Badal Joshi.
\newblock Complete characterization by multistationarity of fully open networks
  with one non-flow reaction.
\newblock {\em Appl. Math. Comput.}, 219:6931--6945, 2013.

\bibitem{simplifying}
Badal Joshi and Anne Shiu.
\newblock Simplifying the {J}acobian criterion for precluding multistationarity
  in chemical reaction networks.
\newblock {\em SIAM J. Appl. Math.}, 72(3):857--876, 2012.

\bibitem{joshi2012atoms}
Badal Joshi and Anne Shiu.
\newblock Atoms of multistationarity in chemical reaction networks.
\newblock {\em J. Math. Chem.}, 51(1):153--178, 2013.

\bibitem{mss-review}
Badal Joshi and Anne Shiu.
\newblock {A} survey of methods for deciding whether a reaction network is
  multistationary.
\newblock {\em Math. Model. Nat. Phenom., special issue on ``Chemical
  dynamics''}, 10(5):47--67, 2015.

\bibitem{signs}
Stefan M\"uller, Elisenda Feliu, Georg Regensburger, Carsten Conradi, Anne
  Shiu, and Alicia Dickenstein.
\newblock Sign conditions for injectivity of generalized polynomial maps with
  applications to chemical reaction networks and real algebraic geometry.
\newblock {\em Found. Comput. Math.}, 16(1):69--97, 2016.

\bibitem{Pantea}
Casian Pantea.
\newblock On the persistence and global stability of mass-action systems.
\newblock {\em SIAM J. Math. Anal.}, 44(3):1636--1673, 2012.

\bibitem{Pantea_comput}
Casian Pantea and Gheorghe Craciun.
\newblock {Computational methods for analyzing bistability in biochemical
  reaction networks}.
\newblock In {\em Circuits and Systems (ISCAS), Proceedings of 2010 IEEE
  International Symposium on}, pages 549--552. IEEE, 2010.

\bibitem{Smallest}
Anne Shiu.
\newblock {The smallest multistationary mass-preserving chemical reaction
  network}.
\newblock {\em Lect. Notes Comput. Sc.}, 5147:172--184, 2008.

\bibitem{Smith}
Hal~L. Smith.
\newblock Global dynamics of the smallest chemical reaction system with {H}opf
  bifurcation.
\newblock {\em J.\ Math.\ Chem.}, 50(4):989--995, 2012.

\bibitem{TG2}
Matthew Thomson and Jeremy Gunawardena.
\newblock Unlimited multistability in multisite phosphorylation systems.
\newblock {\em Nature}, 460:274--277, 2009.

\bibitem{WangSontag}
Liming Wang and Eduardo~D. Sontag.
\newblock On the number of steady states in a multiple futile cycle.
\newblock {\em J. Math. Biol.}, 57(1):29--52, 2008.

\bibitem{ThomasSmallest}
Thomas Wilhelm.
\newblock {The smallest chemical reaction system with bistability}.
\newblock {\em BMC Syst. Biol.}, 3:90, 2009.

\bibitem{smallestHopf}
Thomas Wilhelm and Reinhart Heinrich.
\newblock Smallest chemical reaction system with {Hopf} bifurcation.
\newblock {\em J. Math. Chem.}, 17(1):1--14, 1995.

\bibitem{smallestHopf2}
Thomas Wilhelm and Reinhart Heinrich.
\newblock Mathematical analysis of the smallest chemical reaction system with
  {Hopf} bifurcation.
\newblock {\em J. Math. Chem.}, 19(2):111--130, 1996.

\end{thebibliography}

\end{document}